\definecolor{darkblue}{rgb}{0,0,0.4} 
\newlength{\tempfigdim}
\numberwithin{equation}{section}
\newtheorem{lem}{Lemma}[section]               
\newtheorem{lemma}[lem]{Lemma}               
\newtheorem{cor}[lem]{Corollary}
\newtheorem{corollary}[lem]{Corollary}               
\newtheorem{prop}[lem]{Proposition}
\newtheorem{proposition}[lem]{Proposition}
\theoremstyle{definition}
\newtheorem{definition}[lem]{Definition}
\theoremstyle{remark}     
\newtheorem{rem}{Remark}[section]
\newtheorem{example}[rem]{Example}
\numberwithin{figure}{section}
\newcommand{\Appendix}[1]{\hyperref[app:#1]{Appendix~\ref*{app:#1}}}
\newcommand{\Section}[1]{\hyperref[sec:#1]{Section~\ref*{sec:#1}}}
\newcommand{\Subsection}[1]{\hyperref[subsec:#1]{Subsection~\ref*{subsec:#1}}}
\newcommand{\Lemma}[1]{\hyperref[lem:#1]{Lemma~\ref*{lem:#1}}}
\newcommand{\Theorem}[1]{\hyperref[thm:#1]{Theorem~\ref*{thm:#1}}}
\newcommand{\ThmCor}[1]{\hyperref[thmcor:#1]{Corollary~\ref*{thmcor:#1}}}
\newcommand{\Citethm}[1]{\hyperref[citethm:#1]{Theorem~\ref*{citethm:#1}}}
\newcommand{\Definition}[1]{\hyperref[def:#1]{Definition~\ref*{def:#1}}}
\newcommand{\Remark}[1]{\hyperref[rem:#1]{Remark~\ref*{rem:#1}}}
\newcommand{\Figure}[1]{\hyperref[fig:#1]{Figure~\ref*{fig:#1}}}
\newcommand{\Conjecture}[1]{\hyperref[conj:#1]{Conjecture~\ref*{conj:#1}}}
\newcommand{\Corollary}[1]{\hyperref[cor:#1]{Corollary~\ref*{cor:#1}}}
\newcommand{\Proposition}[1]{\hyperref[prop:#1]{Proposition~\ref*{prop:#1}}}
\newcommand{\Question}[1]{\hyperref[ques:#1]{Question~\ref*{ques:#1}}}
\newcommand{\Example}[1]{\hyperref[exam:#1]{Example~\ref*{exam:#1}}}
\newcommand{\Table}[1]{\hyperref[table:#1]{Table~\ref*{table:#1}}}
\newcommand{\Restric}[1]{\hyperref[restric:#1]{Restriction~\ref*{restric:#1}}}
\newcommand{\Equation}[2][{}]{Equation#1~(\ref{eq:#2})}
\newcommand{\R}{\mathbb{R}}
\newcommand{\Z}{\mathbb{Z}}
\newcommand{\F}{\mathbb{F}}
\newcommand{\C}{\mathbb{C}}
\newcommand{\mc}{\mathcal}
\newcommand{\mf}{\mathfrak}
\newcommand{\bm}{\mathbf}
\newcommand{\wt}{\widetilde}
\newcommand{\ol}{\overline}
\newcommand{\sbs}{\subset}
\newcommand{\sbseq}{\subseteq}
\newcommand{\sm}{\setminus}
\renewcommand{\emptyset}{\varnothing}
\newcommand{\al}{\alpha}
\newcommand{\be}{\beta}
\newcommand{\ga}{\gamma}
\newcommand{\de}{\delta}
\newcommand{\from}{\colon}
\newcommand{\into}{\hookrightarrow}
\newcommand{\onto}{\twoheadrightarrow}
\newcommand{\Zoltan}{Zolt\'{a}n}
\newcommand{\Szabo}{Szab\'{o}}
\newcommand{\Ozsvath}{Ozsv\'{a}th}
\newcommand{\set}[2]{\{#1\mid#2\}}
\renewcommand{\th}{^{\text{th}}}
\DeclareMathOperator{\Sym}{Sym}
\DeclareMathOperator{\Ob}{Ob}
\DeclareMathOperator{\Id}{Id}
\DeclareMathOperator{\Hom}{Hom}
\newcommand{\Kh}{\mathit{Kh}}
\newcommand{\BN}{\mathit{BN}}
\newcommand{\Cx}[1][{}]{\mathcal{C}_{#1}}
\newcommand{\diff}{\delta}
\newcommand{\KhCx}{\mathcal{C}_{\mathit{Kh}}}
\newcommand{\diffKh}{\delta_{\mathit{Kh}}}
\newcommand{\BNCx}{\mathcal{C}_{\mathit{BN}}}
\newcommand{\fBNCx}{\mathcal{C}_{\mathit{fBN}}}
\newcommand{\lBNCx}{\{H\}^{-1}\mathcal{C}_{\mathit{BN}}}
\newcommand{\diffBN}{\delta_{\mathit{BN}}}
\newcommand{\SzCx}{\mathcal{C}_{\mathit{Sz}}}
\newcommand{\fSzCx}{\mathcal{C}_{\mathit{fSz}}}
\newcommand{\diffSz}{\delta_{\mathit{Sz}}}
\newcommand{\OurCx}{\mathcal{C}_{\mathit{tot}}}
\newcommand{\fOurCx}{\mathcal{C}_{\mathit{ftot}}}
\newcommand{\HfOurCx}{\mathcal{C}_{\mathit{fHtot}}}
\newcommand{\HlOurCx}{\{H\}^{-1}\mathcal{C}_{\mathit{tot}}}
\newcommand{\diffOur}{\delta_{\mathit{tot}}}
\newcommand{\Crossings}{\mf{C}}
\newcommand{\Basis}[1]{\mf{B}_{#1}}
\newcommand{\Mirror}[1]{m(#1)}
\newcommand{\Reverse}[1]{r(#1)}
\newcommand{\Dual}[1]{{#1}^*}
\newcommand{\cont}[1]{\mf{#1}}
\newcommand{\enmor}[1]{\bm{#1}}
\newcommand{\dd}{\enmor{d}}
\newcommand{\hh}{\enmor{h}}
\newcommand{\ff}{\enmor{f}}
\newcommand{\fg}{\enmor{g}}
\newcommand{\Commute}[2]{[#1\colon #2]}
\newcommand{\card}[1]{\left\vert{#1}\right\vert}
\newcommand{\gr}{\mathrm{gr}}
\newcommand{\intgr}{\gr_{q}}
\newcommand{\homgr}{\gr_{h}}
\newcommand{\AssRes}[2][D]{{#1}_{#2}}
\newcommand{\ResConfig}[3][D]{{#1}_{#2}^{#3}}
\newcommand{\upright}{\mathcal{U}}
\newcommand*{\defeq}{\mathrel{\vcenter{\baselineskip0.5ex \lineskiplimit0pt
                     \hbox{\scriptsize.}\hbox{\scriptsize.}}}%
                     =}
\newcommand{\Filt}{\mathcal{F}}
\newcommand{\hocat}{\mc{K}}
\newcommand{\hocatpair}{\mc{K}_{\mathrm{p}}}
\newcommand{\Cone}{\mathrm{Cone}}
\newcommand{\tuplethree}[3]{#1,\allowbreak #2,\allowbreak #3}
\begin{document}

\title[A perturbation of the geometric spectral sequence]{A perturbation of the geometric spectral sequence in Khovanov homology}

\author{Sucharit Sarkar}
\thanks{SS was partially supported by NSF CAREER Grant DMS-1350037}
\email{\href{mailto:sucharit@math.princeton.edu}{sucharit@math.princeton.edu}}

\author{Cotton Seed}
\thanks{}
\email{\href{mailto:cseed@math.princeton.edu}{cseed@math.princeton.edu}}

\author{\Zoltan{} \Szabo{}}
\thanks{ZSz was partially supported by NSF Grants DMS-1006006 and DMS-1309152}
\email{\href{mailto:szabo@math.princeton.edu}{szabo@math.princeton.edu}}

\subjclass[2010]{\href{http://www.ams.org/mathscinet/search/mscdoc.html?code=57M25}{57M25}}

\address{Department of Mathematics, Princeton University, Princeton, NJ 08544}
\keywords{}

\date{\today}

\begin{abstract}
  We study the relationship between Bar-Natan's perturbation in
  Khovanov homology and \Szabo's geometric spectral sequence, and
  construct a link invariant that generalizes both into a common
  theory. We study a few properties of the new invariant, and
  introduce a family of $s$-invariants from the new theory in the same
  spirit as Rasmussen's $s$-invariant.
\end{abstract}

\maketitle
%\tableofcontents

%\input{doubleIntro}

\section{Introduction}

In \cite{Kho-kh-categorification} Khovanov categorified the Jones
polynomial to construct the first link homology theory, usually known
as the Khovanov homology $\Kh(L)$ of links $L\subset S^3$. Several
variants and improvements were then constructed---the reduced version
\cite{Kho-kh-patterns}, Lee's perturbation \cite{Lee-kh-endomorphism},
Bar-Natan's tangle invariant \cite{Bar-kh-tangle-cob} (yielding a
Bar-Natan perturbation similar to Lee's), Khovanov's tangle invariant
\cite{Kho-kh-tangles}, functoriality for link cobordisms in
$\R^3\times I$ \cite{Jac-kh-cobordisms, Bar-kh-tangle-cob,
  Kho-kh-cob}, Khovanov-Rozansky's $\mathfrak{sl}(n)$-homology and
HOMFLYPT homology \cite{KR-kh-matrixfactorizations,
  KR-kh-matrixfactorizations2}, \Ozsvath-Rasmussen-\Szabo's odd
Khovanov homology \cite{OSzR-kh-oddkhovanov}, Seidel-Smith's
symplectic Khovanov homology \cite{SS-kh-symplectic} equipped with
localization spectral sequences \cite{SS-kh-involution}---to name a
few.  In \cite{Ras-kh-slice} Rasmussen used Lee's perturbation to
construct a numerical invariant $s(K)$ leading to the first
combinatorial proof of a theorem due to Kronheimer-Mrowka
\cite{KM-gauge-embeddedsurfaces1} on the four-ball genus of torus
knots.  Khovanov homology was also used by Ng to construct a bound on
the Thurston-Bennequin number of Legendrian links
\cite{Ng-kh-tb-bound}.  In \cite{OSz-hf-spectral} \Ozsvath-\Szabo{}
constructed a spectral sequence from the reduced version of the
Khovanov homology of the mirror a link to the Heegaard Floer homology
of its branched double cover. A similar spectral sequence was
constructed by Bloom \cite{Blo-gauge-spectralsequence}, but abutting
to the monopole Floer homology of the branched double cover.
Motivated by these spectral sequences, \Szabo{} constructed the
geometric spectral sequence \cite{Szab-kh-geometric}, a combinatorial
construction that shares many formal properties with its holomorphic
geometry and gauge theory counterparts. On a slightly different note,
Kronheimer-Mrowka constructed a spectral sequence from Khovanov
homology to the instanton knot Floer homology \cite{KM-kh-spectral},
which in turn established that Khovanov homology detects the unknot.

In our present paper, we concern ourselves with the interplay between
the Bar-Natan perturbation in Khovanov homology, as introduced in
\cite{Bar-kh-tangle-cob} and studied further in
\cite{Nao-kh-universal,Turner-kh-BNSeq}, and the geometric spectral
sequence constructed by \Szabo{} in \cite{Szab-kh-geometric}.  We
present a brief survey of the existing constructions
in \Section{background}. We produce our new endomorphism and prove
that it is a differential in \Section{construction}. We
devote \Section{invariance} to proving invariance
and \Section{properties} to studying a few properties of the new
invariant.  Finally in \Section{new-s} we introduce a family of
$s$-invariants, each mimicking Rasmussen's $s$-invariant.  We
summarize our construction, state a few of its salient features, and
discuss a bit on the motivation.

\subsection*{Construction} (\Definition{main-complex} and
\Proposition{is-differential}) We construct a
$(\homgr,\intgr)$-bigraded chain complex $\OurCx$ over $\F_2[H,W]$,
with $H$ and $W$ in bigradings $(0,-2)$ and $(-1,-2)$ and the
differential in bigrading $(1,0)$. The chain group is freely generated
(over $\F_2[H,W]$) by the Khovanov generators coming from some link
diagram. Setting $W=0$ recovers the Bar-Natan theory, while setting
$H=0$ recovers \Szabo's geometric spectral sequence. For link diagrams
equipped with a single basepoint, one can construct two reduced
versions, the minus version $\OurCx^-$ as a subcomplex and the plus
version $\OurCx^+$ as the corresponding quotient complex.

\subsection*{Properties} We list a few key properties of these new
invariants. 
\begin{itemize}[leftmargin=*]
\item (\Proposition{main-invariance} and \Corollary{easy-invariance})
  The chain homotopy type of $\OurCx$ over $\F_2[H,W]$ is a link
  invariant, while the chain homotopy type of the reduced versions
  (along with the maps to and from the unreduced theory) are
  invariants of links equipped with a single basepoint.
\item (\Proposition{total-rank}) For an $l$-component link, the
  homology of the localized version
  $\HlOurCx=\OurCx\otimes_{\F_2[H,W]}\F_2[H,H^{-1},W]$ is $2^l$ copies
  of $\F_2[H,H^{-1},W]$, with the copies corresponding to the $2^l$
  possible orientations of $L$.
\item (\Proposition{lower-bound}) Half the absolute value of each of
  the new $s$-invariants from \Definition{new-s-invariants} is a lower
  bound for the four-ball genus of knots.
\end{itemize}

\subsection*{Motivation} One of the main motivations behind this
construction is the Seidel-Smith symplectic Khovanov homology
\cite{SS-kh-symplectic}, which is conjecturally isomorphic to Khovanov
homology (the conjecture has been verified over characteristic
$0$~\cite{AS-kh-Khovanov-Floer}, although it remains open over other
characteristics, in particular, over characteristic $2$). To expound a
bit, after viewing the link $L$ as a plat closure of a $2n$-braid, and
letting $p\from\C\to\C$ denote the degree-$2n$ polynomial whose zeroes
are the $2n$ points of the braid, Seidel and Smith consider the
complex variety
\[
C\defeq\set{(u,v,z)\in\C^3}{u^2+v^2=p(z)}
\]
and define symplectic Khovanov homology to be the Lagrangian Floer
homology in the $n\th$ symmetric product $\Sym^n(C)$ (or rather, in a
certain open subset $\mathcal{Y}$ of the Hilbert scheme
$\mathrm{Hilb}^n(C)$ which is a smooth resolution of $\Sym^n(C)$, see
\cite{Man-kh-nilpotent} for more details), with each Lagrangian being
a product of $n$ disjoint spheres in $C$; one of the Lagrangians is
standard, coming from the $n$ plats at either end, while the other
comes from applying the braid group action.

There is a $\Z/2$-action induced by
$(u,v)\stackrel{\tau}{\to}(u,-v)$, and its fixed points can be
identified with the complex curve
\[
C^{\tau}\defeq\set{(u,z)\in\C^2}{u^2=p(z)}
\]
which, along with the $\al$ and $\be$ curves (which are the fixed
points of the spheres on $C$), is easily seen to be a Heegaard diagram
(in the sense of \cite{OSz-hf-3manifolds}) for the double branched
cover of $L$; and the fixed points of the Lagrangians can be
identified with the Lagrangian tori from Heegaard Floer homology. The
fixed points $\mc{Y}^{\tau}$ may be viewed as the complement of a
certain divisor in $\Sym^n(C^{\tau})$; therefore, there is a spectral
sequence relating the Floer homology in $\Sym^n(C^{\tau})$ and
$\mc{Y}^{\tau}$, which is conjecturally trivial.  The Seidel-Smith
localization theorem \cite{SS-kh-involution} produces a chain complex
over $H^*(B\Z/2;\F_2)=\F_2[w_1]$ inducing a spectral sequence relating
the symplectic Khovanov homology (which is conjecturally isomorphic to
Khovanov homology) and Lagrangian Floer homology in $\mc{Y}^{\tau}$
(which is conjecturally isomorphic to Heegaard Floer homology, namely
Lagrangian Floer homology in $\Sym^n(C^{\tau})$); and the spectral
sequence conjecturally equals the \Ozsvath-\Szabo{} spectral sequence
and the \Szabo{} geometric spectral sequence.

On the other hand, we may also consider the $S^1$-action
$(u,v)\stackrel{\eta_{e^{i\theta}}}{\longrightarrow}(u\cos\theta-v\sin\theta,u\sin\theta+v\cos\theta)$,
  whose fixed points are the $2n$ zeroes of $p$
\[
C^{\eta}\defeq\set{z\in\C}{p(z)=0}.
\]
The fixed points $\mc{Y}^{\eta}$ is the subspace of $\Sym^n(C^{\eta})$
consisting of distinct points.  Furthermore, each Lagrangian is a
product of $n$ zero-spheres in $C^{\eta}$; one corresponds to the
matching of the $2n$ points induced by the plats, while the other is
the matching gotten by applying the braid group action.  It is easily
seen that the Lagrangians intersect in $2^l$ points, where $l$ is the
number of link components of $L$, and therefore the Lagrangian Floer
homology in the discrete space $\mc{Y}^{\eta}$ is $2^l$-dimensional.
If there were a Seidel-Smith localization theorem for $S^1$-actions,
it would have produced a chain complex over $H^*(BS^1;\Z)=\Z[c_1]$
inducing a spectral sequence from the symplectic Khovanov homology
(over $\Z$) to $\Z^{2^l}$, and it is conceivable that this spectral
sequence would equal the Bar-Natan spectral sequence.

The two actions described above actually combine to induce an $O(2)$
action. Therefore, one might expect that the above two theories can be
combined into a single theory: over $\F_2$, we should expect a chain
complex over $\F_2[w_1,w_2]\cong H^*(BO(2);\F_2)$, so that the
specialization $w_2=0$ produces the \Szabo{} chain complex while the
specialization $w_1=0$ produces the Bar-Natan chain complex. In this
paper, we construct such a theory where $W$ plays the role of $w_1$
and $H$ plays the role of $w_2$. The gradings also work out:
symplectic Khovanov homology carries a single grading, which is
conjectured to equal $\homgr-\intgr$ on the Khovanov side; the
universal Stiefel-Whitney classes $w_1$ and $w_2$ live in gradings $1$
and $2$, respectively, while the formal variables $W$ and $H$ live in
$(\homgr,\intgr)$-bigradings $(-1,-2)$ and $(0,-2)$, respectively.

\subsection*{Acknowledgment}
We thank Kristen Hendricks, Robert Lipshitz, and Ciprian Manolescu for
several illuminating conversations about the Seidel-Smith
construction, in particular for pointing out the $O(2)$-action,
% including the entirety of the motivation,
Alexander Shumakovitch for some invaluable help during computations,
and the referees for pointing out some mistakes and for several other
helpful comments.

\section{Background}\label{sec:background}
Before we define our complex, let us first review the Khovanov chain
complex, the Bar-Natan theory, and the \Szabo{} geometric spectral
sequence. All the three existing variants and our new fourth variant
are defined in similar settings, particularly if we are working over
$\F_2$, which we are. The relevant aspects are presented below in an
enumerated list and a few definitions.
\captionsetup[subfloat]{width={0.22\textwidth}}
\begin{figure}
\centering
\subfloat[A crossing $c$.]{\label{fig:res-crossing}\includegraphics[width=0.22\textwidth]{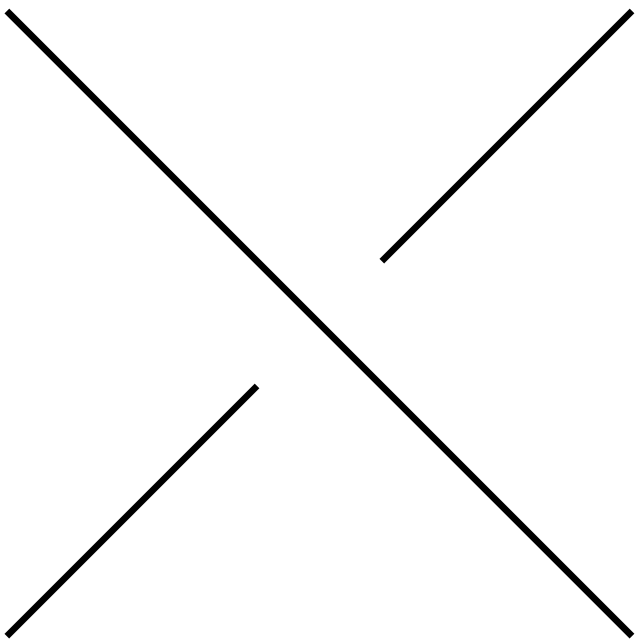}}
\hspace{0.025\textwidth}
\subfloat[The $0$-resolution at $c$.]{\label{fig:res-0}\includegraphics[width=0.22\textwidth]{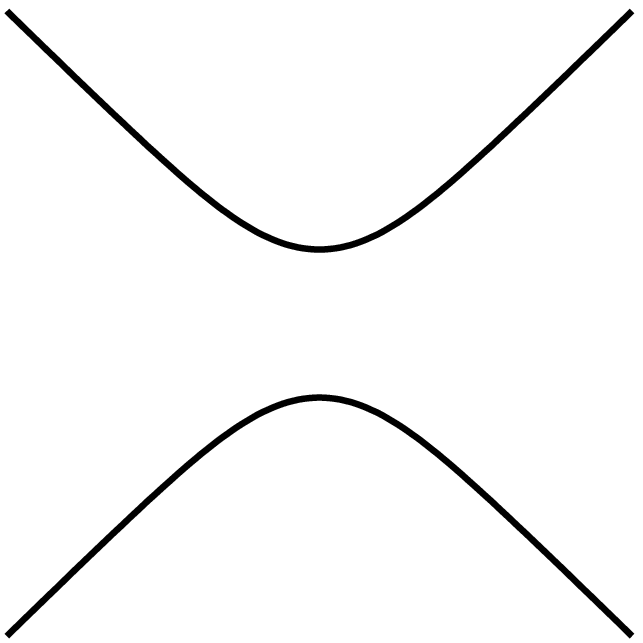}}
\hspace{0.025\textwidth}
\subfloat[The $1$-resolution at $c$.]{\label{fig:res-1}\includegraphics[width=0.22\textwidth]{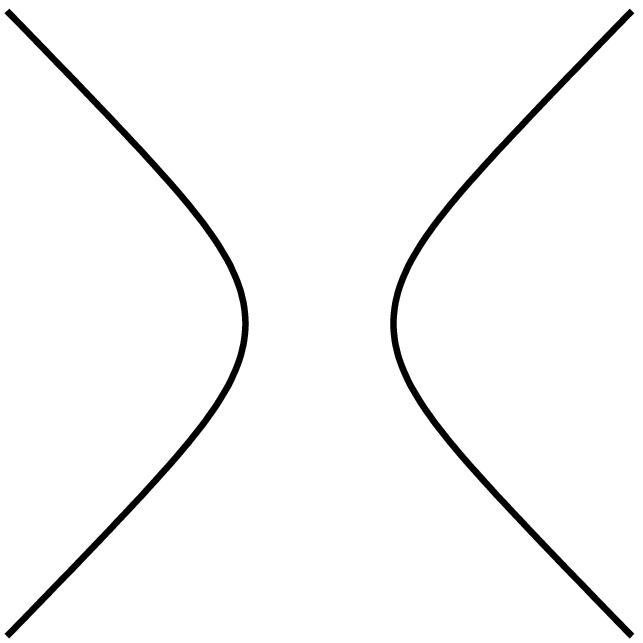}}
\hspace{0.025\textwidth}
\subfloat[The $0$-resolution at $c$ along with the surgery arc $\al_c$.]{\label{fig:res-0-al}\begin{overpic}[width=0.22\textwidth]{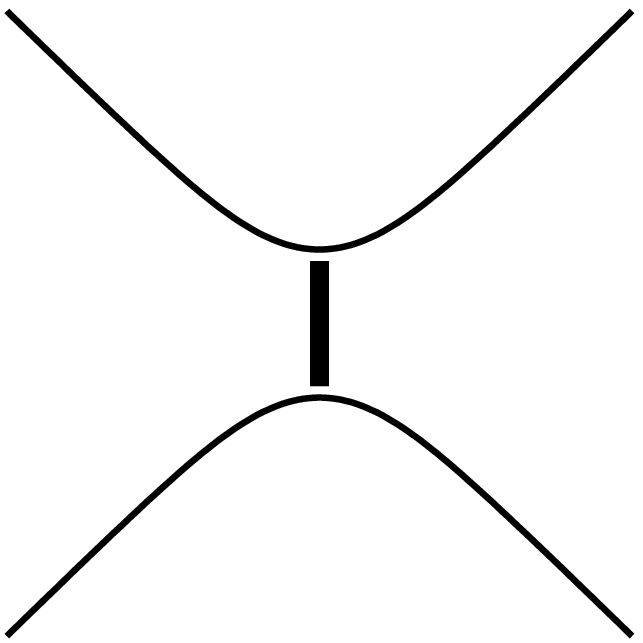}\put(52,48){\tiny$\al_c$}\end{overpic}}
\caption{A crossing, its $0$-resolution, its $1$-resolution, and its
  $0$-resolution along with the surgery arc. If we forget the surgery
  arc, we get the $0$-resolution; and if we perform embedded surgery
  along it, we get the $1$-resolution.}\label{fig:0-1}
\end{figure}
\begin{enumerate}[label=(X-\arabic*), ref=(X-\arabic*)]
\item $L\sbs \R^3$ is a link represented by an $n$-crossing link
  diagram $D\sbs\R^2=\R^2\times\{0\}\sbs \R^3$; let $n_+$
  (respectively, $n_-$) denote the number of positive (respectively,
  negative) crossings in $D$. Sometimes we work with a pointed link,
  that is, a link along with a single basepoint on it, and we
  represent it by a pointed link diagram $(D,p)$ where $p$ is some
  basepoint on $D$.
\item Let $\Crossings$ be the set of crossings in $D$. For any subset
  $u\sbseq\Crossings$, let $\AssRes{u}$ be the following complete
  resolution of $D$: resolve a crossing $c$ by the $0$-resolution if
  $c\notin u$, and resolve it by the $1$-resolution otherwise, see
  Figure~\ref{fig:res-crossing}--\ref{fig:res-1}; let $Z(\AssRes{u})$
  denote the set of circles in $\AssRes{u}$.
\item\label{item:Khovanov-generators} The Khovanov chain group
  $\Cx[u]$ at $u$ is the exterior algebra generated by the circles in
  $\AssRes{u}$; i.e., if $Z(\AssRes{u})=\{x_1,\dots,x_k\}$, then
  $\Cx[u]=\wedge^*\F_2\langle x_1,\dots,x_k\rangle$; in particular,
  $\Cx[u]$ has a distinguished basis $\Basis{u}$ consisting of the
  square-free monomials in $x_i$. If $(D,p)$ is a pointed link
  diagram, let $\Basis{u}^-\sbs\Basis{u}$ be the subset consisting of
  the monomials where the circle containing $p$ appears; and let
  $\Basis{u}^+=\Basis{u}\sm\Basis{u}^-$. The minus and the plus
  Khovanov chain groups, $\Cx[u]^-$ and $\Cx[u]^+$, are the
  $\F_2$-vector subspaces of $\Cx[u]$ generated by $\Basis{u}^-$ and
  $\Basis{u}^+$, respectively.
\item\label{item:Kh-hom-grading} A Khovanov generator is a pair
  $(u,x)$ where $u\subseteq\Crossings$ and $x\in\Basis{u}$.  The
  Khovanov generators are endowed with a bigrading $(\gr_h,\gr_q)$;
  the homological grading $\gr_h$ and a quantum grading $\gr_q$ are
  given by
  \begin{align*}
    \gr_h((u,x))&=-n_-+\card{u}\\
    \gr_q((u,x))&=n_+-2n_-+\card{u}+\card{Z(\AssRes{u})}-2\deg(x).
  \end{align*}
  The so-called delta grading $\gr_{\diff}$ is defined as
  $\gr_h-\gr_q/2$.  The set of all Khovanov generators forms a basis
  for the total $(\gr_h,\gr_q)$-bigraded Khovanov chain group
  $\Cx=\displaystyle\mathop\oplus_{u\subseteq\Crossings}\Cx[u]$.
\item When working with a pointed link diagram, a minus (respectively,
  plus) Khovanov generator is a pair $(u,x)$ where $u\subseteq\Crossings$
  and $x\in\Basis{u}^-$ (respectively, $x\in\Basis{u}^+$).  The set of
  all minus (respectively, plus) Khovanov generators forms a basis for
  the total minus (respectively, plus) Khovanov chain group
  $\Cx^-=\displaystyle\mathop\oplus_{u\subseteq\Crossings}\Cx[u]^-$
  (respectively,
  $\Cx^+=\displaystyle\mathop\oplus_{u\subseteq\Crossings}\Cx[u]^+$). The
  $\gr_q$-gradings are shifted by $-1$ (respectively, $1$) for the
  minus (respectively, plus) theory; that is, $\Cx^-$ is a subspace of
  $\Cx{}\{-1\}$, while $\Cx^+$ is a subspace of $\Cx{}\{1\}$ where
  $\{\,\}$ denotes the grading shift operator for the second grading
  (with the usual convention $C\{a\}^{i,j}\cong C^{i,j+a}$ for any
  bigraded $\F_2$-vector space $C$, any bigrading $(i,j)$, and any
  shift $a$ for the second grading).
\end{enumerate}

\begin{definition}
  A \emph{resolution configuration} $R$ consists of a set $Z(R)$ of
  smoothly embedded disjoint circles in $\R^2$ and a set $A(R)$ of
  properly embedded disjoint arcs in $(\R^2,Z(R))$.
  \begin{enumerate}
  \item The number of arcs in $A(R)$ is called the index of the
    resolution configuration.
  \item If the arcs in $A(R)$ are all oriented, then $R$ is called an
    oriented resolution configuration.
  \item Two (oriented) resolution configurations are equivalent if
    there is an isotopy, or equivalently, an orientation-preserving
    diffeomorphism of $S^2=\R^2\cup\{\infty\}$, that carries one to
    the other.
  \item The mirror $\Mirror{R}$ of a resolution configuration $R$ is
    obtained from $R$ by reflecting it along the line $\{0\}\times\R$.
  \item The reverse $\Reverse{R}$ of an oriented resolution
    configuration $R$ is obtained from $R$ by reversing the
    orientation of all the arcs in $A(R)$.
  \item The dual of an index-$k$ (oriented) resolution configuration
    $R$ is another index-$k$ (oriented) resolution configuration
    $\Dual{R}$, so that the circles in $Z(\Dual{R})$ are obtained from
    the circles in $Z(R)$ by performing embedded surgeries along the
    arcs in $A(R)$, and the arcs in $A(\Dual{R})$ are obtained by
    rotating the arcs in $A(R)$ by $90^{\circ}$ counter-clockwise.
  \item The circles in $Z(R)$ are called the starting circles of $R$,
    and the circles in $Z(\Dual{R})$ are called the ending circles of
    $R$.
  \item The circles in $Z(R)$ that are disjoint from all the arcs in
    $A(R)$ are called the passive circles; the rest of the circles are
    called the active circles. Note, the passive circles of $R$ are in
    natural correspondence with the passive circles of $\Dual{R}$.
  \item A labeled resolution configuration $(R,x,y)$ is a resolution
    configuration $R$ along with a square-free monomial $x$ in the
    starting circles $\{x_i\}$ of $R$ and a square-free monomial $y$
    in the ending circles $\{y_i\}$ of $R$. When we draw a labeled
    resolution configuration, we label the starting circles $0$ or
    $1$: a starting circle appears in the monomial $x$ if it is
    labeled $1$, and does not appear in $x$ if it is labeled $0$; and
    we draw the ending circles solid red or dashed blue: an ending
    circle appears in the monomial $y$ if it is solid and colored red,
    and does not appear in $y$ if it is dashed and colored blue.
  \item The dual $(\Dual{R},\Dual{y},\Dual{x})$ of a labeled
    resolution configuration $(R,x,y)$ is defined as follows: a
    starting circle of $\Dual{R}$ appears in the monomial $\Dual{y}$
    if and only if the corresponding ending circle of $R$ does not
    appear in $y$; and an ending circle of $\Dual{R}$ appears in the
    monomial $\Dual{x}$ if and only if the corresponding starting
    circle of $R$ does not appear in $x$. See \Figure{resconfig-dual}
    for an oriented labeled resolution configuration and its dual.
  \end{enumerate}
\end{definition}
\captionsetup[subfloat]{width={0.4\textwidth}}
\begin{figure}
  \centering \subfloat[An oriented labeled resolution configuration
  $(R,x,y)$.]{\begin{overpic}[width=0.4\textwidth]{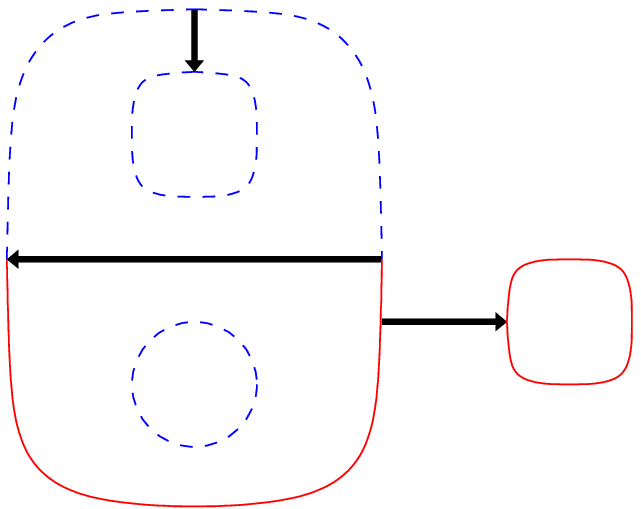}\put(89,22){\tiny
        $1$}\put(29,12){\tiny $0$}\put(7,7){\tiny
        $0$}\put(29,52){\tiny $0$}\end{overpic}}
  \hspace{0.1\textwidth} \subfloat[The dual oriented labeled
  resolution configuration
  $(\Dual{R},\Dual{y},\Dual{x})$.]{\begin{overpic}[width=0.4\textwidth]{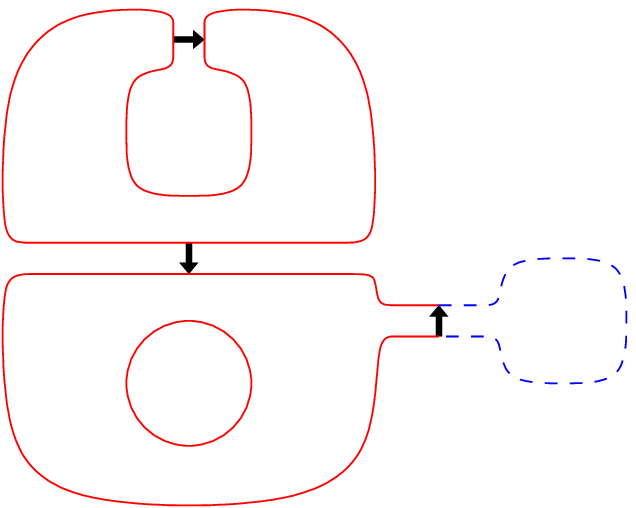}\put(29,12){\tiny
        $1$}\put(7,7){\tiny $0$}\put(7,43){\tiny $1$}\end{overpic}}
  \caption{An oriented labeled resolution configuration and its
    dual. Recall, a starting circle appears in the monomial if it is
    labeled $1$ and does not appear in the monomial if it is labeled
    $0$; and an ending circle appears in the monomial if it is solid
    red and does not appear in the monomial if it is dashed
    blue.}\label{fig:resconfig-dual}
\end{figure}

\begin{definition}
 For nested subsets $u\sbseq v\sbseq\Crossings$ with
  $\card{v}-\card{u}=k$, let $\ResConfig{u}{v}$ denote the following
  index-$k$ resolution configuration: resolve a crossing $c$ by the
  $0$-resolution if $c\notin v$, resolve it by the $0$-resolution and
  add the surgery arc $\al_c$ if $c\in v\sm u$, and resolve it by the
  $1$-resolution otherwise; see \Figure{0-1}. Note, the starting
  circles of $Z(\ResConfig{u}{v})$ are the circles in $Z(\AssRes{u})$
  and the ending circles of $Z(\ResConfig{u}{v})$ are the circles of
  $Z(\AssRes{v})$. 

  A \emph{decoration} is a choice of an orientation of all the arcs in
  $A(\ResConfig{\varnothing}{\Crossings})$. We specify a decoration by
  drawing an arrowhead near each crossing of the link diagram $D$ so
  that the arcs in $A(\ResConfig{\varnothing}{\Crossings})$ are
  oriented in accord with the arrowheads. A decoration induces an
  orientation of all arcs in all resolution configurations
  $\ResConfig{u}{v}$, see \Figure{figure-eight}.
\end{definition}
\captionsetup[subfloat]{width={0.4\textwidth}}
\begin{figure}
  \centering \subfloat[A decorated link diagram $D$ with
  $\Crossings=\{c_1,\dots,c_4\}$.]{\begin{overpic}[width=0.4\textwidth]{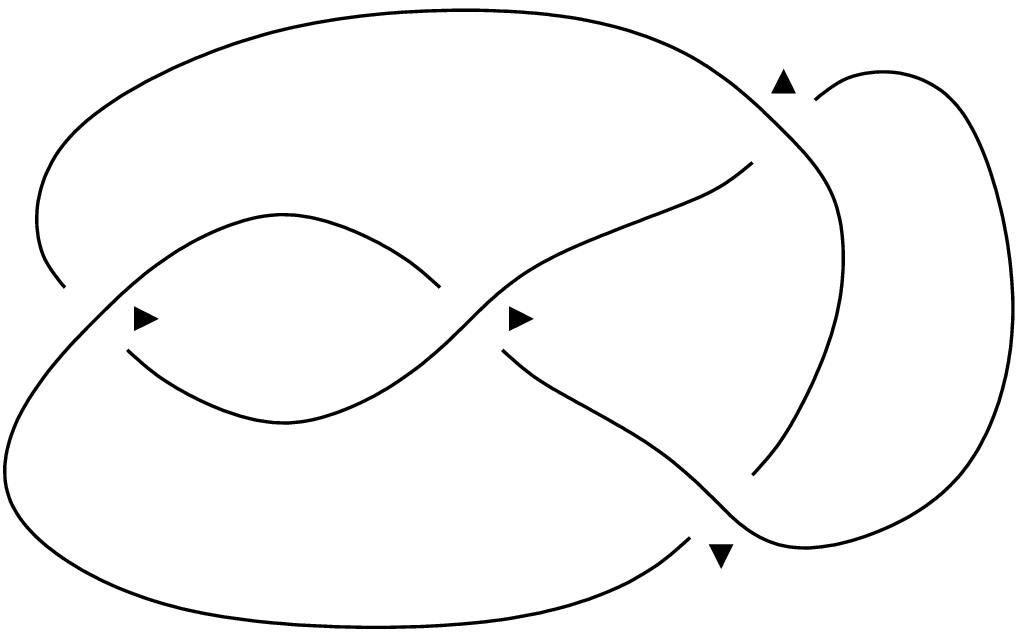}\put(9,25){\tiny
        $c_1$}\put(45,25){\tiny $c_2$}\put(65,11){\tiny
        $c_3$}\put(71,49){\tiny $c_4$}\end{overpic}}
  \hspace{0.1\textwidth} \subfloat[The oriented resolution
  configuration
  $\ResConfig{\{c_1\}}{\{c_1,c_2,c_3\}}$.]{\includegraphics[width=0.4\textwidth]{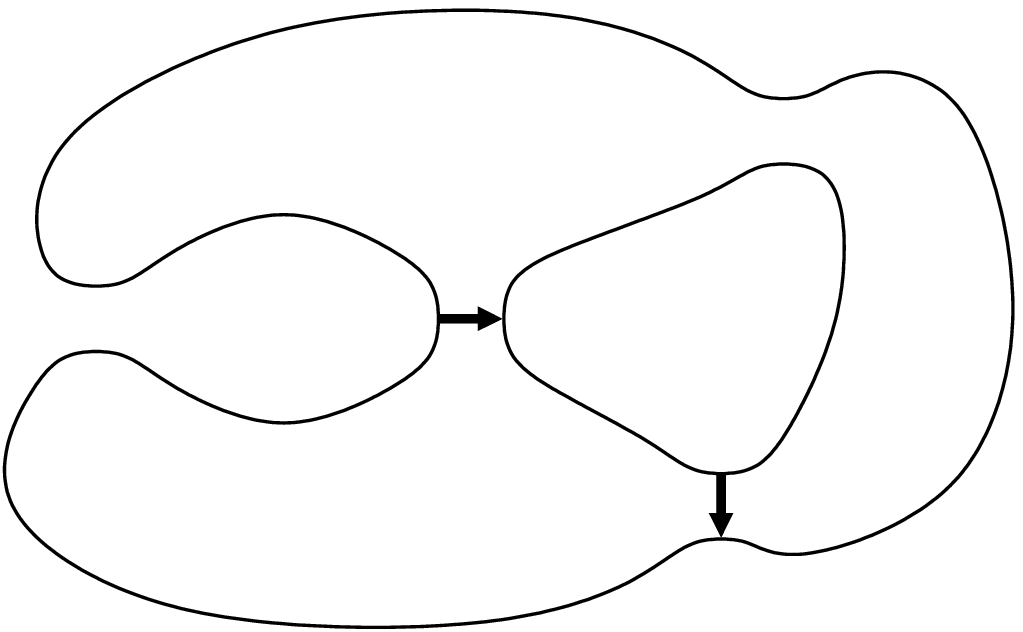}}
\caption{A decorated link diagram for the figure eight knot, and one of the
  induced oriented resolution configurations.}\label{fig:figure-eight}
\end{figure}

\begin{definition}
  A \emph{contribution function} $\cont{f}$ is a function from the set
  of all oriented labeled resolution configurations to $\F_2$.
  \begin{enumerate}
  \item $\cont{f}$ satisfies the naturality rule if it is preserved
    under equivalence. That is, if there is an isotopy of $S^2$
    carrying $(R,x,y)$ to $(R',x',y')$, then
    $\cont{f}((R,x,y))=\cont{f}((R',x',y'))$.
  \item $\cont{f}$ satisfies the conjugation rule if it is preserved
    under reversing. That is, for any $(R,x,y)$, 
    $\cont{f}((R,x,y))=\cont{f}((\Reverse{R},x,y))$.
  \item $\cont{f}$ satisfies the disoriented rule if it is preserved
    under an arbitrary re-orientation of arcs. That is, if $R$ and
    $R'$ differ only in the orientation of their arcs, then
    $\cont{f}((R,x,y))=\cont{f}((R',x,y))$.
  \item $\cont{f}$ satisfies the duality rule if it is preserved under
    dualizing and taking mirrors. That is, if
    $\Mirror{\Dual{R},\Dual{y},\Dual{x}}$ denotes the mirror of the
    dual of $(R,x,y)$, then
    $\cont{f}(\Mirror{\Dual{R},\Dual{y},\Dual{x}})=\cont{f}((R,x,y))$.
  \item $\cont{f}$ satisfies the extension rule if it only depends on
    the active part of the resolution configuration. That is, given
    $(R,x,y)$, let $R_a$ (respectively, $x_a$, $y_a$) be the active
    part of $R$ (respectively, $x$, $y$) and let $R_p$ (respectively,
    $x_p$, $y_p$) be the passive part of $R$ (respectively, $x$, $y$)
    so that $R=R_a\coprod R_p$ (respectively, $x=x_ax_p$,
    $y=y_ay_p$); then
    \[
    \cont{f}((R,x,y))=\begin{cases}\cont{f}((R_a,x_a,y_a))&\text{ if $x_p=y_p$,}\\0&\text{ otherwise.}\end{cases}
    \]
  \item $\cont{f}$ satisfies the filtration rule if it does not
    contribute whenever there is some point $p$ such that the starting
    circle containing $p$ is in the starting monomial, but the ending
    circle containing $p$ is not in the ending monomial. That is, if
    $\cont{f}((R,x,y))\neq 0$, then, every starting circle that appears
    in $x$ is disjoint from every ending circle that does not appear
    in $y$. 
  \end{enumerate}
 A contribution function $\cont{f}$ satisfying the naturality rule
  defines an endomorphism $\enmor{f}$ of the Khovanov chain group $\Cx$
  coming from a decorated link diagram as follows. Given Khovanov
  generators $(u,x)$ and $(v,y)$,
  \[
  \langle\enmor{f}((u,x)),(v,y)\rangle=\begin{cases}\cont{f}((\ResConfig{u}{v},x,y))&\text{ if $u\sbseq
      v$,}\\0&\text{ otherwise.}\end{cases}
  \]
  Clearly, if the contribution function $\cont{f}$ satisfies the
  disoriented rule, then the endomorphism $\enmor{f}$ does not depend on
  the choice of decoration of the link diagram. Also, if $\cont{f}$
  satisfies the filtration rule, and we are working with a pointed
  link diagram, then $\enmor{f}$ restricts to an
  endomorphism of $\Cx^-$.
\end{definition}

With these concepts in place, we are ready to recall the definitions
of the Khovanov differential, the Bar-Natan differential, and the
\Szabo{} differential, although not entirely in their original forms.
\begin{definition}[{\cite[Section~4.2]{Kho-kh-categorification}}]\label{def:khovanov-differential}
  The \emph{Khovanov contribution function} $\cont{k}$ satisfies the
  naturality rule, the disoriented rule (and hence the conjugation
  rule), the duality rule, the extension rule, and the filtration
  rule.  A labeled resolution configuration $(R,x,y)$ has non-zero
  contribution in $\cont{k}$ if and only if $x$ and $y$ agree on the
  passive circles, and the active part is equivalent to one of the
  four configurations in \Figure{khovanov-configurations}.

  In the Merge configurations, there are two starting circles with a
  single arc between them. In the Merge-A configuration, both the
  starting and the ending monomials are $1$, while in the Merge-B
  configuration, the starting monomial and the ending monomial each
  contains exactly one circle. The Split-A configuration and the
  Split-B configuration are the duals of the Merge-A configuration and
  the Merge-B configuration, respectively.

  Starting with a link diagram, the Khovanov differential
  $\diffKh=\dd_1$ is the $(1,0)$-graded endomorphism on $\Cx$ induced
  by $\cont{k}$,
  \[
  \langle\dd_1((u,x)),(v,y)\rangle=\begin{cases}\cont{k}((\ResConfig{u}{v},x,y))&\text{ if $u\sbseq
      v$,}\\0&\text{ otherwise.}\end{cases}
  \]
  It is indeed a differential, i.e., $(\diffKh)^2=0$, and the Khovanov
  chain complex is defined as
  \[
  \KhCx=(\Cx,\diffKh).
  \]
  
  Since $\cont{k}$ satisfies the filtration rule, for a pointed link
  diagram, $\diffKh$ restricts to an endomorphism on $\Cx^-$. Therefore,
  we get a subcomplex $\KhCx^-$ generated by $\Cx^-$ and a quotient
  complex $\KhCx^+$ generated by $\Cx^+$, which fit into a short exact
  sequence
  \[
  0\to\KhCx^-\{1\}\to\KhCx\to\KhCx^+\{-1\}\to 0.
  \]
\end{definition}

\begin{figure}
\begin{overpic}[width=0.9\textwidth]{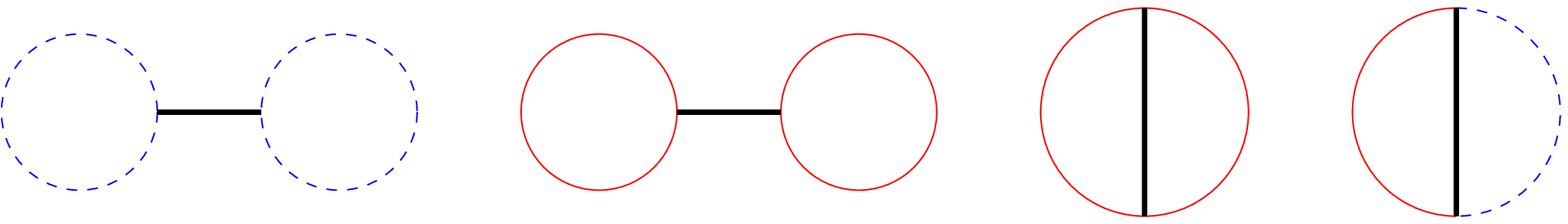}
\put(9,1){Merge-A}
\put(42,1){Merge-B}
\put(70,1){Split-A}
\put(90,1){Split-B}
\put(4.5,9){\tiny $0$}
\put(21,9){\tiny $0$}
\put(89,9){\tiny $0$}
\put(38,9){\tiny $0$}
\put(54,9){\tiny $1$}
\put(69,9){\tiny $1$}
\end{overpic}
  \caption{The labeled configurations that contribute to the
    function $\cont{k}$.}\label{fig:khovanov-configurations}
\end{figure}

\begin{figure}
\begin{overpic}[width=0.9\textwidth]{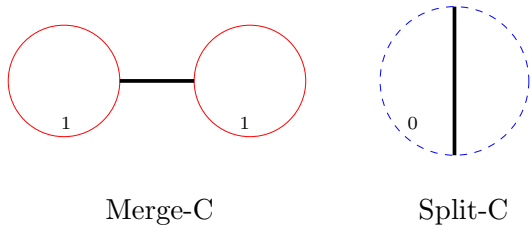}
\put(42,1){Merge-C}
\put(70,1){Split-C}
\put(38,9){\tiny $1$}
\put(54,9){\tiny $1$}
\put(69,9){\tiny $0$}
\end{overpic}
  \caption{The labeled configurations that contribute to the
    function $\cont{b}$.}\label{fig:bar-natan-configurations}
\end{figure}

\begin{definition}[{\cite[Section~9.3]{Bar-kh-tangle-cob}}]\label{def:barnatan-differential}
  The \emph{Bar-Natan contribution function} $\cont{b}$ satisfies the
  naturality rule, the disoriented rule (and hence the conjugation
  rule), the duality rule, the extension rule, and the filtration
  rule.  A labeled resolution configuration $(R,x,y)$ has non-zero
  contribution in $\cont{b}$ if and only if $x$ and $y$ agree on the
  passive circles, and the active part is equivalent to one of the two
  configurations in \Figure{bar-natan-configurations}.

  In the Merge-C configuration, there are two starting circles with a
  single arc between them, and both the starting and the ending
  monomials contain all the circles; the Split-C configuration is the
  dual of the Merge-C configuration.

  Starting with a link diagram, the Bar-Natan perturbation $\hh_1$ is
  the $(1,2)$-graded endomorphism on $\Cx$ induced by $\cont{b}$,
  \[
  \langle\hh_1((u,x)),(v,y)\rangle=\begin{cases}\cont{b}((\ResConfig{u}{v},x,y))&\text{ if $u\sbseq
      v$,}\\0&\text{ otherwise.}\end{cases}
  \]
  The Bar-Natan differential is the $(1,0)$-graded endomorphism
  $\diffBN=\dd_1+H\hh_1$ on $\Cx\otimes\F_2[H]$, with $H$ being a
  formal variable in $(\gr_h,\gr_q)$-bigrading $(0,-2)$. Once again,
  this is a differential, i.e., $(\diffBN)^2=0$, and the Bar-Natan
  chain complex is defined as
  \[
  \BNCx=(\Cx\otimes\F_2[H],\diffBN).
  \]
  Clearly, $\KhCx$ is obtained from $\BNCx$ by setting $H=0$. The
  filtered and the localized versions are defined as 
  \begin{align*}
    \fBNCx&=\BNCx/\{H=1\}=(\Cx,\dd_1+\hh_1)\\
    \lBNCx&=\BNCx\otimes_{\F_2[H]}\F_2[H,H^{-1}]=(\Cx\otimes\F_2[H,H^{-1}],\diffBN).
  \end{align*}

  Once again, since both $\cont{k}$ and $\cont{b}$ satisfy the
  filtration rule, for a pointed link diagram, we get a subcomplex
  $\BNCx^-$ generated by $\Cx^-$ and a quotient complex $\BNCx^+$
  generated by $\Cx^+$ fitting into a short exact sequence over
  $\F_2[H]$
  \[
  0\to\BNCx^-\{1\}\to\BNCx\to\BNCx^+\{-1\}\to 0.
  \]
\end{definition}

\captionsetup[subfloat]{width={0.4\textwidth}}
\begin{figure}
  \centering 
  \subfloat[Type-A
  configuration.]{\label{fig:A}\begin{overpic}[width=0.4\textwidth]{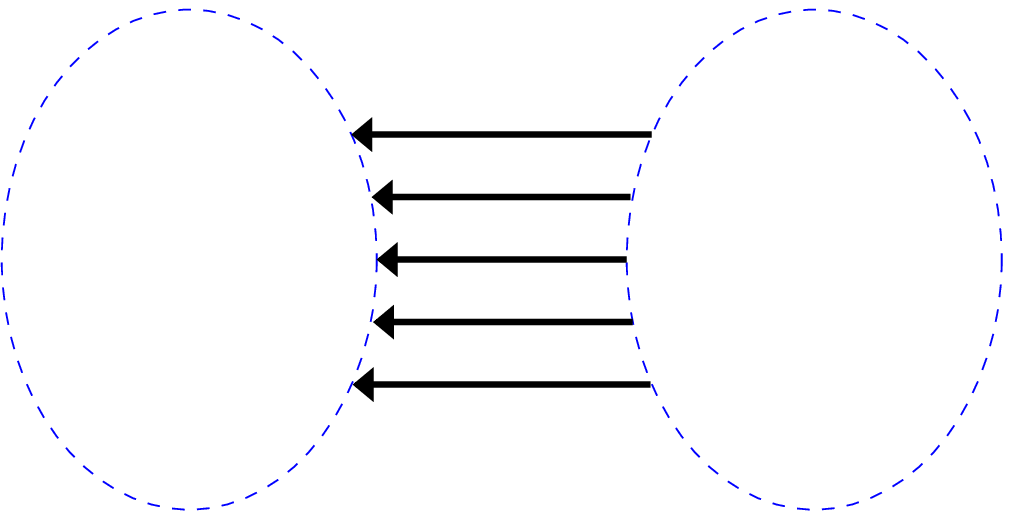}\put(7,7){\tiny $0$}\put(91,7){\tiny $0$}\end{overpic}}
  \hspace{0.1\textwidth} 
  \subfloat[Type-B configuration.]{\label{fig:B}\begin{overpic}[width=0.4\textwidth]{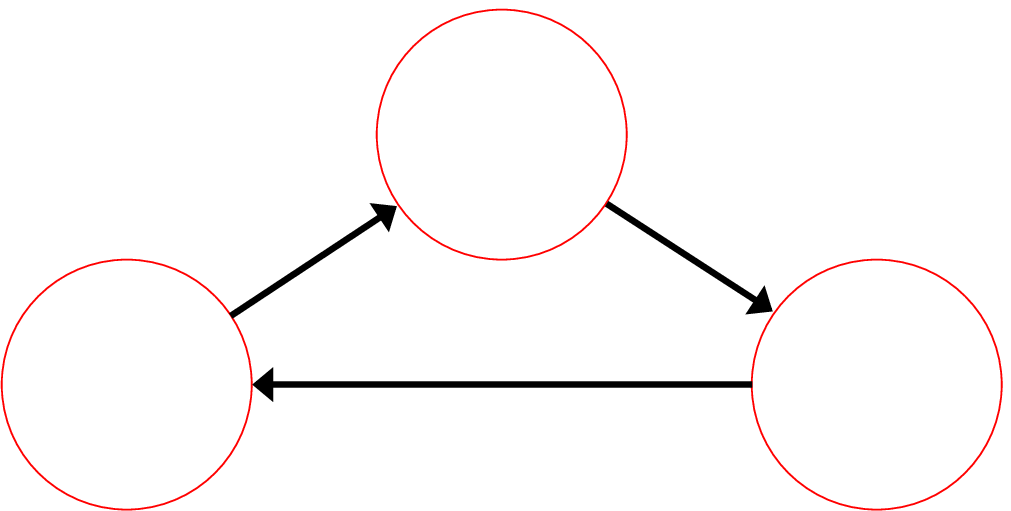}\put(5,5){\tiny
        $1$}\put(93,5){\tiny $1$}\put(49,28){\tiny $1$}\end{overpic}}
  \hspace{0.1\textwidth} 
  \subfloat[Type-C
  configuration.]{\label{fig:C}\begin{overpic}[width=0.4\textwidth]{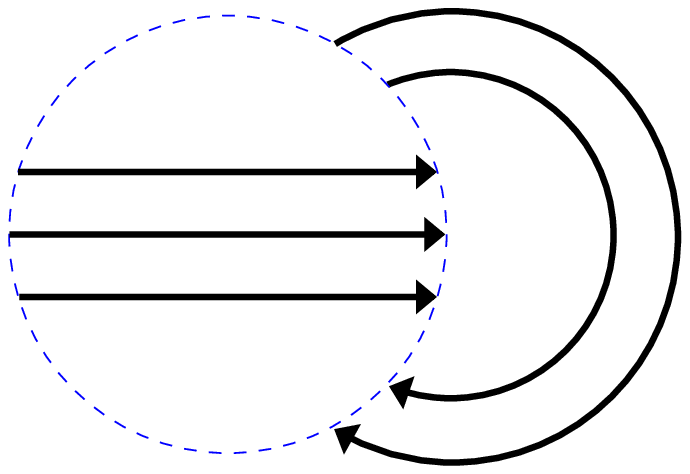}\put(29,9){\tiny $0$}\end{overpic}}
  \hspace{0.1\textwidth} 
  \subfloat[Type-D
  configuration.]{\label{fig:D}\begin{overpic}[width=0.4\textwidth]{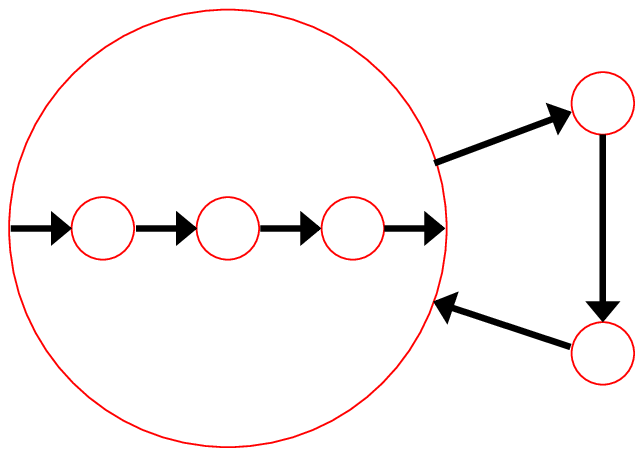}\put(30,9){\tiny
        $1$}\put(18,20){\tiny $1$}\put(30,20){\tiny
        $1$}\put(43,20){\tiny $1$}\put(73,39){\tiny $1$}\put(73,15){\tiny $1$}\end{overpic}}
  \hspace{0.1\textwidth} 
  \subfloat[Type-E configuration.]{\label{fig:E}\begin{overpic}[width=0.4\textwidth]{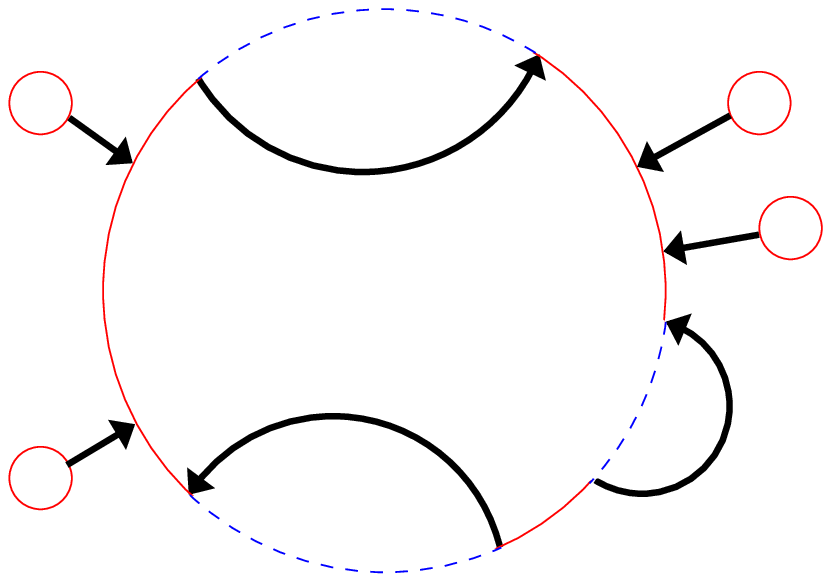}\put(15,3){\tiny
        $1$}\put(15,40){\tiny $1$}\put(23,25){\tiny
        $0$}\put(91,45){\tiny $1$}\put(94,33){\tiny $1$}\end{overpic}}
   \hspace{0.1\textwidth} 
  \subfloat[Type-E configuration.]{\label{fig:rE}\begin{overpic}[width=0.4\textwidth]{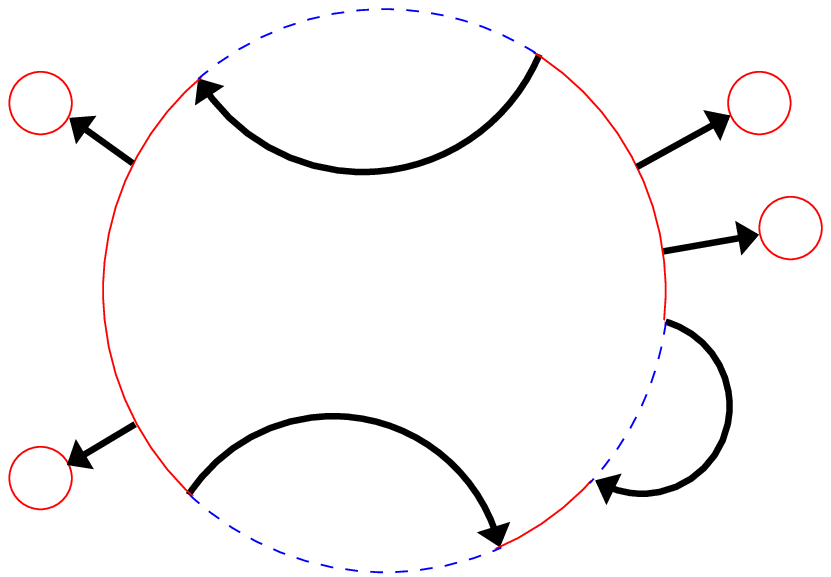}\put(15,3){\tiny
        $1$}\put(15,40){\tiny $1$}\put(23,25){\tiny
        $0$}\put(91,45){\tiny $1$}\put(94,33){\tiny $1$}\end{overpic}}
  \caption{The oriented labeled configurations that contribute to the
    function $\cont{d}$.}\label{fig:zoltan-configurations}
\end{figure}
\begin{definition}[{\cite[Section~3--4]{Szab-kh-geometric}}]\label{def:szabo-differential}
  The \emph{\Szabo{} contribution function} $\cont{d}$ satisfies the
  naturality rule, the conjugation rule (but not the disoriented
  rule), the duality rule, the extension rule, and the filtration
  rule. An oriented labeled resolution configuration $(R,x,y)$ has
  non-zero contribution in $\cont{d}$ if and only if $x$ and $y$ agree
  on the passive circles, and the active part is equivalent to a
  configuration from one of the five families in
  \cite[Figure~3]{Szab-kh-geometric} or
  \Figure{zoltan-configurations}.

  In a Type-A configuration (\Figure{A}), there are two starting
  circles, with some (at least one) parallel arcs between them; both
  the starting and the ending monomials are $1$. A Type-B
  configuration (\Figure{B}) is the dual of a Type-A configuration. In
  a Type-C configuration (\Figure{C}), there is a single starting
  circle, with some (at least one) parallel arcs inside, and some (at
  least one) parallel arcs outside, so that the inside arcs and the
  outside arcs are linked, and the orientations are as shown; both the
  starting and ending monomials are $1$. A Type-D configuration
  (\Figure{D}) is the mirror of the dual of a Type-C
  configuration. The Type-E configurations are slightly harder to
  describe. In the Type-E configuration from \Figure{E}, there is
  exactly one circle, call it the special starting circle, that does
  not appear in the starting monomial; and there is exactly one
  circle, call it the special ending circle, that does appear in the
  ending monomial. All the non-special starting circles are incident
  to exactly one arc; and all the non-special ending circles are
  incident to exactly one dual arc. The arcs either run from the
  non-special starting circles to the special starting circle, or run
  from the special starting circle to itself; in the latter case, they
  are oriented as shown (depending on whether they are inside or
  outside of the special starting circle). We also impose the
  condition that the index of the configuration is at least one. The
  Type-E configuration from \Figure{rE} is the reverse of the Type-E
  configuration from \Figure{E}.

  Working with a decorated link diagram, the induced endomorphism
  $\dd$ of $\Cx$, defined as
  \[
  \langle\enmor{d}((u,x)),(v,y)\rangle=\begin{cases}\cont{d}((\ResConfig{u}{v},x,y))&\text{
      if $u\sbseq v$,}\\0&\text{ otherwise,}\end{cases}
  \]
  increases the homological grading $\gr_h$ by at least one, and
  increases delta grading $\gr_{\diff}=\gr_h-\gr_q/2$ by exactly one;
  therefore, $\dd$ can be (uniquely) written as
  $\sum_{i=1}^{\infty}\dd_i$, where $\dd_i$ increases $\gr_h$ by
  exactly $i$ and increases $\gr_q$ by exactly $2i-2$. Indeed, $\dd_1$
  is independent of the initial decoration, and is the Khovanov
  differential from \Definition{khovanov-differential}. The \Szabo{}
  geometric differential is the $(1,0)$-graded endomorphism
  $\diffSz=\dd_1+W\dd_2+W^2\dd_3+\cdots$ on $\Cx\otimes\F_2[W]$ with
  $W$ being a formal variable in $(\gr_h,\gr_q)$-bigrading
  $(-1,-2)$. It is shown in \cite[Section~6]{Szab-kh-geometric} that
  this is a differential, i.e., $(\diffSz)^2=0$, and the \Szabo{}
  chain complex is defined as
  \[
  \SzCx=(\Cx\otimes\F_2[W],\diffSz).
  \]
  The Khovanov chain complex $\KhCx$ can be recovered from $\SzCx$ by
  setting $W=0$. The filtered version is defined
  as
  \[
    \fSzCx=\SzCx/\{W=1\}=(\Cx,\dd_1+\dd_2+\dd_3+\dots)=(\Cx,\dd).
  \]

  As before, since $\cont{d}$ satisfies the filtration rule, for a
  pointed decorated link diagram, we get a subcomplex $\SzCx^-$
  generated by $\Cx^-$ and a quotient complex $\SzCx^+$ generated by
  $\Cx^+$ fitting into a short exact sequence over $\F_2[W]$
  \[
  0\to\SzCx^-\{1\}\to\SzCx\to\SzCx^+\{-1\}\to 0.
  \]
\end{definition}

Therefore, to summarize the existing story, we have a $(1,2)$-graded
endomorphism $\hh_1$, and for all $i>0$, $(i,2i-2)$-graded
endomorphisms $\dd_i$ for $\Cx$, satisfying $(\dd_1+\hh_1)^2=\dd^2=0$,
where $\dd=(\dd_1+\dd_2+\dd_3+\cdots)$. From grading considerations,
this breaks up into the following equations
\begin{align*}
\hh_1^2&=0&\Commute{\hh_1}{\dd_1}&=0&\dd_1^2&=0\\
&&&&\Commute{\dd_1}{\dd_2}&=0\\
&&&&\Commute{\dd_1}{\dd_3}+\dd_2^2&=0\\
&&&&\cdots&
\end{align*}
Therefore, the Khovanov chain complex, the Bar-Natan chain complex,
and the \Szabo{} geometric chain complex can be defined as the
following objects, respectively:
\begin{align*}
  \KhCx&=(\Cx,\diffKh\defeq\dd_1)\\
  \BNCx&=(\Cx\otimes\F_2[H],\diffBN\defeq\dd_1+H\hh_1)\\
  \SzCx&=(\Cx\otimes\F_2[W],\diffSz\defeq\dd_1+W\dd_2+W^2\dd_3+\cdots)
\end{align*}
Here $H$ and $W$ are formal variables carrying
$(\gr_h,\gr_q)$-bigradings $(0,-2)$ and $(-1,-2)$, respectively, so
that above complexes are also bigraded with the differential being of
grading $(1,0)$.

In the next section, we will extend the definition of $\hh_1$ to
construct $(i,2i)$-graded endomorphisms $\hh_i$ for all $i$. Setting
$\hh=(\hh_1+\hh_2+\hh_3+\cdots)$, it will satisfy
$\hh^2=\Commute{\hh}{\dd}=0$. Expanding along gradings, we will get equations
\begin{align*}
  \hh_1^2&=0&\Commute{\hh_1}{\dd_1}&=0\\
  \Commute{\hh_1}{\hh_1}&=0&\Commute{\hh_1}{\dd_2}+\Commute{\hh_2}{\dd_1}&=0\\
  \Commute{\hh_1}{\hh_3}+\hh_2^2&=0&\Commute{\hh_1}{\dd_3}+\Commute{\hh_2}{\dd_2}+\Commute{\hh_3}{\dd_1}&=0\\
  \cdots&&\cdots&
\end{align*}
and we will define our chain complex as
\[
\OurCx=(\Cx\otimes\F_2[H,W],\diffOur\defeq\dd_1+W\dd_2+W^2\dd_3
+\cdots+H\hh_1+HW\hh_2+HW^2\hh_3+\cdots).
\]
with the differential lying in grading $(1,0)$. Clearly, $\BNCx$ and
$\SzCx$ can be obtained from $\OurCx$ by setting $W=0$ and $H=0$,
respectively, while $\KhCx$ can be obtained by setting both to zero;
therefore, $\OurCx$ will be the master chain complex.

The endomorphism $\dd$ was defined from a contribution function
$\cont{d}$, and $\dd_i$ was simply the graded part of $\dd$ that
increased the homological grading by $i$. We will also define the
endomorphism $\hh$ from some contribution function $\cont{h}$, and
$\hh_i$ will be graded part of $\hh$ that increases the homological
grading by $i$.  Therefore, in order to introduce our chain complex,
all that remains is to define the contribution function $\cont{h}$. We
turn to this in the next section.

\section{Construction}\label{sec:construction}

\begin{definition}\label{def:tree}
  A \emph{tree} is a labeled resolution configuration of some index $k\geq
  0$, with exactly $(k+1)$ starting circles, exactly one ending
  circle, and no passive circles, and with all the starting circles
  appearing in the starting monomial, and the ending circle appearing
  in the ending monomial.
\end{definition}

\begin{definition}\label{def:our-contribution-function}
  The contribution function $\cont{h}$ is defined as follows: It is
  non-zero for a labeled resolution configuration $(R,x,y)$ if only if
  $(R,x,y)$ is a disjoint union of trees and dual trees, and the
  index of $R$ is at least one. See \Figure{mixed-forest} for a
  resolution configuration that contributes to $\cont{h}$.
\end{definition}

\begin{figure}
\begin{overpic}[width=0.8\textwidth]{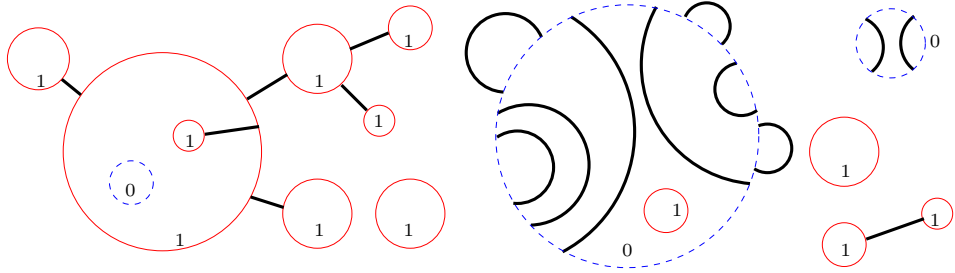}
\put(15,8){\tiny $0$}
\put(65,2){\tiny $0$}
\put(96,23){\tiny $0$}
\put(6,19){\tiny $1$}
\put(34,19){\tiny $1$}
\put(20,3){\tiny $1$}
\put(21,13){\tiny $1$}
\put(34,4){\tiny $1$}
\put(43,4){\tiny $1$}
\put(40,15){\tiny $1$}
\put(43,23){\tiny $1$}
\put(70,6){\tiny $1$}
\put(87,10){\tiny $1$}
\put(87,2){\tiny $1$}
\put(96,5){\tiny $1$}
\end{overpic}
\caption{A labeled configuration that contributes to $\cont{h}$.}\label{fig:mixed-forest}
\end{figure}

\begin{lemma}
  The contribution function $\cont{h}$ satisfies the naturality rule,
  the disoriented rule (and hence the conjugation rule), the duality
  rule, the extension rule, and the filtration rule. The induced
  endomorphism $\hh$ of $\Cx$ coming from some link diagram $D$ (or of
  $\Cx^\pm$ coming from some pointed link diagram $(D,p)$), defined as
  \[
  \langle\enmor{h}((u,x)),(v,y)\rangle=\begin{cases}\cont{h}((\ResConfig{u}{v},x,y))&\text{ if $u\sbseq
      v$,}\\0&\text{ otherwise,}\end{cases}
  \] 
  increases the homological grading $\gr_h$ by at least one, and
  preserves the delta grading $\gr_{\diff}=\gr_h-\gr_q/2$; therefore,
  $\hh$ can be (uniquely) written as $\sum_{i=1}^{\infty}\hh_i$, where
  $\hh_i$ increases $\gr_h$ by $i$ and increases $\gr_q$ by
  $2i$. Furthermore, $\hh_1$ is the Bar-Natan perturbation from
  \Definition{barnatan-differential}.
\end{lemma}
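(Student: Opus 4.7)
The plan is to verify the five contribution-function rules directly from the combinatorics of trees and dual trees, then compute the grading shifts on each tree/dual tree component, and finally identify the index-$1$ piece with the Bar-Natan perturbation.

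For the rules, the naturality rule is immediate, since whether a labeled resolution configuration is a disjoint union of trees and dual trees depends only on its equivalence class. The disoriented rule (and hence the conjugation rule) is also immediate, since the definition of $\cont{h}$ never refers to arc orientations. For the duality rule, I would first observe that the dual of a tree is, by the very definition of duality (with all starting labels flipped from $1$ to absent and all ending labels flipped from $1$ to absent), a dual tree, and vice versa; mirroring permutes trees among themselves and dual trees among themselves, so $\Mirror{\Dual{R},\Dual{y},\Dual{x}}$ is again a disjoint union of trees and dual trees with the same index. For the extension rule, I would interpret each passive circle as an index-$0$ tree (if it appears in both monomials) or index-$0$ dual tree (if it appears in neither), and observe that mixed labelings of a passive circle prevent any decomposition into trees and dual trees, so the value of $\cont{h}$ forces $x_p=y_p$ and only sees the active part. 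For the filtration rule, I note that in a tree every starting circle is in $x$ but the unique ending circle is in $y$, while in a dual tree no starting circle is in $x$; hence in a disjoint union, any starting circle appearing in $x$ lies in a tree component whose ending circles all appear in $y$, and is therefore disjoint from all ending circles that do not appear in $y$.

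Next I would verify the grading. Fix $u\sbseq v$ with $\ResConfig{u}{v}$ a disjoint union of trees and dual trees, let $k=\card{v}-\card{u}$, and let the components be trees of indices $k_1,\dots,k_s$ and dual trees of indices $k'_1,\dots,k'_t$ (plus possibly passive index-$0$ pieces, which contribute zero to everything). Then $\Delta\gr_h=k=\sum k_i+\sum k'_j$. Using $\gr_q((u,x))=n_+-2n_-+\card u+\card{Z(\AssRes u)}-2\deg(x)$, a tree-component of index $k_i$ contributes $k_i$ to $\Delta\card u$, $-k_i$ to $\Delta\card{Z(\AssRes u)}$ (since $k_i+1$ starting circles merge to one ending circle), and $-k_i$ to $\Delta\deg$ (from $k_i+1$ down to $1$), giving a net $\Delta\gr_q$ contribution of $k_i-k_i-2(-k_i)=2k_i$. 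A dual tree-component of index $k'_j$ similarly contributes $k'_j+k'_j-0=2k'_j$. Summing, $\Delta\gr_q=2\Delta\gr_h$, so $\gr_{\diff}=\gr_h-\gr_q/2$ is preserved. Since the index of $R$ is required to be at least one, $\Delta\gr_h\geq 1$, so $\hh$ splits as $\hh=\sum_{i\geq 1}\hh_i$ with $\hh_i$ of bigrading $(i,2i)$.

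Finally, to identify $\hh_1$ with the Bar-Natan perturbation, I unpack the index-$1$ trees and dual trees. An index-$1$ tree has two starting circles joined by a single arc and one ending circle, with all labels $1$, which is exactly the Merge-C configuration of \Figure{bar-natan-configurations}. Its dual, the index-$1$ dual tree, is the Split-C configuration with all labels $0$. Combined with the extension rule (passive agreement of $x$ and $y$), this matches \Definition{barnatan-differential} verbatim, so $\hh_1$ induces the same endomorphism as $\cont{b}$. I expect the only mildly delicate step to be the bookkeeping of how passive circles fit into the "disjoint union of trees and dual trees" picture for the extension rule, but this is a definitional matter rather than a substantive obstacle.
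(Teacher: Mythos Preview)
Your proposal is correct and takes essentially the same approach as the paper, which proves this lemma with the single sentence ``This is immediate from the definitions.'' You have simply written out the verification that the paper declines to spell out: the five rules checked from the combinatorics of trees and dual trees, the component-by-component grading computation giving $\Delta\gr_q=2\Delta\gr_h$, and the identification of index-$1$ trees and dual trees with the Merge-C and Split-C configurations.

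Your closing caveat about passive circles is well placed. Strictly read, Definition~\ref{def:tree} forbids passive circles, which for $k=0$ is vacuously violated; the paper evidently intends index-$0$ trees and dual trees to be the single-circle pieces with matching labels (as is clear from the graph-minor language in the proof of Lemma~\ref{lem:h-behavior}), and under that reading the extension rule goes through exactly as you describe. This is indeed a definitional wrinkle rather than a mathematical obstruction.
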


\begin{proof}
This is immediate from the definitions.
\end{proof}

\begin{prop}\label{prop:is-differential}
The endomorphism $\dd+\hh$ is a differential on $\Cx$. 
\end{prop}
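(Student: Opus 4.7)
Since we work over $\F_2$, the identity $(\dd+\hh)^2=0$ expands as $\dd^2 + \Commute{\dd}{\hh} + \hh^2=0$. The first summand vanishes by \cite[Section~6]{Szab-kh-geometric}, and a $(\homgr,\intgr)$-bigrading comparison---each $\hh_i\hh_j$ sits in bigrading $(i+j,\,2(i+j))$ while each piece of $\Commute{\dd}{\hh}$ sits in $(i+j,\,2(i+j)-2)$---shows that the two remaining summands must vanish independently. So my plan is to prove $\hh^2=0$ and $\Commute{\dd}{\hh}=0$ separately. Both are statements that, for each pair of Khovanov generators $(u,x)$ and $(w,z)$ with $u\sbseq w$, a certain sum over intermediate generators $(v,y)$ vanishes in $\F_2$. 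Using the extension rule for $\cont{d}$ and $\cont{h}$, I will reduce to the active part of $\ResConfig{u}{w}$ with the labels restricted from $x$ and $z$, and then produce a fixed-point-free involution on the set of contributing intermediates.

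For $\hh^2=0$: a contributing $(v,y)$ presents $(\ResConfig{u}{w},x,z)$ as a ``stack'' of two disjoint unions of trees and dual trees. Recall that a tree has all its starting circles in $x$ and its unique ending circle in $y$, while a dual tree has its unique starting circle out of $x$ and all its ending circles out of $y$. So the labels and the tree-versus-dual-tree type of each component are largely forced by $x$, $z$, and the active part of $R$; the remaining freedom is how to partition the arcs between the two steps. I plan to build the involution by toggling whether the ``last'' arc of some canonical piece is processed in the first step or the second. A small catalogue of local cases (tree-over-tree, dual-tree-over-dual-tree, tree-over-dual-tree, dual-tree-over-tree) should cover all possibilities, and the duality rule for $\cont{h}$ should pair them up.

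For $\Commute{\dd}{\hh}=0$: now exactly one of the two steps is governed by $\cont{d}$ and the other by $\cont{h}$. I plan to pair, for each active configuration, every intermediate in which the $\cont{d}$-step comes first with one in which the $\cont{h}$-step comes first. The natural organization is by the \Szabo{} type (A--E, see \Figure{zoltan-configurations}) of the $\cont{d}$-step. For each type the local picture is small and explicit: one can try reinterpreting the \Szabo{} configuration together with the adjacent tree/dual-tree as a single larger labeled resolution configuration, and reading off the opposite order of composition. The conjugation and duality symmetries of both $\cont{d}$ and $\cont{h}$ should collapse the five types into a much smaller number of essentially distinct cases.

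The main obstacle will be the case analysis for $\Commute{\dd}{\hh}=0$, especially for Types~C--E where arcs link nontrivially and where the special starting/ending circles of a Type-E configuration may coincide with circles of an adjacent tree. In those cases I will need to verify not only that the proposed swap-of-order bijection produces another contributing intermediate (for instance, that the new first-step configuration is still of the required \Szabo{} or tree form), but also that the bijection is globally consistent across disjoint components of the active configuration and genuinely involutive. Once the combinatorics is laid out carefully, the verification should be routine but laborious, as is typical for differential-squaring identities of this kind.
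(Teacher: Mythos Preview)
Your overall decomposition matches the paper's, and your plan for $\hh^2=0$ is in the right spirit. The paper handles that piece more algebraically: it decomposes $\hh=\sum_{\varnothing\neq s\subseteq\Crossings}\hh_s$ and proves the identity $\hh_s\hh_t=\hh_{s\amalg t}$ for disjoint $s,t$ (and $=0$ otherwise), from which $\Commute{\hh_s}{\hh_t}=0$ and hence $\hh^2=0$ are immediate. This algebraic structure is not just a convenience---it is reused crucially in the harder part.

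The substantive divergence, and the gap in your outline, is in $\Commute{\dd}{\hh}=0$. You propose a direct fixed-point-free involution on intermediates, organized by the \Szabo{} type A--E of the $\cont{d}$-step. The paper takes a different route: it proves a \emph{decoration-independence} lemma by induction on the index. Using $\hh_s\hh_t=\hh_{s\amalg t}$ together with \Szabo's formula expressing $\dd'$ in terms of $\dd$ under a change of decoration (\cite[Theorem~5.4]{Szab-kh-geometric}), the paper shows that if $\langle\Commute{\hh}{\dd}((u,x)),(v,y)\rangle=0$ for all configurations of index ${<}k$ and all decorations, then for index $k$ this quantity is independent of the decoration. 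Hence for each fixed $(\ResConfig{u}{v},x,y)$ one may \emph{choose} the arc orientations at will. With a judicious choice the enumeration collapses: for instance, in the Type-$\gamma$ case the orientations are chosen so that Type-C \Szabo{} pieces never arise, and the count reduces to $1+1$.

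Your proposal does not have access to this freedom. The conjugation rule you invoke reverses \emph{all} arcs simultaneously; it does not let you reorient arcs independently, which is what the paper's inductive mechanism buys. Without decoration-independence, your swap-of-order involution must work uniformly across all orientations of the combined configuration, and it is not clear that it does. Concretely, in a Type-$\gamma$ configuration the set of intermediates whose $\cont{d}$-step is of Type-A versus Type-C depends on which subsets of the inside and outside arcs happen to be coherently oriented, and these counts are orientation-sensitive; the paper's clean $1+1$ only holds after the orientations have been fixed as in its Figure~3.3. So either you need to rediscover the decoration-independence lemma (which is the paper's key technical device here, built on the $\hh_s$ algebra), or you face a much more intricate, orientation-aware case analysis than your sketch acknowledges.
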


The main aim of this section is to prove the above statement, and we
will give the proof at the very end. In the meantime,
\Proposition{is-differential} allows us to define the following chain
complex.

\begin{definition}\label{def:main-complex}
  Starting from a decorated link diagram, our chain complex is defined
  over $\F_2[H,W]$ as
  \[
  \OurCx=(\Cx\otimes\F_2[H,W],\diffOur=\dd_1+W\dd_2+W^2\dd_3
  +\cdots+H\hh_1+HW\hh_2+HW^2\hh_3+\cdots)
  \]
  with $H$ and $W$ being formal variables lying in grading $(0,-2)$
  and $(-1,-2)$, and the differential being of grading
  $(1,0)$. Setting $H=0$ recovers the \Szabo{} geometric chain complex
  from \Definition{szabo-differential}, while setting $W=0$ recovers
  the Bar-Natan chain complex from
  \Definition{barnatan-differential}. Some filtered versions and a localized
  version are defined as
  \begin{align*}
    \fOurCx&=\OurCx/\{H=W=1\}=(\Cx,\dd+\hh)=(\Cx,\dd_1+\dd_2+\dots+\hh_1+\hh_2+\cdots)\\
    \HfOurCx&=\OurCx/\{H=1\}=(\Cx,(\dd_1+\hh_1)+W(\dd_2+\hh_2)+W^2(\dd_3+\hh_3)+\cdots)\\
    \HlOurCx&=\OurCx\otimes_{\F_2[H,W]}\F_2[H,H^{-1},W]=(\Cx\otimes\F_2[H,H^{-1},W],\diffOur).
  \end{align*}
  For a pointed decorated link diagram, $\OurCx^-$ (respectively,
  $\OurCx^+$) is defined to be the subcomplex (respectively, quotient
  complex) generated by $\Cx^-$ (respectively, $\Cx^+$); they fit into
  a short exact sequence over $\F_2[H,W]$
  \[
  0\to\OurCx^-\{1\}\to\OurCx\to\OurCx^+\{-1\}\to 0.
  \]
\end{definition}

\begin{example}\label{exam:trefoil-complex}
  Consider the decorated link diagram for the trefoil and its cube of
  resolutions in \Figure{trefoil-decor}. We have numbered the three
  crossings from left to right as $c_1,c_2,c_3$; the complete
  resolution at $u\subseteq\Crossings$ has been labeled by the
  corresponding vertex $\ol{u}\in\{0,1\}^3$, where $\ol{u}_i=1$ iff
  $c_i\in u$. Furthermore, the circles in each individual resolution
  are also numbered (usually left to right, sometimes top to bottom);
  let $x^{\ol{u}}_i$ be the circle labeled $i$ in the resolution at
  $u$. We will use the superscript $\ol{u}$ for the Khovanov
  generators living over $u$.
  
  \setlength\tempfigdim{0.08\textheight}
  \begin{figure}
    \[
    \xymatrix{
      \vcenter{\hbox{\begin{overpic}[height=\tempfigdim]{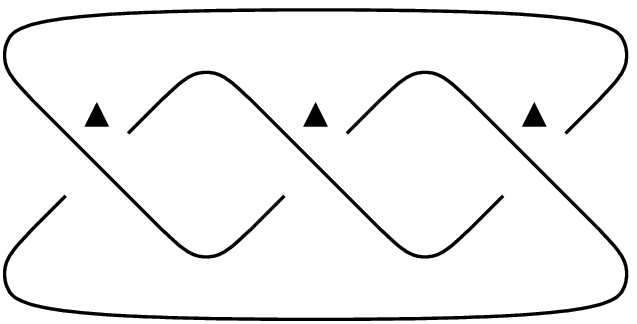}
            \put(12,15){\tiny $c_1$}\put(47,15){\tiny
              $c_2$}\put(82,15){\tiny $c_3$}
          \end{overpic}}}&&
      \vcenter{\hbox{\begin{overpic}[height=\tempfigdim]{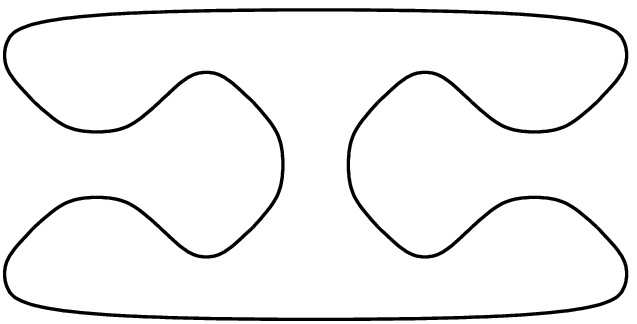}
            \put(5,5){\tiny $010$} 
            \put(5,40){\tiny $1$}
          \end{overpic}}}\ar[r]\ar[d]|(0.5)\hole&
      \vcenter{\hbox{\begin{overpic}[height=\tempfigdim]{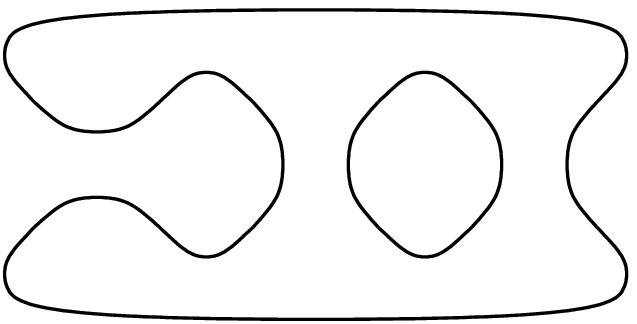}
            \put(5,5){\tiny $011$}
            \put(5,40){\tiny $1$}\put(65,30){\tiny $2$}
          \end{overpic}}}\ar[d]\\
      \vcenter{\hbox{\begin{overpic}[height=\tempfigdim]{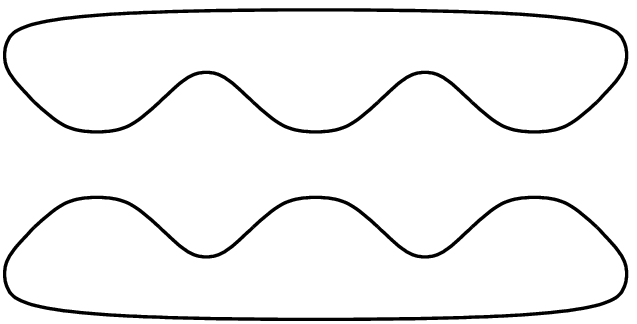}
            \put(5,5){\tiny $000$}
            \put(5,40){\tiny $1$}\put(47,10){\tiny $2$}
          \end{overpic}}}\ar[r]\ar[d]\ar[urr]&
      \vcenter{\hbox{\begin{overpic}[height=\tempfigdim]{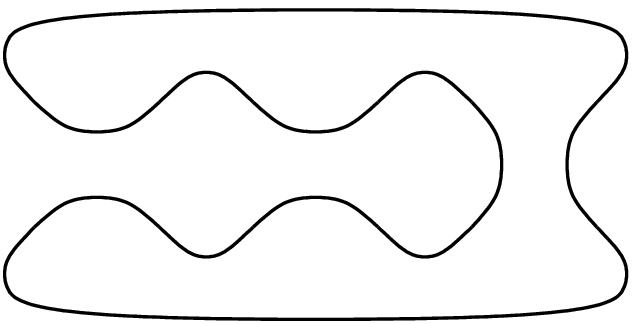}
            \put(5,5){\tiny $001$}
            \put(5,40){\tiny $1$} 
         \end{overpic}}}\ar[d]\ar[urr]&
      \vcenter{\hbox{\begin{overpic}[height=\tempfigdim]{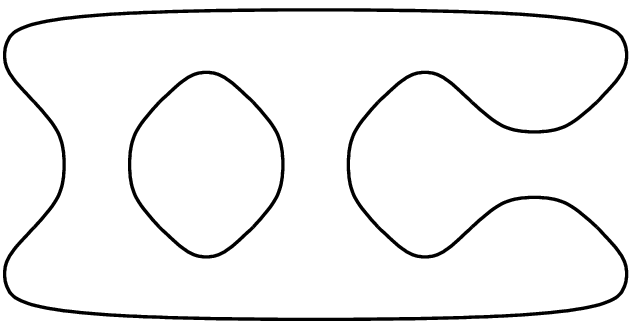}
            \put(5,5){\tiny $110$}
            \put(5,40){\tiny
              $1$}\put(30,30){\tiny $2$}
          \end{overpic}}}\ar[r]&
      \vcenter{\hbox{\begin{overpic}[height=\tempfigdim]{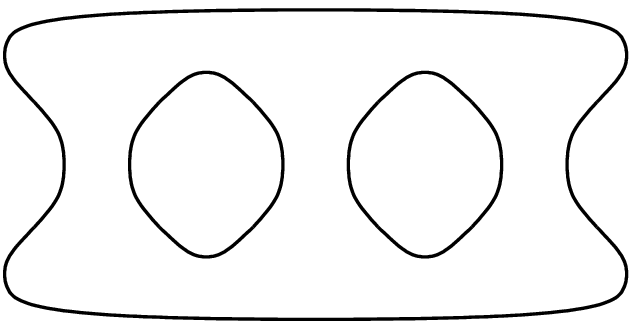}
            \put(5,5){\tiny $111$}
            \put(5,40){\tiny
              $1$}\put(30,30){\tiny $2$}\put(65,30){\tiny $3$}
          \end{overpic}}}\\
      \vcenter{\hbox{\begin{overpic}[height=\tempfigdim]{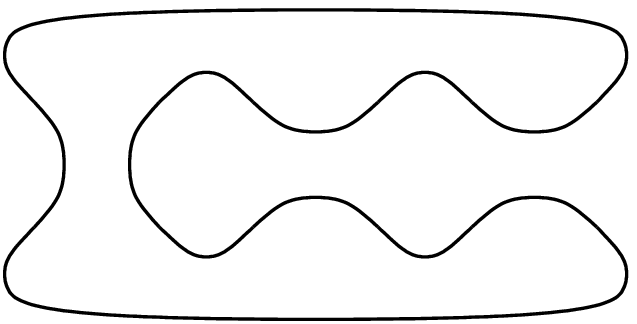}
            \put(5,5){\tiny $100$}
            \put(5,40){\tiny $1$}
          \end{overpic}}}\ar[r]\ar[urr]|(0.5)\hole&
      \vcenter{\hbox{\begin{overpic}[height=\tempfigdim]{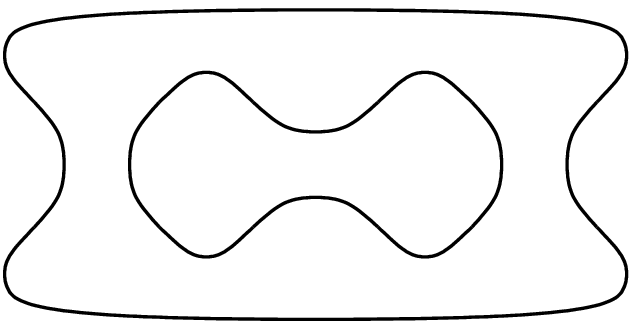}
            \put(5,5){\tiny $101$}
            \put(5,40){\tiny
              $1$}\put(30,30){\tiny $2$}
          \end{overpic}}}\ar[urr] 
    }
    \]
    \caption{A decorated link diagram for the trefoil and its cube of
      resolutions.}\label{fig:trefoil-decor}
  \end{figure}
  
  The complex $\OurCx$ is freely generated over $\F_2[H,W]$ by the
  thirty Khovanov generators. The differential $\diffOur$ is the
  following:
  {\small
  \begin{align*}
    &\diffOur(1^{000})=1^{100}+1^{010}+1^{001}+W(1^{110}+1^{101}+1^{011})+W^21^{111}\\
    &\diffOur(x^{000}_1x^{000}_2)=H\diffOur(x^{000}_1)=H\diffOur(x^{000}_2)=H(x_1^{100}+x_1^{010}+x_1^{001})\allowdisplaybreaks\\
    &\diffOur(1^{100})+x^{011}_1+x^{011}_2+H 1^{011}=\diffOur(1^{010})+x^{101}_1+x^{101}_2+H 1^{101}=\diffOur(1^{001})+x^{110}_1+x^{110}_2+H 1^{110}\\
    &\qquad{}=x^{110}_1+x^{110}_2+x^{101}_1+x^{101}_2+x^{011}_1+x^{011}_2+H(1^{110}+1^{101}+1^{011})+HW 1^{111}\allowdisplaybreaks\\
   &\diffOur(x_1^{100})+x^{011}_1x^{011}_2=\diffOur(x_1^{010})+x^{101}_1x^{101}_2=\diffOur(x_1^{001})+x^{110}_1x^{110}_2=x^{110}_1x^{110}_2+x^{101}_1x^{101}_2+x^{011}_1x^{011}_2\allowdisplaybreaks\\
   &\diffOur(1^{110})+x^{111}_2=\diffOur(1^{101})+x^{111}_1=\diffOur(1^{011})+x^{111}_3=x^{111}_1+x^{111}_2+x^{111}_3+H 1^{111}\allowdisplaybreaks\\
   &\diffOur(x^{110}_1)+x^{111}_1x^{111}_3=\diffOur(x^{101}_2)+x^{111}_2x^{111}_3=\diffOur(x^{011}_1)+x^{111}_1x^{111}_2=0\allowdisplaybreaks\\
   &\diffOur(x^{110}_2)+x^{111}_1x^{111}_3+Hx^{111}_2=\diffOur(x^{101}_1)+x^{111}_2x^{111}_3+Hx^{111}_1=\diffOur(x^{011}_2)+x^{111}_1x^{111}_2+Hx^{111}_3\\
   &\qquad{}=x^{111}_1x^{111}_2+x^{111}_1x^{111}_3+x^{111}_2x^{111}_3\\
   &\diffOur(x^{110}_1x^{110}_2)=\diffOur(x^{101}_1x^{101}_2)=\diffOur(x^{011}_1x^{011}_2)=x^{111}_1x^{111}_2x^{111}_3;
  \end{align*}
}
  and it is zero on the other (eight) Khovanov generators.
\end{example}

We will now spend the rest of the section working towards a proof of
\Proposition{is-differential}.  Towards that end, we construct a
decomposition of $\hh$, generalize the notion of trees and dual trees,
and prove some relevant lemmas.

\begin{definition}
For each non-empty subset $\varnothing\neq s\sbseq\Crossings$, define the following endomorphism
$\hh_s$ of $\Cx$.
\[
\langle\hh_s((u,x)),(v,y)\rangle=\begin{cases}
\langle\hh((u,x)),(v,y)\rangle&\text{ if $v=u\coprod s$,}\\
0&\text{ otherwise.}
\end{cases}
\]
It is clear that $\hh=\displaystyle\sum_{\varnothing\neq s\sbseq\Crossings}\hh_s$;
indeed, $\hh_i=\displaystyle\sum_{\substack{s \sbseq\Crossings\\\card{s}=i}}\hh_s$.
\end{definition}

\begin{definition}
  In a similar vein to \Definition{tree}, let a \emph{potted tree}
  denote a part of a labeled resolution configuration consisting of
  $k$ initial circles and $(k-1)$ arcs that are arranged as a tree
  (with $k\geq 1$), such that there is exactly one additional arc
  joining these $k$ circles to the rest of the resolution
  configuration; furthermore, all these $k$ circles appear in the
  starting monomial, and the unique ending circle they correspond to,
  appears in the ending monomial. A \emph{potted dual tree} is part of
  a labeled resolution configuration whose dual is a potted tree.
\end{definition}

\begin{lemma}\label{lem:h-behavior}
For any non-empty subsets $s,t$ of $\Crossings$,
\[
\hh_s\hh_t=
\begin{cases}
\hh_{s\coprod t}&\text{ if $s\bigcap t=\varnothing$,}\\
0&\text{ otherwise.}
\end{cases}
\]
\end{lemma}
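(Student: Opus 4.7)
The plan is to verify the lemma by direct comparison of matrix coefficients, handling the cases $s \cap t \neq \varnothing$ and $s \cap t = \varnothing$ separately. The first case is immediate: $\hh_t((u,x))$ is supported on Khovanov generators with underlying subset $u \coprod t$, and then $\hh_s$ applied to these requires $s$ to be disjoint from $u \coprod t$; if $s \cap t \neq \varnothing$ this fails for every $u$, so $\hh_s \hh_t = 0$.

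For the substantive case $s \cap t = \varnothing$, fix Khovanov generators $(u,x)$ and $(v,y)$. Both sides vanish unless $v = u \coprod s \coprod t$, so I assume this and set $R = \ResConfig{u}{v}$, $R_1 = \ResConfig{u}{u \coprod t}$, and $R_2 = \ResConfig{u \coprod t}{v}$. The arcs of $R$ split as the $t$-arcs (which are precisely the arcs of $R_1$) and the $s$-arcs (which are those of $R_2$). The identity to prove is
\[
\cont{h}((R,x,y)) = \sum_{z \in \Basis{u \coprod t}} \cont{h}((R_1,x,z)) \cdot \cont{h}((R_2,z,y)),
\]
and by the extension rule I may suppress passive circles and assume every circle of $R$ is active.

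The heart of the argument is to set up a bijection between tree/dual tree decompositions of $(R,x,y)$ and pairs of such decompositions of $(R_1, x, z)$ and $(R_2, z, y)$ for a uniquely determined intermediate $z$. For each tree component $T$ of $R$, the $t$-arcs in $T$ form a subforest; its connected components are themselves labeled trees contributing to $(R_1, x, z)$, and the requirement that each of their unique ending circles lies in $z$ determines the $z$-labels arising from $T$; after contracting these $t$-edges, the surviving $s$-arcs together with the intermediate circles reassemble into a single tree in $(R_2, z, y)$. For each dual tree component $D$ of $R$, the $t$-arcs of $D$ all sit on $D$'s unique starting circle and split it into a family of intermediate circles, forming a dual tree in $(R_1, x, z)$ whose ending circles are forced out of $z$; the remaining $s$-arcs then distribute among these intermediate circles and decompose the $D$-portion of $(R_2, z, y)$ into a disjoint union of dual trees, one per intermediate circle (some possibly of index zero). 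These local choices of $z$ assemble uniquely to a global $z$, and both factors on the right are $1$.

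The main obstacle is verifying this bijection geometrically. In the forward direction, one must check that the $t$-arc subforest of a tree component is automatically a disjoint union of labeled trees, and symmetrically that the $t$-arcs inside a dual tree component genuinely split its circle into disjoint circles with a dual-tree topology compatible with the remaining $s$-arcs; this reduces to a careful analysis of the planar structure of trees and their duals, namely that removing a subset of arcs from a (dual) tree yields a disjoint union of smaller (dual) trees. In the reverse direction, given any $z$ making both factors nonzero, one must reconstruct a tree/dual tree decomposition of $(R,x,y)$ and show that no other $z$ contributes; the rigidity here comes from the fact that an intermediate circle's membership in $z$ is prescribed simultaneously by its role as an ending circle in the first factor and as a starting circle in the second, and these two requirements are compatible precisely when the composite configuration is itself a disjoint union of trees and dual trees. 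Once these verifications are in place, the two sides of the matrix identity are each equal to $1$ in exactly the same cases, completing the proof.
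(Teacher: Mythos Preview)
Your proposal is correct and follows essentially the same strategy as the paper's proof: reduce to matrix coefficients, dispose of the non-disjoint case trivially, and for disjoint $s,t$ establish that $(\ResConfig{u}{v},x,y)$ is a union of trees and dual trees if and only if there is a (necessarily unique) intermediate $z$ making both factors contribute. The paper packages your explicit combinatorics more tersely---the forward direction is the one-line remark that graph minors of forests are forests, and the reverse direction is phrased as ``$(\ResConfig{u}{v},x,y)$ is obtained from $(\ResConfig{u}{w},x,z)$ by adding potted trees and potted dual trees''---but the underlying verification is exactly the decomposition you describe.
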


\begin{proof}
  We want to show $\langle\hh_s\hh_t((u,x)),(v,y)\rangle=
  \langle\hh_{s\coprod t}((u,x)),(v,y)\rangle$ for all disjoint
  non-empty subsets $s,t$, and for all Khovanov generators $(u,x)$ and
  $(v,y)$. Either can be non-zero only if $v=u\coprod s\coprod t$, so
  we may assume $u$ is disjoint from $s$ and $t$, and $v=u\coprod
  s\coprod t$. Let $w=u\coprod t$.

  The contribution $\langle\hh_{s\coprod t}((u,x)),(v,y)\rangle$ is
  non-zero if and only if $(\ResConfig{u}{v},x,y)$ is a disjoint union
  of trees and dual trees. Since graph minors of forests are also
  forests, for a unique monomial $z$ on $Z(\AssRes{w})$, both
  $(\ResConfig{u}{w},x,z)$ and $(\ResConfig{w}{v},z,y)$ are disjoint
  unions of trees and dual trees. Therefore, $\hh_t((u,x))=(w,z)$ and
  $\hh_s((w,z))=(v,y)$, and hence
  $\langle\hh_s\hh_t((u,x)),(v,y)\rangle\neq 0$.

  Conversely, if $\langle\hh_s\hh_t((u,x)),(v,y)\rangle\neq 0$,
  let $(w,z)=\hh_t((u,x))$ (note, $\hh_t$ evaluated on a Khovanov
  generator can have at most one non-zero term). Then both
  $(\ResConfig{u}{w},x,z)$ and $(\ResConfig{w}{v},z,y)$ are disjoint
  unions of trees and dual trees. In other words, the resolution
  configuration $(\ResConfig{u}{v},x,y)$ is obtained from
  $(\ResConfig{u}{w},x,z)$ by adding a bunch of potted trees and
  potted dual trees. Therefore, $(\ResConfig{u}{v},x,y)$ is a disjoint
  union of trees and dual trees as well, and hence
  $\langle\hh_{s\coprod t}((u,x)),(v,y)\rangle\neq 0$.
\end{proof}

\begin{cor}\label{cor:h-commutes}
For any non-empty subsets $s,t$ of $\Crossings$,
\[
\Commute{\hh_s}{\hh_t}=0.
\]
Therefore, for any non-empty subset $s$ of $\Crossings$,
\(
\Commute{\hh_s}{\hh}=0;
\)
and
\(
\hh^2=0.
\)
\end{cor}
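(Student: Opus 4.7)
The proof is essentially a direct unpacking of \Lemma{h-behavior} together with the fact that we work over $\F_2$, so the plan is short. I would split the verification of $\Commute{\hh_s}{\hh_t}=0$ into the two cases dictated by \Lemma{h-behavior} according to whether $s\cap t=\varnothing$.

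First, in the disjoint case $s\cap t=\varnothing$, \Lemma{h-behavior} gives $\hh_s\hh_t=\hh_{s\coprod t}$ and, by symmetry, $\hh_t\hh_s=\hh_{t\coprod s}=\hh_{s\coprod t}$, so that
\[
\Commute{\hh_s}{\hh_t}=\hh_s\hh_t+\hh_t\hh_s=2\hh_{s\coprod t}=0
\]
in characteristic $2$. In the non-disjoint case $s\cap t\neq\varnothing$, \Lemma{h-behavior} gives $\hh_s\hh_t=0=\hh_t\hh_s$ directly, so the commutator vanishes trivially. This handles the first assertion.

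For the second and third assertions, I would sum over all non-empty subsets. Since $\hh=\sum_{\varnothing\neq t\sbseq\Crossings}\hh_t$, linearity of the commutator yields
\[
\Commute{\hh_s}{\hh}=\sum_{\varnothing\neq t\sbseq\Crossings}\Commute{\hh_s}{\hh_t}=0.
\]
For $\hh^2=0$, expand
\[
\hh^2=\sum_{\varnothing\neq s,t\sbseq\Crossings}\hh_s\hh_t
\]
and observe that the diagonal terms $\hh_s\hh_s$ are zero because $s\cap s=s\neq\varnothing$, again by \Lemma{h-behavior}, while off-diagonal terms pair up as $\hh_s\hh_t+\hh_t\hh_s=\Commute{\hh_s}{\hh_t}=0$ by the first part.

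There is no real obstacle here, since all the content is already encapsulated in \Lemma{h-behavior}; the only subtlety worth mentioning explicitly is the use of $\F_2$-coefficients in the disjoint case, so that $\hh_{s\coprod t}+\hh_{s\coprod t}=0$ without needing any sign bookkeeping.
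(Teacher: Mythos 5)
Your proof is correct and follows exactly the route the paper intends: the corollary is stated without a separate proof precisely because, as you observe, everything reduces immediately to \Lemma{h-behavior} together with the $\F_2$-coefficients. Your case split on $s\cap t$ and the pairing argument for $\hh^2$ are the right unpacking of that implication.
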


\begin{lemma}\label{lem:zoltan-homotopy}
  Let $D$ and $D'$ be two decorated link diagrams, which differ in the
  choice of decorations at precisely the crossings in some subset
  $\mf{S}\sbseq\Crossings$, but are otherwise identical. Let $\dd$ and
  $\dd'$ be the respective endomorphisms of $\Cx$ defined via the
  contribution function $\cont{d}$. Then,
  \[
  \dd'= \dd+\sum_{\varnothing\neq
    s\sbseq\mf{S}}\Commute{\hh_s}{\dd}+\sum_{\substack{\varnothing\neq
    s,t\sbseq\mf{S}\\s\cap t=\varnothing}}\hh_s\dd\hh_t=\dd+\sum_{\varnothing\neq
    s\sbseq\mf{S}}\Commute{\hh_s}{\dd}+\sum_{\varnothing\neq
    s,t\sbseq\mf{S}}\hh_s\dd\hh_t.
  \]
\end{lemma}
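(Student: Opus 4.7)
The plan is to establish the equivalent compact conjugation-type identity
\[
\dd' = (1+\hh_\mf{S})\,\dd\,(1+\hh_\mf{S}), \qquad \hh_\mf{S}\defeq\sum_{\varnothing\neq s\sbseq\mf{S}}\hh_s,
\]
and then proceed by induction on $|\mf{S}|$. Expanding the right-hand side of the displayed compact identity gives $\dd+\sum_{s}\Commute{\hh_s}{\dd}+\sum_{s,t}\hh_s\dd\hh_t$, matching the second form in the statement. The two stated sums on the right-hand side agree because $\hh_s\dd\hh_t=0$ whenever $s\cap t\neq\varnothing$: the image of $\hh_t$ is supported on states containing all of $t$, $\dd$ only enlarges states, and $\hh_s$ vanishes on states already meeting $s$.

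For the base case $\mf{S}=\{c\}$, both sums in the statement collapse (the term $\hh_{\{c\}}\dd\hh_{\{c\}}$ vanishes by the observation just made), reducing the claim to $\dd'=\dd+\Commute{\hh_{\{c\}}}{\dd}$. I would verify this pointwise on pairs $(u,x),(v,y)$ of Khovanov generators. If $c\notin v\sm u$, then the arc at $c$ is absent from $\ResConfig{u}{v}$, so $\cont{d}_D$ and $\cont{d}_{D'}$ agree there; the matrix coefficient of $\Commute{\hh_{\{c\}}}{\dd}$ also vanishes, since any factorization through an intermediate state $w$ with $c\notin w$ requires both $u\sbseq w$ and $v=w\coprod\{c\}$, forcing $c\in v\sm u$. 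If $c\in v\sm u$, then the arc at $c$ is reversed between $D$ and $D'$, and I would run through each of the five \Szabo{} configuration families $A$--$E$, checking that reversing this single arc changes $\cont{d}$ by precisely the contribution of splitting off a Bar-Natan tree or dual tree at $c$ (i.e., a factor of $\hh_{\{c\}}$) from one side of a \Szabo{} configuration computing $\dd$.

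For the inductive step, pick $c\in\mf{S}$, set $\mf{S}_0=\mf{S}\sm\{c\}$, and let $\dd_0$ denote the \Szabo{} differential for the diagram obtained from $D$ by flipping the decorations only on $\mf{S}_0$. Since $\cont{h}$ satisfies the disoriented rule, every $\hh_s$ (and hence $\hh_\mf{S}$) is decoration-independent, so the inductive hypothesis applied to $D\to D_0$ and the base case applied to $D_0\to D'$ give
\[
\dd_0=(1+\hh_{\mf{S}_0})\,\dd\,(1+\hh_{\mf{S}_0}),\qquad \dd'=(1+\hh_{\{c\}})\,\dd_0\,(1+\hh_{\{c\}}).
\]
By Lemma~\ref{lem:h-behavior} and Corollary~\ref{cor:h-commutes}, $\hh_{\{c\}}\hh_s=\hh_{\{c\}\coprod s}$ for every $s\sbseq\mf{S}_0$ and $\Commute{\hh_{\{c\}}}{\hh_{\mf{S}_0}}=0$, so $(1+\hh_{\{c\}})(1+\hh_{\mf{S}_0})=1+\hh_\mf{S}=(1+\hh_{\mf{S}_0})(1+\hh_{\{c\}})$. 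Composing the two displayed identities yields $\dd'=(1+\hh_\mf{S})\dd(1+\hh_\mf{S})$, completing the induction.

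The main obstacle is the base case $|\mf{S}|=1$: a type-by-type combinatorial inspection of how the five \Szabo{} families $A$--$E$ transform under reversing the orientation of a single arc at $c$, matched with insertions of potted trees and potted dual trees at $c$ that compute $\hh_{\{c\}}$. The remaining steps are formal bookkeeping with the multiplicative structure of the $\hh_s$ operators recorded in Lemma~\ref{lem:h-behavior}.
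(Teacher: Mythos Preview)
Your proposal is correct and follows essentially the same inductive skeleton as the paper: induction on $|\mf{S}|$, with the base case $|\mf{S}|=1$ being the combinatorial identity $\dd'=\dd+\Commute{\hh_{\{c\}}}{\dd}$, and the inductive step handled via the multiplicativity of the $\hh_s$ from \Lemma{h-behavior}. The paper dispatches the base case by citing \cite[Theorem~5.4]{Szab-kh-geometric} rather than redoing the type-by-type check you outline, and carries out the inductive step by directly expanding and regrouping the sums. Your repackaging via the conjugation identity $\dd'=(1+\hh_{\mf{S}})\dd(1+\hh_{\mf{S}})$ and the factorization $(1+\hh_{\{c\}})(1+\hh_{\mf{S}_0})=1+\hh_{\mf{S}}$ is a genuine streamlining of that algebra, but the underlying argument is the same.
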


\begin{proof}
  The second and the third expressions agree because $\hh_s\dd\hh_t=0$
  if $s\bigcap t\neq\varnothing$. Therefore, it is enough to prove
  that the first and the second expressions agree. We do it by
  induction on the number of elements in $\mf{S}$.

The base case, when $\mf{S}$ consists of a single crossing, is
essentially \cite[Theorem~5.4]{Szab-kh-geometric}. So we only need the
induction step. Let $D''$ be the decorated link diagram obtained from
$D$ by changing the decoration at a single crossing $c\in\mf{S}$, and
let $\dd''$ be the corresponding endomorphism. From the base case,
\[
\dd''=\dd+\Commute{\hh_{\{c\}}}{\dd},
\]
and from the induction step,
\[
\dd'=\dd''+\sum_{\varnothing\neq
  s\sbseq\mf{S}\sm\{c\}}\Commute{\hh_s}{\dd''}+\sum_{\substack{\varnothing\neq
    s,t\sbseq\mf{S}\sm\{c\}\\s\cap t=\varnothing}}\hh_s\dd''\hh_t.
\]
Combining (and using \Lemma{h-behavior}), we get
\begin{align*}
  \dd'&=\dd+\Commute{\hh_{\{c\}}}{\dd}+\sum_{\varnothing\neq
    s\sbseq\mf{S}\sm\{c\}}\Commute{\hh_s}{\dd+\Commute{\hh_{\{c\}}}{\dd}}
  +\sum_{\substack{\varnothing\neq s,t\sbseq\mf{S}\sm\{c\}\\s\cap
      t=\varnothing}}\hh_s(\dd+\Commute{\hh_{\{c\}}}{\dd})\hh_t\\
  &=\dd+\Commute{\hh_{\{c\}}}{\dd}+\sum_{\varnothing\neq
    s\sbseq\mf{S}\sm\{c\}}(\Commute{\hh_s}{\dd}+\Commute{\hh_{s\coprod\{c\}}}{\dd})+\sum_{\varnothing\neq
    s\sbseq\mf{S}\sm\{c\}}(\hh_s\dd\hh_{\{c\}}+\hh_{\{c\}}\dd\hh_s)\\
  &\qquad{}+\sum_{\substack{\varnothing\neq s,t\sbseq\mf{S}\sm\{c\}\\s\cap
      t=\varnothing}}(\hh_s\dd\hh_t+\hh_{s\coprod\{c\}}\dd\hh_t+\hh_s\dd\hh_{t\coprod\{c\}})\\
  &=\dd+\sum_{\varnothing\neq
    s\sbseq\mf{S}}\Commute{\hh_s}{\dd}+\sum_{\substack{\varnothing\neq
      s,t\sbseq\mf{S}\\s\cap t=\varnothing}}\hh_s\dd\hh_t.\qedhere
\end{align*}
\end{proof}

\begin{lemma}\label{lem:dec-independence}
  Let $D$ and $D'$ be two decorated link diagrams, which might differ
  in the choice of decorations, but are otherwise identical. Let $\dd$
  and $\dd'$ be the respective endomorphisms of $\Cx$. Fix $k\geq
  1$. If
  \[
  \langle\Commute{\hh}{\dd}((u,x)),(v,y)\rangle=0
  \]
  for all pairs of Khovanov generators $(u,x)$ and $(v,y)$ with
  $\card{v}-\card{u}< k$, then
  \[
  \langle\Commute{\hh}{\dd}((u,x)),(v,y)\rangle=\langle\Commute{\hh}{\dd'}((u,x)),(v,y)\rangle
  \] 
  for all pairs of Khovanov generators $(u,x)$ and $(v,y)$ with
  $\card{v}-\card{u}= k$.
\end{lemma}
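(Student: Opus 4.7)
My plan is to leverage \Lemma{zoltan-homotopy}. Letting $\mf{S}\sbseq\Crossings$ be the set of crossings at which the decorations of $D$ and $D'$ differ, that lemma expresses
\[
\dd' = \dd + \sum_{\varnothing\neq s\sbseq\mf{S}}\Commute{\hh_s}{\dd} + \sum_{\varnothing\neq s,t\sbseq\mf{S}}\hh_s\dd\hh_t,
\]
so $\Commute{\hh}{\dd'} - \Commute{\hh}{\dd} = \Commute{\hh}{\dd'-\dd}$, and the problem reduces to showing that this bracket has vanishing matrix entries on all pairs of Khovanov generators with homological grading difference exactly $k$.

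The next step is to simplify $\Commute{\hh}{\dd'-\dd}$ using the relation $\Commute{\hh}{\hh_s}=0$ provided by \Corollary{h-commutes}. A direct expansion over $\F_2$, in which one migrates each $\hh_s$ past $\hh$ using $\hh\hh_s=\hh_s\hh$, yields the two identities
\begin{align*}
\Commute{\hh}{\Commute{\hh_s}{\dd}} &= \Commute{\hh_s}{\Commute{\hh}{\dd}}, \\
\Commute{\hh}{\hh_s\dd\hh_t} &= \hh_s\Commute{\hh}{\dd}\hh_t.
\end{align*}
Summing these over $s$ and $(s,t)$ respectively gives
\[
\Commute{\hh}{\dd'-\dd} = \sum_{\varnothing\neq s\sbseq\mf{S}}\Commute{\hh_s}{\Commute{\hh}{\dd}} + \sum_{\varnothing\neq s,t\sbseq\mf{S}}\hh_s\Commute{\hh}{\dd}\hh_t.
\]
The crucial feature is that every term now contains $\Commute{\hh}{\dd}$ sandwiched between a positive number of $\hh_s$-type factors.

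Finally I invoke the inductive hypothesis via a grading count. Each $\hh_s$ strictly raises the homological grading by $\card{s}\geq 1$, so in any matrix coefficient $\langle\hh_s\Commute{\hh}{\dd}((u,x)),(v,y)\rangle$, $\langle\Commute{\hh}{\dd}\hh_s((u,x)),(v,y)\rangle$, or $\langle\hh_s\Commute{\hh}{\dd}\hh_t((u,x)),(v,y)\rangle$ with $\card{v}-\card{u}=k$, the factor $\Commute{\hh}{\dd}$ is evaluated between Khovanov generators whose homological grading difference is at most $k-\card{s}$, $k-\card{s}$, or $k-\card{s}-\card{t}$, respectively---in every case strictly less than $k$. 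By hypothesis, each such inner coefficient vanishes, and summing shows $\Commute{\hh}{\dd'-\dd}=0$ on the required pairs.

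The main obstacle is purely computational: verifying the two commutator identities above by expanding into four-term (respectively, two-term) $\F_2$-combinations and repeatedly applying $\hh\hh_s=\hh_s\hh$. Once these identities are in hand, the rest is a clean bookkeeping of homological gradings through the (at most) triple compositions that appear.
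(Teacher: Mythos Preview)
Your proposal is correct and follows essentially the same approach as the paper's own proof: apply \Lemma{zoltan-homotopy}, use \Corollary{h-commutes} to rewrite $\Commute{\hh}{\dd'-\dd}$ as $\sum_s\Commute{\hh_s}{\Commute{\hh}{\dd}}+\sum_{s,t}\hh_s\Commute{\hh}{\dd}\hh_t$, and then observe that in every term the inner $\Commute{\hh}{\dd}$ is evaluated across a homological gap strictly less than $k$, whence the hypothesis kills each matrix coefficient. The paper spells out the last step by explicitly inserting intermediate generators $(w,z)$ (or $(w_1,z_1),(w_2,z_2)$) and summing, but your grading-count phrasing is equivalent.
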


\begin{proof}
Let $\mf{S}$ be the set of crossings where the decorations of $D$ and
$D'$ differ. By \Lemma{zoltan-homotopy},
\[
\dd'-\dd=\sum_{\varnothing\neq
  s\sbseq\mf{S}}\Commute{\hh_s}{\dd}+\sum_{\varnothing\neq
  s,t\sbseq\mf{S}}\hh_s\dd\hh_t,
\]
and hence (with the aid of \Corollary{h-commutes}),
\begin{align*}
\Commute{\hh}{\dd'-\dd}&=\sum_{\varnothing\neq
  s\sbseq\mf{S}}\Commute{\hh}{\Commute{\hh_s}{\dd}}+\sum_{\varnothing\neq
  s,t\sbseq\mf{S}}\Commute{\hh}{\hh_s\dd\hh_t}\\
&=\sum_{\varnothing\neq
  s\sbseq\mf{S}}\Commute{\hh_s}{\Commute{\hh}{\dd}}+\sum_{\varnothing\neq
  s,t\sbseq\mf{S}}\hh_s\Commute{\hh}{\dd}\hh_t.
\end{align*}
Therefore, for any $(u,x)$ and $(v,y)$ with $\gr_h((v,y))-\gr_h((u,x))=\card{v}-\card{u}=k$,
\begin{align*}
  \langle\Commute{\hh}{\dd'-\dd}((u,x)),(v,y)\rangle&=\sum_{\varnothing\neq
    s\sbseq\mf{S}}\langle\Commute{\hh_s}{\Commute{\hh}{\dd}}((u,x)),(v,y)\rangle\\
  &\qquad{}+\sum_{\varnothing\neq
    s,t\sbseq\mf{S}}\langle\hh_s\Commute{\hh}{\dd}\hh_t((u,x)),(v,y)\rangle\\
  &=\sum_{\substack{s,(w,z)\\\varnothing\neq
    s\sbseq\mf{S}\\\mathclap{\card{w}-\card{u}=k-\card{s}}}}\langle\hh_s((w,z)),(v,y)\rangle\langle\Commute{\hh}{\dd}((u,x)),(w,z)\rangle\\
&\qquad{}+\sum_{\substack{s,(w,z)\\\varnothing\neq
    s\sbseq\mf{S}\\\mathclap{\card{v}-\card{w}=k-\card{s}}}}\langle\Commute{\hh}{\dd}((w,z)),(v,y)\rangle\langle\hh_s((u,x)),(w,z)\rangle\\
  &\mathllap{{}+\sum_{\substack{\mathclap{s,t,(w_1,z_1),(w_2,z_2)}\\\varnothing\neq
    s,t\sbseq\mf{S}\\\mathclap{\card{w_1}-\card{u}=\card{t}}\\\mathclap{\card{w_2}-\card{w_1}=k-\card{s}-\card{t}}}}}\langle\hh_s((w_2,z_2)),(v,y)\rangle\langle\Commute{\hh}{\dd}((w_1,z_1)),(w_2,z_2)\rangle\langle\hh_t((u,x)),(w_1,z_1)\rangle\\
  &=0
\end{align*}
by the hypothesis.
\end{proof}

Let us now define five families of labeled resolution configurations
that will be of relevance very soon.

\begin{definition}\label{def:combined-config}
  A Type-$\al$ configuration (\Figure{a} after ignoring the
  orientations) is obtained from a Type-A configuration by forgetting
  the orientation of the arcs, and adding some number (possibly zero)
  of potted dual trees on the two initial circles. A Type-$\be$
  configuration (\Figure{b} after ignoring the orientations) is the
  dual of a Type-$\al$ configuration; therefore, it is obtained from a
  Type-B configuration by forgetting the orientation of the arcs, and
  adding potted trees on the initial circles. A Type-$\ga$
  configuration (\Figure{c} after ignoring the orientations) is
  obtained from a Type-C configuration by forgetting the orientation
  of the arcs, and adding potted dual trees on the initial circle. A
  Type-$\de$ configuration (\Figure{d} after ignoring the
  orientations) is the dual of a Type-$\ga$ configuration; therefore,
  it is obtained from a Type-D configuration by forgetting the
  orientation of the arcs, and adding potted trees on the initial
  circles. A Type-$\varepsilon$ configuration (\Figure{e} after
  ignoring the orientations) is obtained by adding potted trees and
  potted dual trees on a single circle, and requiring that the index
  of the resulting resolution configuration be at least one.
\end{definition}

\captionsetup[subfloat]{width={0.4\textwidth}}
\begin{figure}
  \centering 
  \subfloat[Type-$\al$
  configuration.]{\label{fig:a}\begin{overpic}[width=0.4\textwidth]{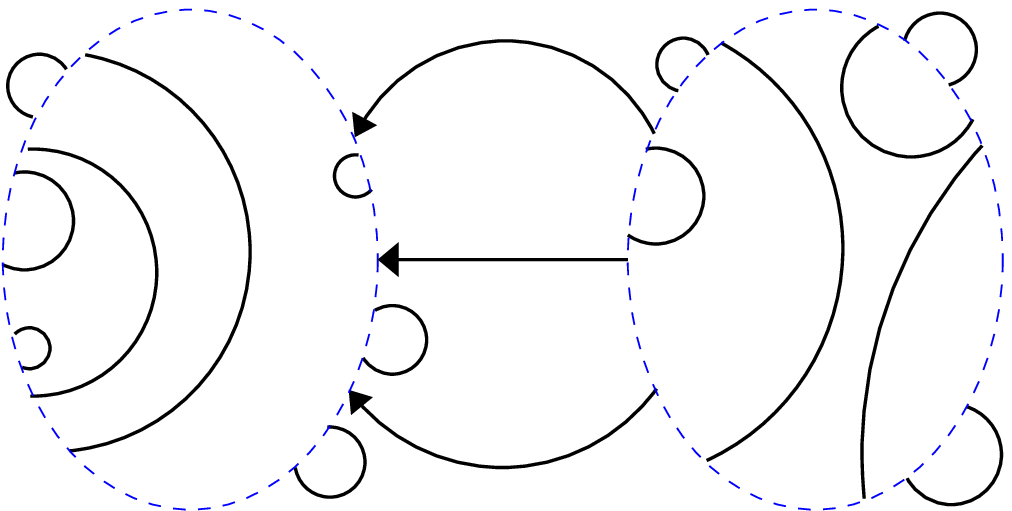}\put(19,1){\tiny
        $0$}\put(79,1){\tiny $0$}\end{overpic}}
  \hspace{0.1\textwidth}
  \subfloat[Type-$\be$
  configuration.]{\label{fig:b}\begin{overpic}[width=0.4\textwidth]{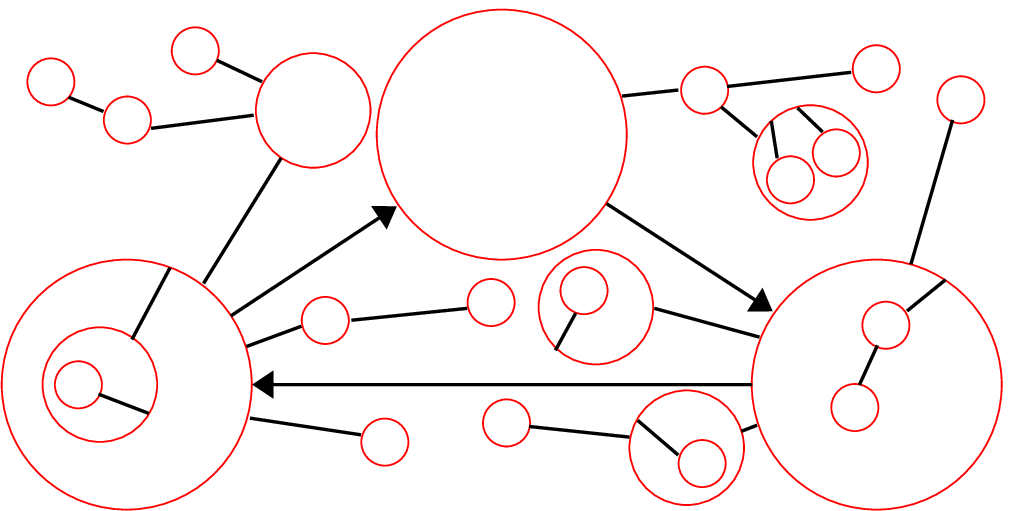}\put(5,5){\tiny
        $1$}\put(12,1){\tiny $1$}\put(6,11){\tiny $1$}\put(4,41){\tiny
        $1$}\put(12,38){\tiny $1$}\put(18,45){\tiny
        $1$}\put(32,35){\tiny $1$}\put(31,18){\tiny
        $1$}\put(37,6){\tiny $1$}\put(48,19){\tiny
        $1$}\put(50,8){\tiny $1$}\put(57,20){\tiny
        $1$}\put(58,15){\tiny $1$}\put(69,41){\tiny
        $1$}\put(69,3){\tiny $1$}\put(63,3){\tiny
        $1$}\put(73,34){\tiny $1$}\put(78,32){\tiny
        $1$}\put(82,34){\tiny $1$}\put(87,43){\tiny
        $1$}\put(84,9){\tiny $1$}\put(87,17){\tiny
        $1$}\put(95,40){\tiny $1$} \put(88,1){\tiny
        $1$}\put(49,27){\tiny $1$}\end{overpic}}
  \hspace{0.1\textwidth}
  \subfloat[Type-$\ga$
  configuration.]{\label{fig:c}\begin{overpic}[width=0.4\textwidth]{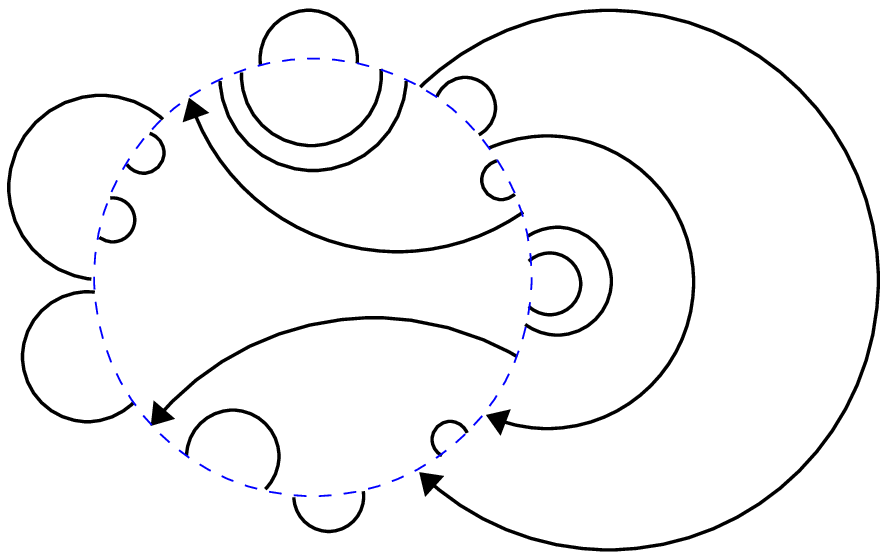}\put(29,8){\tiny
        $0$}\end{overpic}}
  \hspace{0.1\textwidth} 
  \subfloat[Type-$\de$
  configuration.]{\label{fig:d}\begin{overpic}[width=0.4\textwidth]{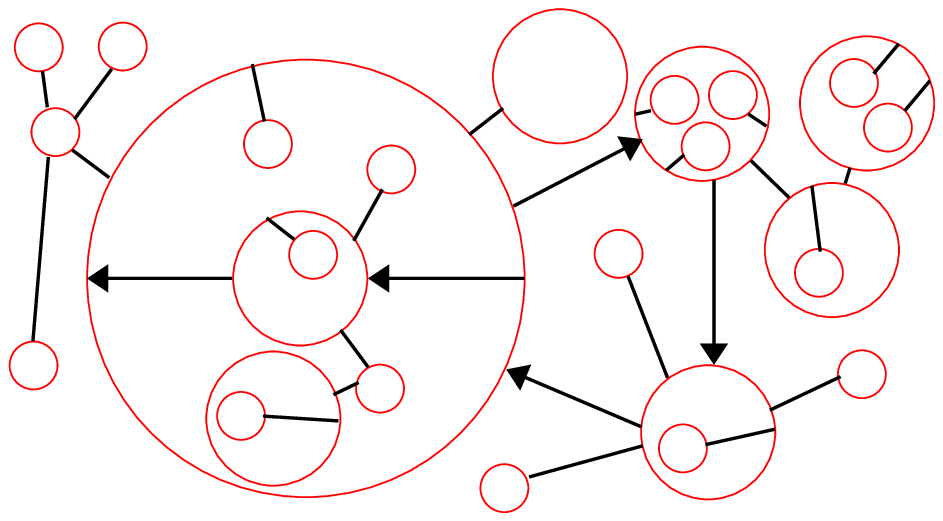}\put(4,18){\tiny
        $1$}\put(6,41){\tiny $1$}\put(4,50){\tiny
        $1$}\put(12,50){\tiny $1$}\put(15,15){\tiny
        $1$}\put(27,40){\tiny $1$} \put(23,13){\tiny $1$}
      \put(26,8){\tiny $1$} \put(26,23){\tiny $1$} \put(31,29){\tiny
        $1$} \put(38,16){\tiny $1$} \put(38,38){\tiny $1$}
      \put(50,6){\tiny $1$} \put(54,42){\tiny $1$} \put(62,29){\tiny
        $1$} \put(68,10){\tiny $1$} \put(73,16){\tiny $1$}
      \put(86,17){\tiny $1$} \put(81,27){\tiny $1$} \put(88,29){\tiny
        $1$} \put(81,44){\tiny $1$} \put(88,42){\tiny $1$}
      \put(85,46){\tiny $1$} \put(70,48){\tiny $1$} \put(70,40){\tiny
        $1$} \put(67,45){\tiny $1$} \put(72,45){\tiny $1$}
\end{overpic}}
  \hspace{0.1\textwidth}
  \subfloat[Type-$\varepsilon$
  configuration.]{\label{fig:e}\begin{overpic}[width=0.4\textwidth]{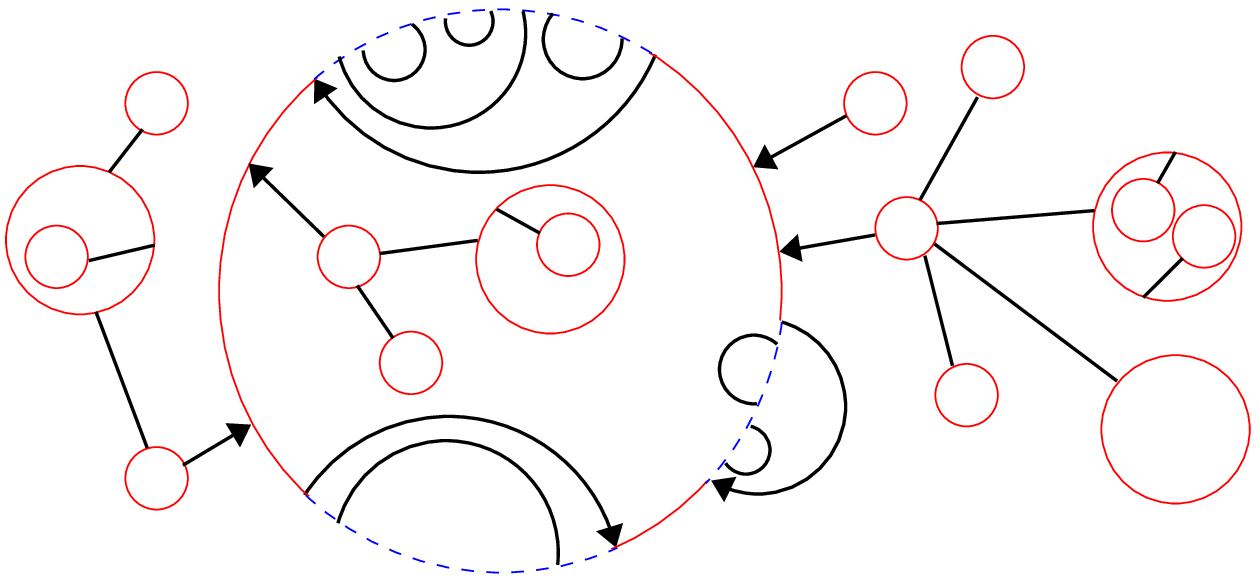}
      \put(39,1){\tiny $0$} \put(11,7){\tiny $1$} \put(3,24){\tiny
        $1$} \put(5,29){\tiny $1$} \put(11,36){\tiny $1$}
      \put(27,24){\tiny $1$} \put(32,16){\tiny $1$} \put(43,20){\tiny
        $1$} \put(44,25){\tiny $1$} \put(69,36){\tiny $1$}
      \put(71,27){\tiny $1$} \put(76,13){\tiny $1$} \put(78,39){\tiny
        $1$} \put(93,6){\tiny $1$} \put(95,34){\tiny $1$}
      \put(90,28){\tiny $1$} \put(95,26){\tiny $1$}
    \end{overpic}}
  \caption{After ignoring the orientations of the arcs, these are the
    labeled resolution configurations from
    \Definition{combined-config}.}\label{fig:combined-configurations}
\end{figure}

\begin{lemma}\label{lem:main-enumeration}
  If $(u,x)$, $(v,y)$ and $(w,z)$ are Khovanov generators such that
  $\langle\dd((u,x)),(w,z)\rangle\allowbreak\neq 0$ and
  $\langle\hh((w,z)),(v,y)\rangle\neq 0$, then,
  $(\ResConfig{u}{v},x,y)$, after forgetting the orientations of the
  arcs, is equivalent to a disjoint union of trees, dual trees and
  exactly one configuration from one of the five families in
  \Definition{combined-config}.
\end{lemma}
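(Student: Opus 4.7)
The plan is to analyze the combined configuration $(\ResConfig{u}{v}, x, y)$ by decomposing its arcs into $A_\dd \sqcup A_\hh$, where $A_\dd = A(\ResConfig{u}{w})$ comes from the $\dd$-step and $A_\hh = A(\ResConfig{w}{v})$ from the $\hh$-step. By hypothesis, $(\ResConfig{u}{w}, x, z)$, when restricted to its active part $R_\dd$, is equivalent as an oriented labeled configuration to one from the \Szabo{} families Type-A through Type-E; outside this active region, the passive circles of $\dd$ agree as subsets of both $\AssRes{u}$ and $\AssRes{w}$. Also by hypothesis, $(\ResConfig{w}{v}, z, y)$ is a disjoint union of trees and dual trees.

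The first step is to classify each connected component of $(\ResConfig{w}{v}, z, y)$ according to whether its initial circles (which live in $\AssRes{w}$) sit entirely in the passive region of $\dd$, or some of them coincide with ending circles of $R_\dd$. Components of the first kind, since the passive circles do not move between $\AssRes{u}$ and $\AssRes{w}$, appear unchanged as disjoint trees and dual trees in $(\ResConfig{u}{v}, x, y)$. Components of the second kind attach to $R_\dd$, and the heart of the argument is to show that, after re-expressing everything in terms of the starting circles $\AssRes{u}$ and forgetting arc orientations, the union of $R_\dd$ with all such attached components reassembles into exactly one configuration from \Definition{combined-config}.

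The case analysis then runs in parallel with the five types of \Definition{combined-config}: Type-A absorbs potted dual trees on its two initial circles and produces Type-$\al$; Type-B absorbs potted trees and produces Type-$\be$; Types C and D yield Type-$\ga$ and Type-$\de$ respectively, by the mirror-duality that relates them to A and B; and Type-E, whose special starting circle absorbs both potted trees and potted dual trees, produces Type-$\vep$. The degenerate possibility of no attached $\hh$-components is already covered by the ``possibly zero'' clauses in the definitions. The ``exactly one'' uniqueness follows because $R_\dd$ is a single connected configuration and every attachment glues to it to form a single combined piece. The main obstacle will be the combinatorial bookkeeping in each of the five sub-cases, especially Type-E, where self-arcs on the special circle together with the attached $\hh$-pieces must be carefully repackaged as potted trees and potted dual trees on a single circle in order to match the exact shape demanded by Type-$\vep$.
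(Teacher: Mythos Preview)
Your overall strategy matches the paper's: decompose into the active $\dd$-piece plus the $\hh$-components that touch its ending circles, and run a case analysis on Types A--E. However, there is a genuine gap in the Type-B case.

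You assert that ``Type-B absorbs potted trees and produces Type-$\be$.'' This is not always true. A Type-B configuration has exactly two active ending circles, both appearing in the intermediate monomial $z$. A single tree component of $(\ResConfig{w}{v},z,y)$ may have several starting circles, and nothing prevents \emph{both} ending circles of the Type-B piece from occurring among them. When that happens the tree connects the two ending circles, and in the combined configuration $(\ResConfig{u}{v},x,y)$ there is now only one ending circle. The result is not a Type-$\be$ configuration but a Type-$\de$ configuration. The paper handles exactly this bifurcation: if the attached $\hh$-pieces do not connect the two ending circles one gets Type-$\be$, and if they do one gets Type-$\de$. Your description in terms of ``potted trees'' silently assumes each attached tree meets only one ending circle of the Type-B piece, which is precisely the hypothesis that fails here.

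Two smaller issues. In the Type-A case the attached dual trees may, when re-expressed on the starting circles, contribute additional parallel arcs between the two initial circles, not only potted dual trees; the conclusion Type-$\al$ is still correct, but your bookkeeping is incomplete. And your appeal to ``mirror-duality that relates them to A and B'' for Types C and D is not right: Type-C is mirror-dual to Type-D, not to Type-A or Type-B, and in any case dualizing reverses the order of $\dd$ and $\hh$, so it does not reduce these cases to the earlier ones within this lemma. The paper handles C and D by direct inspection; you should too.
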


\begin{proof}
  The proof is a fairly straightforward case analysis. Since
  $\langle\dd((u,x)),(w,z)\rangle\allowbreak\neq 0$, the active part
  of the resolution configuration $(\ResConfig{u}{w},x,z)$ must be in
  one of the five families described in \Section{construction} or
  \Figure{zoltan-configurations}, and since
  $\langle\hh((w,z)),(v,y)\rangle\neq 0$, the resolution configuration
  $(\ResConfig{w}{v},z,y)$ must be a disjoint union of trees and dual
  trees. Therefore, we need to show that the union of the following is
  a configuration from one of the five families in
  \Definition{combined-config}:
  \begin{enumerate}[label=(\roman*)]
  \item the active part of $(\ResConfig{u}{w},x,z)$;
  \item the arcs of $\ResConfig{w}{v}$ that are in the same connected
    component (of $\ResConfig{w}{v}$) as some active ending circle of
    $\ResConfig{u}{w}$; and
  \item the starting circles of $(\ResConfig{w}{v},z,y)$ that are
    passive as ending circles of $\ResConfig{u}{w}$ but are in the
    same connected component (of $\ResConfig{w}{v}$) as some active
    ending circle of $\ResConfig{u}{w}$.
  \end{enumerate}

  If the active part of $(\ResConfig{u}{w},x,z)$ is a Type-A
  configuration, then adding the relevant portion from $(\ResConfig{w}{v},z,y)$
  has the effect of adding some (possibly zero) more parallel arcs
  between the two active starting circles of $\ResConfig{u}{w}$,
  followed by adding some potted dual trees; consequently, we get a
  Type-$\al$ configuration.

  If the active part of $(\ResConfig{u}{w},x,z)$ is a Type-B
  configuration, then adding the relevant portion from
  $(\ResConfig{w}{v},z,y)$ could achieve one of the following two
  things. If the new arcs and circles do not connect the two active
  ending circles of $\ResConfig{u}{w}$, then we get a Type-$\be$
  configuration; and if the new arcs and circles do connect the two
  active ending circles of $\ResConfig{u}{w}$, then we get a
  Type-$\de$ configuration.

  If the active part of $(\ResConfig{u}{w},x,z)$ is a Type-C
  configuration, then adding the relevant portion from $(\ResConfig{w}{v},z,y)$
  has the effect of adding some (possibly zero) number of arcs
  parallel to inside or outside arcs of $\ResConfig{u}{w}$, followed
  by adding some potted dual trees; therefore, we get a Type-$\ga$
  configuration.

  If the active part of $(\ResConfig{u}{w},x,z)$ is a Type-D
  configuration, then adding the relevant portion from $(\ResConfig{w}{v},z,y)$
  is simply the addition of some potted trees, and we get a
  Type-$\de$ configuration.

  Finally, if the active part of $(\ResConfig{u}{w},x,z)$ is a Type-E
  configuration, then adding the relevant portion from $(\ResConfig{w}{v},z,y)$
  is same as adding some potted trees and potted dual trees, and
  therefore, we get a Type-$\varepsilon$ configuration.
\end{proof}

\begin{lemma}\label{lem:subs-enumeration}
  If $(u,x)$, $(v,y)$ and $(w,z)$ are Khovanov generators such that
  $\langle\hh((u,x)),(w,z)\rangle\allowbreak\neq 0$ and
  $\langle\dd((w,z)),(v,y)\rangle\neq 0$, then,
  $(\ResConfig{u}{v},x,y)$, after forgetting the orientations of the
  arcs, is equivalent to a disjoint union of trees, dual trees and
  exactly one configuration from one of the five families in
  \Definition{combined-config}.
\end{lemma}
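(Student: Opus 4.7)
My proof plan follows the same case analysis as \Lemma{main-enumeration}, with the roles of $\dd$ and $\hh$ exchanged. From $\langle\hh((u,x)),(w,z)\rangle \neq 0$, the configuration $(\ResConfig{u}{w}, x, z)$ is a disjoint union of trees and dual trees; from $\langle\dd((w,z)),(v,y)\rangle \neq 0$, the active part of $(\ResConfig{w}{v}, z, y)$ is equivalent to one of the Type-A through Type-E configurations from \Figure{zoltan-configurations}. Trees and dual trees in $(\ResConfig{u}{w}, x, z)$ that are disjoint from the active part of $\ResConfig{w}{v}$ persist in $(\ResConfig{u}{v}, x, y)$ as tree/dual tree components, so the analysis reduces to understanding how trees and dual trees of $(\ResConfig{u}{w}, x, z)$ can attach, via their ending circles, to the active starting circles of $\ResConfig{w}{v}$.

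The combinatorial crux is that tree ending circles are labeled $1$ (in $z$) while dual tree ending circles are labeled $0$. Consequently, only dual trees can attach to Type-A and Type-C (whose active starting circles are labeled $0$); only trees can attach to Type-B and Type-D (labeled $1$); and for Type-E, a dual tree attaches to the special starting circle (labeled $0$) while a distinct tree attaches to each non-special starting circle (labeled $1$), since each tree has a single ending circle. I then verify in each case that the combined configuration, after forgetting orientations, lies in the appropriate family of \Definition{combined-config}: Type-A yields Type-$\al$, Type-B yields Type-$\be$, Type-C yields Type-$\ga$, Type-D yields Type-$\de$, and Type-E yields Type-$\varepsilon$. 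Certain cases may require a sub-case split, parallel to the Type-B/Type-$\be$ vs.\ Type-$\de$ dichotomy in \Lemma{main-enumeration}: for instance, if a single dual tree has both of its ending circles equal to the two active starting circles of a Type-A, the combined object may fall into a different family than if two separate dual trees attach.

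The main obstacle is the careful bookkeeping in the case analysis, particularly for Type-E, where one must verify that the mixed-label attachment of trees to non-special starting circles together with a dual tree attachment to the special starting circle does yield a valid Type-$\varepsilon$ (including the index-at-least-one condition), and for Type-A, where the potential sub-case must be correctly classified. A cleaner alternative would exploit the duality rule satisfied by both $\cont{h}$ and $\cont{d}$: the mirror-dual operation reverses the composition order $\hh\dd \leftrightarrow \dd\hh$, swaps trees with dual trees, and preserves the five families of \Definition{combined-config} (with Type-$\al \leftrightarrow$ Type-$\be$, Type-$\ga \leftrightarrow$ Type-$\de$, and Type-$\varepsilon$ self-dual), so that \Lemma{subs-enumeration} can be deduced directly from \Lemma{main-enumeration} without redoing the case analysis.
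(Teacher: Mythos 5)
Your primary plan (redoing the case analysis with $\hh$ and $\dd$ exchanged) has real gaps that you flag but do not close. The case ``Type-A yields Type-$\al$'' is not always true: when a single dual tree from $(\ResConfig{u}{w},x,z)$ has two of its ending circles coinciding with the two active starting circles of the Type-A in $\ResConfig{w}{v}$, the composite has a \emph{single} starting circle (the dual tree's root), so it cannot be a Type-$\al$ configuration --- it lands in the Type-$\ga$ family instead. This is exactly the mirror of the Type-B\,$\rightsquigarrow$\,Type-$\be$/Type-$\de$ dichotomy treated explicitly in the paper's proof of \Lemma{main-enumeration}, so a correct case analysis for the present lemma needs the analogous split. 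More broadly, your classification by circle labels (``only dual trees attach to Type-A/C, only trees to Type-B/D'') is a necessary but not sufficient criterion: attaching a dual tree to an active starting circle of a Type-C, say, changes which circle is the ``initial circle'' of the Type-$\ga$, and you would need to verify that the resulting object really is in the family. You acknowledge these loose ends (``may require a sub-case split,'' ``the main obstacle is the careful bookkeeping'') but leave them unresolved, so as written the case-analysis route is not a proof.

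However, the ``cleaner alternative'' you mention at the end \emph{is} the paper's actual proof, and it is complete: since both $\cont{d}$ and $\cont{h}$ satisfy the duality rule, and taking mirror-duals reverses the composition order, $(\ResConfig{u}{v},x,y)$ must be mirror-dual to a configuration covered by \Lemma{main-enumeration}; one then just observes that the five families of \Definition{combined-config} are closed under duality (Type-$\al\leftrightarrow$ Type-$\be$, Type-$\ga\leftrightarrow$ Type-$\de$, Type-$\varepsilon$ self-dual), and that trees and dual trees swap. So you have correctly identified the right argument --- you should promote it to your main proof and drop the case analysis, whose correctness you would otherwise have to re-establish from scratch.
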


\begin{proof}
  This follows immediately from \Lemma{main-enumeration}. Since both
  the contribution functions $\cont{d}$ and $\cont{h}$ satisfy the
  duality rule, the resolution configuration $(\ResConfig{u}{v},x,y)$
  must be dual to some resolution configuration from
  \Lemma{main-enumeration}. Therefore, it must be dual to a disjoint
  union of trees, dual trees, and exactly one configuration from
  \Definition{combined-config}. However, the resolution configurations
  from \Definition{combined-config} are closed under taking duals;
  namely, Type-$\al$ configurations are dual to Type-$\be$
  configurations, Type-$\ga$ configurations are dual to Type-$\de$
  configurations, and Type-$\varepsilon$ configurations are self-dual.
\end{proof}

We are finally ready to embark upon the proof of our main result for
this section.
\begin{proof}[Proof of \Proposition{is-differential}]
We want to show $(\dd+\hh)^2=0$ on $\Cx$. Since $\dd$ increases
$\gr_{\diff}$ by one, and $\hh$ preserves it, this is equivalent to
the following three statements, arranged in the increasing order of
difficulty:
\[
\hh^2=0\qquad\qquad\Commute{\hh}{\dd}=0\qquad\qquad\dd^2=0.
\]
\Corollary{h-commutes} already takes care of the easiest of these
three statements; and the hardest is proved in
\cite[Section~6]{Szab-kh-geometric}. Therefore, we will merely prove
$\Commute{\hh}{\dd}=0$. Certain formulas for certain counts take a
simpler form if we write them for the equivalent statement
\[
\Commute{\hh+\Id}{\dd}=0,
\]
and so that is what we will prove.

We prove the above equation by induction on the index. The base case
when the index is zero, is vacuous; hence we only prove the induction
step. We assume
\[
\langle\Commute{\hh+\Id}{\dd}((u,x)),(v,y)\rangle=0
\]
for all pairs of Khovanov generators $(u,x)$ and $(v,y)$ with
$\gr_h((v,y))-\gr_h((u,x))<k$, and for any decoration of the original
link diagram $D$.  Then fix Khovanov generators $(u,x)$ and $(v,y)$
with $u\sbs v$ and $\gr_h((v,y))-\gr_h((u,x))=k$, and fix some
decoration on $D$. We will prove that 
\[
\langle\Commute{\hh+\Id}{\dd}((u,x)),(v,y)\rangle=0
\]
holds for that decoration.

Thanks to \Lemma{dec-independence}, it is enough to prove this for
some decoration; that is, after fixing $(u,x)$ and $(v,y)$, we are
free to choose our decoration. By \Lemma{main-enumeration} and
\Lemma{subs-enumeration}, we may assume that $(\ResConfig{u}{v},x,y)$
is a disjoint union of trees, dual trees, and exactly one
configuration from \Definition{combined-config}. These configurations
are shown in \Figure{combined-configurations}, with some of the arcs
being oriented; assume the number of oriented arcs is $\ell$. Choose
some decoration on $D$ so that those $\ell$ arcs are oriented as in
\Figure{combined-configurations}.  We will finish the proof by
analyzing each of the configurations from
\Figure{combined-configurations}.

If $(\ResConfig{u}{v},x,y)$ is equivalent to a disjoint union of
trees, dual trees and the configuration from \Figure{a}, then it
breaks up in the following ways:
\begin{itemize}
\item We first use all the arcs except the $\ell$ oriented ones to get
  a configuration that contributes to $(\hh+\Id)$; and we follow it by
  the $\ell$ oriented arcs to get a Type-A configuration which
  contributes to $\dd$.
\item We use a non-empty subset of the $\ell$ oriented arcs to get a
  Type-A configuration that contributes to $\dd$; and we follow it by
  the rest of the arcs to get a configuration that contributes to
  $(\hh+\Id)$.
\end{itemize}
Therefore, we get a total of $1+(2^{\ell}-1)$ contributions to
$\Commute{\hh}{\dd}$, and hence are done with this case.

If $(\ResConfig{u}{v},x,y)$ is equivalent to a disjoint union of
trees, dual trees and the configuration from \Figure{b}, then it is
dual to the previous case, and therefore follows from the previous
case since all the relevant contribution functions satisfy the duality
rule.

Next assume that $(\ResConfig{u}{v},x,y)$ is equivalent to a disjoint
union of trees, dual trees and the configuration from \Figure{c};
assume the number of oriented arcs inside (respectively, outside) the
starting circle is $\ell_1$ (respectively, $\ell_2$) with
$\ell=\ell_1+\ell_2$. The configuration breaks up in the following
ways:
\begin{itemize}
\item We first use all except the $\ell_1$ oriented arcs inside to get
  a configuration that contributes to $(\hh+\Id)$; and we follow it by
  the $\ell_1$ oriented arcs to get a Type-A configuration that
  contributes to $\dd$.
\item We first use all except the $\ell_2$ oriented arcs outside to get
  a configuration that contributes to $(\hh+\Id)$; and we follow it by
  the $\ell_2$ oriented arcs to get a Type-A configuration that
  contributes to $\dd$.
\end{itemize}
Note, we have chosen the orientations of the $\ell$ arcs in such a way
that we do not have to encounter Type-C configurations. Therefore, we
get a total of $1+1$ contributions to $\Commute{\hh}{\dd}$, and once
again we are done.

If $(\ResConfig{u}{v},x,y)$ is equivalent to a disjoint union of
trees, dual trees and the configuration from \Figure{d}, then it is
dual to the previous case, and therefore follows from it. 

Finally assume that $(\ResConfig{u}{v},x,y)$ is equivalent to a
disjoint union of trees, dual trees and the configuration from
\Figure{e}. Assume the number of potted trees on the special starting
circle is $\ell_1$, and the number of potted dual trees on the special
starting circle is $\ell_2$, with $\ell_1+\ell_2=\ell\geq 1$. 
There are the following subcases.
\begin{enumerate}
\item $\ell_1\neq 0$, $\ell_2=0$. The configuration breaks up in the
  following ways:
  \begin{itemize}
  \item We first use all except the $\ell_1$ oriented arcs to get a
    configuration that contributes to $(\hh+\Id)$; and we follow it by
    the $\ell_1$ oriented arcs to get a Type-E configuration that
    contributes to $\dd$.
  \item We use a non-empty subset of the $\ell_1$ oriented arcs to get a
    Type-E configuration that contributes to $\dd$; and we follow it by
    the rest of the arcs to get a configuration that contributes to
    $(\hh+\Id)$.
  \end{itemize}
\item $\ell_1=0$, $\ell_2\neq 0$. The configuration breaks up in the
  following ways:
  \begin{itemize}
  \item We first use all the $\ell_2$ oriented arcs to get a Type-E
    configuration that contributes to $\dd$; and we follow it by the
    rest of the arcs to get a configuration that contributes to
    $(\hh+\Id)$.
  \item We use all the non-oriented arcs and a proper subset of the
    $\ell_2$ oriented arcs to get a configuration that contributes to
    $(\hh+\Id)$; and we follow it by the remaining arcs to get a
    Type-E configuration that contributes to $\dd$.
  \end{itemize}
\item $\ell_1\neq 0$, $\ell_2\neq 0$. The configuration breaks up in the
  following ways:
  \begin{itemize}
  \item We use all the non-oriented arcs and all of the $\ell_2$
    oriented arcs to get a configuration that contributes to
    $(\hh+\Id)$; and we follow it by the rest of the arcs to get a
    Type-E configuration that contributes to $\dd$.
  \item We use all the $\ell_2$ oriented arcs to get a Type-E
    configuration that contributes to $\dd$; and we follow it by the
    rest of the arcs to get a configuration that contributes to
    $(\hh+\Id)$.
  \end{itemize}
\end{enumerate}
Therefore, we have the following number of contributions to $\Commute{\hh}{\dd}$,
\[
\begin{tabular}{l|cc}
&$\ell_1=0$&$\ell_1\geq 1$\\ \hline
$\ell_2=0$&&$1+(2^{\ell_1}-1)$\\
$\ell_2\geq 1$&$1+(2^{\ell_2}-1)$&$2$
\end{tabular}
\]
thus concluding the proof.
\end{proof}

\section{Invariance}\label{sec:invariance}

We now turn to invariance. The master invariant that we study is the
bigraded chain homotopy type of the inclusion
$\iota\from\OurCx^{-}\{1\}\into \OurCx$ over the ring $\F_2[H,W]$. All the
other variants that we have considered can easily be recovered from
this variant. For example, if one wishes to recover the plus version
of the filtered \Szabo{} geometric chain complex, one can obtain it by
taking the mapping cone of the inclusion $\OurCx^{-}\{1\}\into \OurCx$,
then setting $H=0$ and $W=1$, and then shifting the quantum grading by $1$.
\[
\SzCx^+\simeq \Cone(\iota)/\{H=0,W=1\}\{1\}.
\]

\begin{definition}
  Consider the bigraded polynomial ring $\F_2[H,W]$ with $H$ and $W$
  in bigradings $(0,-2)$ and $(-1,-2)$ respectively. Let
  $\mathrm{Kom}(\F_2[H,W])$ be the \emph{category of chain complexes}
  over $\F_2[H,W]$. The objects are bigraded chain complexes over
  $\F_2[H,W]$ with the differentials in bigrading $(1,0)$, and the
  morphisms are $(0,0)$-graded $\F_2[H,W]$-module chain maps. For
  $C\in\Ob(\mathrm{Kom}(\F_2[H,W])$ and integer $a$, define
  $C\{a\}\in\Ob(\mathrm{Kom}(\F_2[H,W])$ by shifting the second
  grading by $a$.

  Let $\hocat(\F_2[H,W])$ be the \emph{homotopy category}
  over $\F_2[H,W]$. The objects are same as the objects of
  $\mathrm{Kom}(\F_2[H,W])$. The morphisms are equivalence classes of
  morphisms in $\mathrm{Kom}(\F_2[H,W])$; we declare
  $f_1,f_2\in\Hom_{\mathrm{Kom}(\F_2[H,W])}(A,B)$ equivalent if there is a
  $(-1,0)$-graded $\F_2[H,W]$-module map $h\from A\to B$ such that
  \[
  f_1-f_2=d_B\circ h+h\circ d_A.
  \]

  Let $\hocatpair(\F_2[H,W])$ be the
  \emph{homotopy category of pairs} over $\F_2[H,W]$, defined as
  follows. The objects are triples $(A,B,\phi)$ where
  $A,B\in\Ob_{\mathrm{Kom}(\F_2[H,W])}$, and
  $\phi\in\Hom_{\mathrm{Kom}(\F_2[H,W])}(A,B)$. Morphisms from
  $(A_1,B_1,\phi_1)$ to $(A_2,B_2,\phi_2)$ are pairs $(f,g)$, where
  $f\in\Hom_{\mathrm{Kom}(\F_2[H,W])}(A_1,A_2)$ and
  $g\in\Hom_{\mathrm{Kom}(\F_2[H,W])}(B_1,B_2)$ 
  such that the following commutes 
  \[
  \xymatrix{
    A_1\ar[r]^{\phi_1}\ar[d]_f&B_1\ar[d]^g\\
    A_2\ar[r]^{\phi_2}&B_2,
  }
  \]
  modulo the following equivalence relation. We declare $(f_1,g_1)$
  equivalent to $(f_2,g_2)$ if there are $(-1,0)$-graded
  $\F_2[H,W]$-module maps $h\from A_1\to A_2$ and $k\from B_1\to B_2$
  such that the following commutes
  \[
  \xymatrix{
    A_1\ar[r]^{\phi_1}\ar[d]_h&B_1\ar[d]^k\\
    A_2\ar[r]^{\phi_2}&B_2,
  }
  \]
  and
  \begin{align*}
    f_1-f_2&=d_{A_2}\circ h+h\circ d_{A_1}\\
    g_1-g_2&=d_{B_2}\circ h+h\circ d_{B_1}.
  \end{align*}
\end{definition}

\begin{proposition}\label{prop:main-invariance}
  Let $(D,p)$ and $(D',p')$ be two pointed decorated link diagrams
  representing isotopic pointed links in $S^3$.  Then the objects
  $(\tuplethree{\OurCx^-(D,p)\{1\}}{\OurCx(D)}{\iota(D,p)})$ and
  $(\tuplethree{\OurCx^-(D',p')\{1\}}{\OurCx(D')}{\iota(D',p')})$ are
  isomorphic in the homotopy category of pairs
  $\hocatpair(\F_2[H,W])$.
\end{proposition}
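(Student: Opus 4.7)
The plan is to separate the proof into three independent classes of move: change of decoration on a fixed pointed diagram, the three Reidemeister moves away from the basepoint, and movement of the basepoint along the link. For each one I would exhibit an explicit morphism of pairs and verify both the chain map condition for $\diffOur$ and compatibility with the inclusion $\iota$, so that it descends to a morphism in $\hocatpair(\F_2[H,W])$. The filtration rule satisfied by both $\cont{d}$ and $\cont{h}$ will automatically ensure that all the maps one writes down preserve $\OurCx^-$ and hence give morphisms of pairs rather than just of the total complex.

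For decoration invariance, the relevant input is \Lemma{zoltan-homotopy}. For a single crossing $c$ the change is $\dd \mapsto \dd + [\hh_{\{c\}}, \dd]$, since the double sum in \Lemma{zoltan-homotopy} is forced to be empty for $\mf{S} = \{c\}$. A natural candidate chain isomorphism is $\Phi_c = \Id + H \hh_{\{c\}}$, with the $H$-weighting chosen to respect the bigrading. A direct computation reduces the failure of $\Phi_c$ to be a chain map to a combination of $\hh_{\{c\}} \dd \hh_{\{c\}}$ and $[\hh, \hh_{\{c\}}]$; both terms vanish, the first because $\hh_{\{c\}}^2 = 0$ by \Lemma{h-behavior} (any target would require $c$ to be added twice), and the second by \Corollary{h-commutes}. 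Since $\Phi_c^2 = \Id$ over $\F_2$, it is an involutive isomorphism, and iteration handles arbitrary subsets of reversed arrows.

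For Reidemeister invariance, the plan is to lift the chain maps already used to prove invariance of $\BNCx$ in \cite{Bar-kh-tangle-cob} and of $\SzCx$ in \cite{Szab-kh-geometric} to morphisms between the corresponding $\OurCx$ complexes. For R1 and R2 the local nature of the configurations should make this relatively direct: one combines the Bar-Natan correction terms (in powers of $H$) with the \Szabo\ corrections (in powers of $W$) and checks that the combined maps intertwine $\diffOur = \dd + \hh$ using the structural identities $\dd^2 = \hh^2 = [\hh, \dd] = 0$ from \Proposition{is-differential}. Basepoint invariance on a fixed diagram should then follow from essentially the same local maps, since moving the basepoint past a crossing can be realized by an R1-type argument relative to the pointed subcomplex. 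The main obstacle will be R3, where the higher-order terms $\dd_i$ and $\hh_i$ for $i \geq 2$ interact in intricate ways and many compatibility relations must be checked simultaneously. The approach would be to mimic the case analysis of \cite[\S 7]{Szab-kh-geometric} while adding $\hh$-corrections whose structure is forced by $\hh^2 = [\hh, \dd] = 0$, and to verify that the resulting chain homotopies satisfy the filtration rule so that they define morphisms in $\hocatpair(\F_2[H,W])$ rather than merely in $\hocat(\F_2[H,W])$.
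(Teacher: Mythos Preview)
Your decomposition into decoration change, Reidemeister moves, and basepoint moves is sound, but there is a concrete error and a substantial gap.

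\textbf{The decoration isomorphism has the wrong variable.} The map $\hh_{\{c\}}$ adds exactly one crossing, so it lies in $(\gr_h,\gr_q)$-bigrading $(1,2)$. Since $H$ has bigrading $(0,-2)$, your $\Phi_c=\Id+H\hh_{\{c\}}$ is not $(0,0)$-graded and hence not a morphism in $\hocatpair(\F_2[H,W])$. The correct weighting is $W$, giving $\Phi_c=\Id+W\hh_{\{c\}}$; with this fix your computation goes through. The paper, incidentally, bypasses this entirely: it realizes a decoration flip as a pair of RII moves (see \Figure{decor-Reid}), so decoration invariance is absorbed into Reidemeister invariance and no separate isomorphism is needed.

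\textbf{The Reidemeister argument is the real gap.} Your plan to ``lift'' the Bar-Natan and \Szabo\ chain maps and ``combine corrections'' is where the proposal stops being a proof. The paper's key technical lemma (\Lemma{cancel-prelim}) shows that for the specific cancellation data used in each Reidemeister move---where one of the circles involved is small and touched by only the single arc $\al_c$---\emph{only the Khovanov differential $\dd_1$ contributes}; all higher $\dd_i$ and all of $\hh$ vanish on those arrows. This means there are no cross-terms to combine at all: the classical Khovanov cancellation argument works verbatim, including the RIII zigzag. You anticipate a difficult case analysis for RIII precisely because you have not isolated this lemma; without it, the interaction of $\dd_{\geq 2}$ and $\hh_{\geq 2}$ across the RIII cube would indeed be formidable, and nothing in your sketch indicates how you would carry it out.

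Finally, moving the basepoint past a crossing is handled in the paper (following \cite{Kho-kh-patterns}) not by an ``R1-type argument'' but by isotopy through $\infty\in S^2$ together with ordinary Reidemeister moves away from the basepoint; this reduces it to cases already treated.
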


\begin{corollary}\label{cor:easy-invariance}
  Let $D$ and $D'$ be two decorated link diagrams representing
  isotopic links in $S^3$.  Then the objects $\OurCx(D)$ and
  $\OurCx(D')$ are isomorphic in the homotopy category
  $\hocat(\F_2[H,W])$.
\end{corollary}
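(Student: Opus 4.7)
The plan is to deduce this corollary almost immediately from \Proposition{main-invariance} by forgetting the pointed structure. First, I would observe that there is a tautological ``forgetful'' functor $\Phi \colon \hocatpair(\F_2[H,W]) \to \hocat(\F_2[H,W])$ sending a triple $(A,B,\phi)$ to its middle object $B$ and a morphism $(f,g)$ to $g$. This assignment is well-defined on equivalence classes because the equivalence relation defining morphisms in $\hocatpair$ already requires an $\F_2[H,W]$-module homotopy $k \colon B_1 \to B_2$ witnessing $g_1 - g_2 = d_{B_2} \circ k + k \circ d_{B_1}$, which is exactly the equivalence relation in $\hocat$. Since functors preserve isomorphisms, it suffices to produce an isomorphism between suitable pairs in $\hocatpair(\F_2[H,W])$ to which $\Phi$ can be applied.

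To this end, choose an arbitrary basepoint $p$ on the link underlying $D$. Because $D$ and $D'$ represent isotopic links, there is an ambient isotopy of $S^3$ carrying the link of $D$ to the link of $D'$; let $p'$ be the image of $p$ under this isotopy, viewed as a basepoint on $D'$ (possibly after a small planar isotopy). Then $(D,p)$ and $(D',p')$ represent isotopic pointed links, and \Proposition{main-invariance} supplies an isomorphism in $\hocatpair(\F_2[H,W])$ between the triples
\[
(\OurCx^-(D,p)\{1\},\,\OurCx(D),\,\iota(D,p)) \quad\text{and}\quad (\OurCx^-(D',p')\{1\},\,\OurCx(D'),\,\iota(D',p')).
\]

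Applying $\Phi$ to this isomorphism yields the desired isomorphism $\OurCx(D) \cong \OurCx(D')$ in $\hocat(\F_2[H,W])$. There is no substantive obstacle: the argument is a formal consequence of \Proposition{main-invariance} combined with the observation that a chain homotopy equivalence of pairs restricts to a chain homotopy equivalence on the middle object. The only minor point to verify is that the middle object $\OurCx(D)$ does not depend on the choice of basepoint (which is clear, since basepoints enter only in the definitions of the reduced complexes $\OurCx^\pm$ and play no role in $\OurCx$ itself), and that an isotopy of unpointed links can always be promoted to an isotopy of pointed links by transporting the basepoint along it.
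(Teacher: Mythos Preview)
Your proposal is correct and matches the paper's intent: the paper does not write out a separate proof of this corollary, treating it as an immediate consequence of \Proposition{main-invariance}, and your forgetful-functor argument is exactly the formal mechanism that makes this deduction precise. The only content beyond the proposition is the observation that an unpointed isotopy can be promoted to a pointed one by carrying along an arbitrary basepoint, which you note.
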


We will basically check invariance under the three Reidemeister moves
from \Figure{Reid-moves}, following the standard arguments. Along the
way, we will need the following well-known (and heavily used)
cancellation principle.

\begin{definition}\label{def:cancellation-data}
  Fix a link diagram $D$. A $5$-tuple $(\Crossings_0,u,v,c,a)$ is
  called a \emph{cancellation data} if $u\subsetneq\{c\}\cup
  u=v\subseteq\Crossings_0\subseteq\Crossings$, and one of the
  following holds.
  \begin{enumerate}
  \item\label{item:cancel-1} The surgery arc $\al_c$ joins two different circles in
    $\AssRes{u}$, the complete resolution of $D$ corresponding to $u$,
    and $a\in Z(\AssRes{u})$ is one of the two circles connected by
    $\al_c$, and $a$ is disjoint from $\al_{c'}$ for all
    $c'\in\Crossings\setminus\Crossings_0$. In this case, for any
    $w\subseteq\Crossings\setminus\Crossings_0$, there is a natural
    bijection between $Z(\AssRes{u\cup w})\sm\{a\}$ and
    $Z(\AssRes{v\cup w})$, and we call Khovanov generators $(u\cup
    w,x)$ and $(v\cup w,y)$ to be a \emph{canceling pair} if $a$ does
    not appear in the monomial $x$, and the monomials $x$ and $y$ are
    related by the above bijection. 
  \item\label{item:cancel-2} The arc $\al_c$ has both its endpoints on
    the same circle in $\AssRes{u}$, and $a\in Z(\AssRes{v})$ is one
    of the two circles obtained by surgering that circle along
    $\al_c$, and $a$ is disjoint from $\al_{c'}$ for all
    $c'\in\Crossings\setminus\Crossings_0$. In this case, for any
    $w\subseteq\Crossings\setminus\Crossings_0$, there is a natural
    bijection between $Z(\AssRes{u\cup w})$ and $Z(\AssRes{v\cup
      w})\sm\{a\}$, and we call Khovanov generators $(u\cup w,x)$ and
    $(v\cup w,y)$ to be a \emph{canceling pair} if $a$ appears in the
    monomial $y$, and the monomials $x$ and $a^{-1}y$ are related by
    the above bijection. (This is the dual of the above case.)
  \end{enumerate}
\end{definition}

\begin{lemma}\label{lem:cancel-prelim}
  Let $(\Crossings_0,u,v,c,a)$ be a cancellation data. For any
  $w,w'\subseteq\Crossings\setminus\Crossings_0$, and any Khovanov
  generators $(u\cup w,x)$ and $(v\cup w',y)$,
  \[
  \langle
  \diffOur((u\cup w,x)),(v\cup w',y)\rangle=\begin{cases}
    1&\text{if $w=w'$ and $(u\cup w,x)$ and $(v\cup w,y)$ is a canceling
      pair,}\\
    0&\text{otherwise.}
  \end{cases}
  \]
  In particular, if $S$ is the subset of the Khovanov generators
  consisting of all the canceling pairs for $(\Crossings_0,u,v,c,a)$,
  then the subquotient complex spanned by $S$ is isomorphic to the
  trivial object in $\hocat(\F_2[H,W])$.
\end{lemma}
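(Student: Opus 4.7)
The plan is to verify the matrix-element formula by an active-part case analysis exploiting the special role of $a$, and then to deduce triviality of the $S$-spanned subquotient as a direct-sum decomposition into two-term acyclic complexes.

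First I observe that $\diffOur$ strictly raises homological grading, so the matrix element vanishes unless $u\cup w\subsetneq v\cup w'$; given $v=u\cup\{c\}$ and $w,w'\subseteq\Crossings\setminus\Crossings_0$, this forces $w\subseteq w'$. The configuration $\ResConfig{u\cup w}{v\cup w'}$ therefore carries arcs $\al_c$ together with $\{\al_{c'}\mid c'\in w'\setminus w\}$. Because the cancellation-data hypothesis requires $a$ to be disjoint from every $\al_{c'}$ with $c'\in\Crossings\setminus\Crossings_0$, the circle $a$ is incident in this configuration to exactly one arc, namely $\al_c$. This ``pendant'' property of $a$ is the main geometric input to the proof.

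I then split into $w=w'$ and $w\subsetneq w'$. Suppose first that $w=w'$, so the only arc is $\al_c$ and only $\dd_1$ and $H\hh_1$ can contribute; I work out case~\Item{cancel-1} of \Definition{cancellation-data} in detail and defer case~\Item{cancel-2} to the duality rule satisfied by $\cont{d}$ and $\cont{h}$. In case~\Item{cancel-1}, $\al_c$ merges $a$ with some $A'\in Z(\AssRes{u\cup w})$ into $M\in Z(\AssRes{v\cup w})$. Under the canceling-pair assumption $a\notin x$, the tree contribution $\hh_1$ requires both starting circles to appear in the starting monomial, forcing $a\in x$, so it vanishes; meanwhile the Khovanov merge $\dd_1$ sends $(u\cup w,x)$ to the unique $(v\cup w,y)$ whose label on $M$ matches the label on $A'$ in $x$ and which agrees with $x$ on passive circles---which is exactly the bijection image of $x$. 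This establishes the formula in the $w=w'$ subcase.

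The bulk of the proof is the $w\subsetneq w'$ subcase, where I must show that no canceling-pair left-side $(u\cup w,x)$ admits a nonzero contribution. The argument is a case analysis against the allowed active-part families. For $\hh$-contributions, the active part must be a disjoint union of trees and dual trees; a pendant $a$ with $a\notin x$ can be neither a leaf of a tree (whose starting circles must all appear in $x$) nor the starting circle of a dual tree (whose only arcs are self-arcs, whereas $\al_c$ is a merge arc in case~\Item{cancel-1}). For $\dd$-contributions, the active part must equal one of \Szabo's Types~A--E. Type~A forces $k=1$ and hence $w=w'$; Types~B and C each admit only one active starting circle, which carries all the arcs and thus cannot accommodate an additional pendant~$a$; Type~D has all active starting circles labeled~$1$, contradicting $a\notin x$; and in Type~E the special starting circle is incident to multiple arcs as soon as the index exceeds one, so $a$ cannot be special, while any non-special starting circle must appear in $x$, so $a$ cannot be non-special either. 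A dual analysis handles case~\Item{cancel-2} by applying the duality rule.

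Once the matrix-element formula is in hand, the second assertion follows routinely: the subquotient complex spanned by $S$ decomposes as an $\F_2[H,W]$-module into the direct sum, indexed by canceling pairs, of two-term complexes of the form $(u\cup w,x)\xrightarrow{1}(v\cup w,y)$, each of which is chain-contractible via the obvious null-homotopy sending $(v\cup w,y)\mapsto(u\cup w,x)$ and the other generator to zero; hence the total is isomorphic to the zero object in $\hocat(\F_2[H,W])$. The main obstacle is the enumeration in the $w\subsetneq w'$ subcase: Types~D and E in particular admit many leaf-like starting circles, so one must carefully combine the pendant geometry of $a$, the labeling condition $a\notin x$, and the fact that $\al_c$ is a merge arc rather than a self-arc to exclude every candidate configuration.
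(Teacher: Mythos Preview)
Your argument for the matrix-element formula is correct and matches the paper's approach: both use the pendant property of $a$ together with $a\notin x$ (in case~\Item{cancel-1}) to rule out all contributions except the index-$1$ Khovanov merge, and both defer case~\Item{cancel-2} to duality. The paper is terser---it simply notes that a glance at the \Szabo{} configurations forces the active part to be index-$1$ Type-A or Type-E---whereas you spell out the Type-by-Type elimination; the content is the same. (Both you and the paper tacitly restrict to $a\notin x$, which is what the application to $S$ requires.)

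There is, however, a gap in your deduction of the ``in particular'' clause. The subquotient spanned by $S$ does \emph{not} decompose as a chain-complex direct sum of two-term pieces: the restriction of $\diffOur$ to $S$ has, in addition to the canceling-pair arrows, nonzero components among the $(u\cup w,x)$'s (with $a\notin x$) and among the $(v\cup w,y)$'s, since those generators are linked by arcs from $\Crossings\setminus\Crossings_0$ to which the lemma says nothing. What the matrix-element formula actually gives you is that the $S$-complex is the mapping cone of an isomorphism $\phi$ from the span $A$ of the $(u\cup w,x)$'s to the span $B$ of the $(v\cup w,y)$'s. Your null-homotopy $h=\phi^{-1}$ on $B$ and $0$ on $A$ is then exactly the right map, but the verification that $\diffOur\,h+h\,\diffOur=\Id$ requires the intertwining relation $\phi\,d_A=d_B\,\phi$ (which follows from $\diffOur^{\,2}=0$), not a direct-sum splitting. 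The paper does not spell this step out either, treating the cone-of-an-isomorphism contractibility as standard.
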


\begin{proof}
  Let us assume the cancellation data $(\Crossings_0,u,v,c,a)$
  corresponds to Case~(\ref{item:cancel-1}) of
  \Definition{cancellation-data}. Case~(\ref{item:cancel-2}), being
  the dual, should follow. 

  The coefficient $\langle \diffOur((u\cup w,x)),(v\cup w',y)\rangle$
  can be non-zero only if $w\subseteq w'$, and the resolution
  configuration $(\ResConfig{u\cup w}{v\cup w'},x,y)$ either has a
  non-zero contribution in $\cont{d}$ or a non-zero contribution in
  $\cont{h}$. To have a non-zero contribution in $\cont{d}$, $x$ and
  $y$ must agree on the passive circles, and the active part of
  $(\ResConfig{u\cup w}{v\cup w'},x,y)$ must be equivalent to one of
  the five families described in \Definition{szabo-differential}. The
  circle $a$ is one of the active starting circles, has only one arc
  incident to it, and does not appear in the starting monomial $x$. A
  quick glance at \Figure{zoltan-configurations} implies the active
  part of the configuration must be an index-$1$ Type-A or Type-E
  configuration. This occurs when $w=w'$, and $(u\cup w,x)$ and
  $(v\cup w,y)$ form a canceling pair. These are precisely the
  configurations that contribute to the Khovanov differential $\dd_1$;
  see also \cite[Proof of Theorem~7.2]{Szab-kh-geometric}.

  In order to have a non-zero contribution in $\cont{h}$,
  $(\ResConfig{u\cup w}{v\cup w'},x,y)$ must be equivalent to disjoint
  union of trees and dual trees, as in
  \Definition{our-contribution-function}. However, since the arc
  $\al_c$ joins the circle $a$ to another circle, and $a$ does not
  appear in the starting monomial $x$, it cannot be a part of either a
  tree or a dual tree. Therefore, $(\ResConfig{u\cup w}{v\cup
    w'},x,y)$ can never contribute to $\cont{h}$. Consequently, the
  only contributions come from $\cont{d}$, and as analyzed earlier,
  they are exactly of the form as described in the lemma.
\end{proof}

Now we are almost set to prove invariance under Reidemeister
moves. The standard strategy is to delete certain canceling pairs to
simplify the chain complex, at the cost of adding new \emph{zigzag
  differentials}. For an example of how these zigzag differentials
work, assume we have a chain complex freely generated over some ring
$R$ with four generators $a_1,a_2,b_1,b_2$, and the differential is
\[
\diff a_1= b_1c_{11}+b_2c_{12}\qquad \diff a_2 =b_1c_{21}+b_2c_{22},
\]
and assume $c_{11}$ is a unit. Then we may perform a change of basis
\[
a'_1=a_1\qquad a'_2=a_2-a_1c_{11}^{-1}c_{21}\qquad b'_1=b_1+b_2c_{12}c_{11}^{-1}\qquad b'_2=b_2,
\] 
and then the chain complex breaks up into two direct summands,
generated by $\{a'_1,b'_1\}$ and $\{a'_2,b'_2\}$. The former is
acyclic, so we may cancel it, and then we are left with a complex with
just two generators $a'_2$ and $b'_2$ with differential $\diff
a'_2=b'_2(c_{22}-c_{12}c^{-1}_{11}c_{21})$. This operation may be
viewed as simply canceling the arrow $a_1\to b_1$ in the original
chain complex and adding an extra zigzag arrow $a_2\to b_2$ with
coefficient $(-c_{12}c^{-1}_{11}c_{21})$.
\begin{equation}\label{eq:zigzag}
\left(\vcenter{\hbox{\begin{tikzpicture}
      \node (a1) at (-1,1) {$a_1$};
      \node (a2) at (1,1) {$a_2$};
      \node (b1) at (-1,-1) {$b_1$};
      \node (b2) at (1,-1) {$b_2$};
      
      \draw[->] (a1) -- (b1) node[midway,fill=white] {\small $c_{11}$};
      \draw[->] (a1) -- (b2) node[pos=0.3,fill=white] {\small $c_{12}$};
      \draw[->] (a2) -- (b1) node[pos=0.3,fill=white] {\small $c_{21}$};
      \draw[->] (a2) -- (b2) node[midway,fill=white] {\small $c_{22}$};
    \end{tikzpicture}}}\right)
\cong
\left(\vcenter{\hbox{\begin{tikzpicture}
      \node (a1) at (-1,1) {$a_1$};
      \node (a2) at (1,1) {$a_2-a_1c_{11}^{-1}c_{21}$};
      \node (b1) at (-1,-1) {$b_1+b_2c_{12}c_{11}^{-1}$};
      \node (b2) at (1,-1) {$b_2$};
      
      \draw[->] (a1) -- (b1) node[midway,fill=white] {\small $c_{11}$};
      \draw[->] (a2) -- (b2) node[midway,fill=white] {\small $c_{22}-c_{12}c_{11}^{-1}c_{21}$};
    \end{tikzpicture}}}\right)
=
\left(\vcenter{\hbox{\begin{tikzpicture}
      \node (a1) at (-1,1) {$a'_1$};
      \node (b1) at (-1,-1) {$b'_1$};
      
      \draw[->] (a1) -- (b1) node[midway,fill=white] {\small $c_{11}$};
    \end{tikzpicture}}}\right)
\oplus
\left(\vcenter{\hbox{\begin{tikzpicture}
      \node (a1) at (-1,1) {};
      \node (a2) at (1,1) {$a'_2$};
      \node (b1) at (-1,-1) {};
      \node (b2) at (1,-1) {$b'_2$};
      
      \draw[->] (a2) -- (b2) node[midway,fill=white] {\small $c_{22}$};
      \draw[->,rounded corners] (a2) -- (-1,-1) -- 
      node[midway,fill=white,inner sep=0pt,outer sep=0pt] {\small $-c_{12}c_{11}^{-1}c_{21}$} (-1,1) -- (b2) ;
    \end{tikzpicture}}}\right)
\end{equation}
%  (The general case is same as the situation described
% above: just treat the generators $a_1,a_2,b_1,b_2$ as direct summands,
% treat the coefficients $c_{ij}$ as matrices or linear maps, and assume
% $c_{11}$ is invertible.)

\setlength\tempfigdim{0.25\textheight}
\captionsetup[subfloat]{width=0.2\textwidth}
\begin{figure}
  \centering
  \subfloat[RI.]{\label{fig:R-I}
    \xymatrix@C=0.1\textwidth{
      \vcenter{\hbox{\includegraphics[height=\tempfigdim]{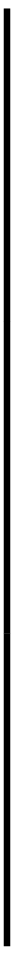}}}\ar@{->}[r]& \vcenter{\hbox{\includegraphics[height=\tempfigdim]{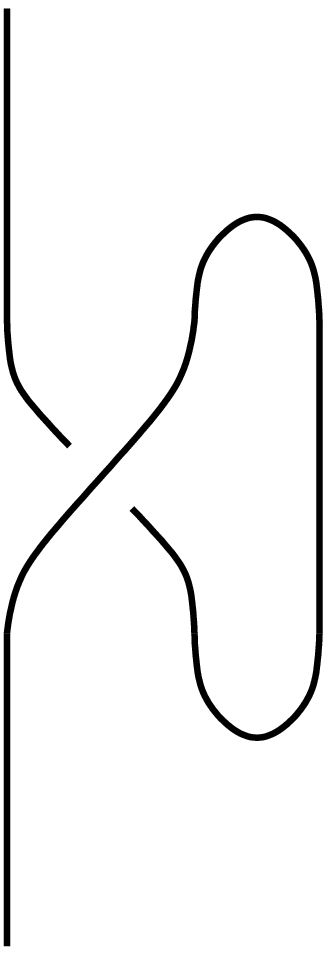}}}
    }
  }
  \hspace{0.03\textwidth}
  \subfloat[RII.]{\label{fig:R-II}
    \xymatrix@C=0.1\textwidth{
      \vcenter{\hbox{\includegraphics[height=\tempfigdim]{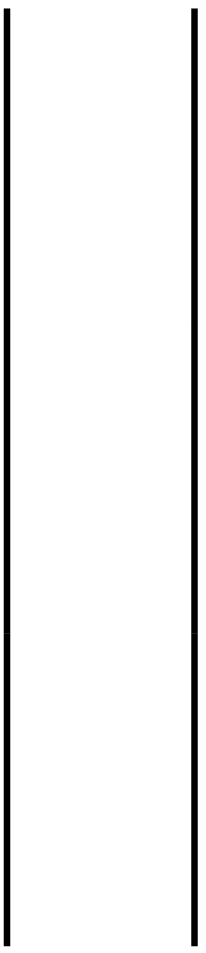}}}\ar@{->}[r]& \vcenter{\hbox{\includegraphics[height=\tempfigdim]{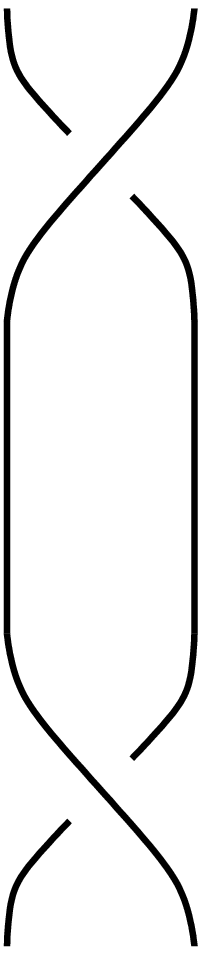}}}
    }
  }
  \hspace{0.03\textwidth}
  \subfloat[RIII.]{\label{fig:R-III}
    \xymatrix@C=0.1\textwidth{
      \vcenter{\hbox{\includegraphics[height=\tempfigdim]{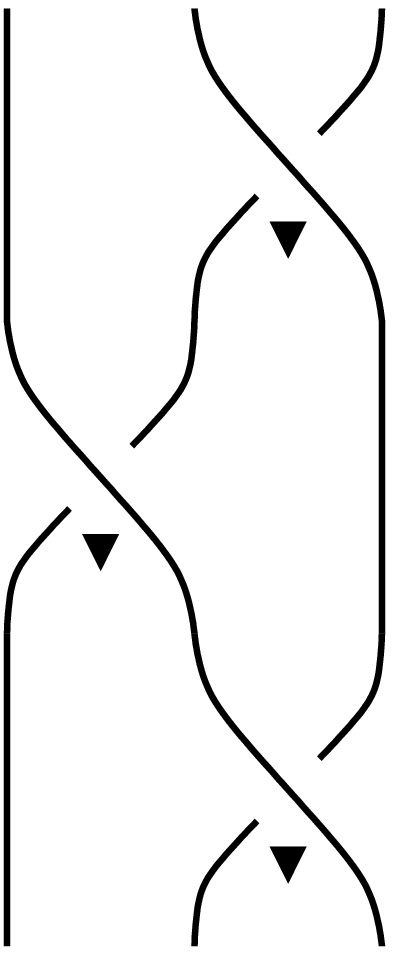}}}\ar@{->}[r]& \vcenter{\hbox{\includegraphics[height=\tempfigdim]{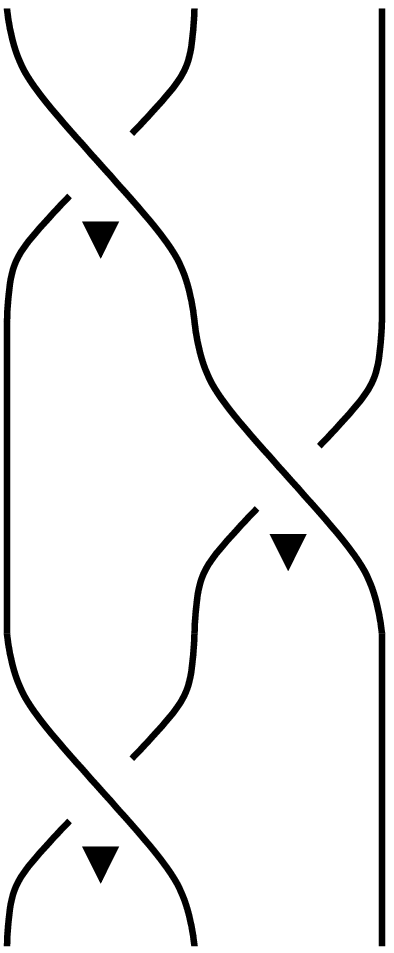}}}
    }
  }
  \caption{The three Reidemeister moves. The orientations of the
    strands are arbitrary. For RI and RII, the decorations at the new
    crossings are also arbitrary; for RIII, the decorations are as
    shown.}\label{fig:Reid-moves}
\end{figure}

\begin{proposition}\label{prop:R-I-inv}
  Assume that the decorated pointed link diagram $(D',p)$ is obtained
  from the decorated pointed link diagram $(D,p)$ by doing a positive
  Reidemeister-I stabilization away from the basepoint $p$, see
  \Figure{R-I}, and by extending the decoration arbitrarily on the
  extra crossing. Then
  $(\tuplethree{\OurCx^-(D,p)\{1\}}{\OurCx(D)}{\iota(D,p)})$ and
  $(\tuplethree{\OurCx^-(D',p)\{1\}}{\OurCx(D')}{\iota(D',p)})$ are
  isomorphic in $\hocatpair(\F_2[H,W])$.
\end{proposition}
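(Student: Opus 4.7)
The plan is to apply Gaussian elimination via \Lemma{cancel-prelim} to remove a large acyclic subquotient of $\OurCx(D')$, and then to identify the surviving complex with $\OurCx(D)$. Let $c_0$ denote the new crossing and $a$ the small circle created in one of its two resolutions; without loss of generality $a$ appears in the $0$-resolution at $c_0$ (the other case is dual, handled by \Item{cancel-2} of \Definition{cancellation-data}). Since the R-I move occurs in a small disk disjoint from $p$ and from all other crossings of $D'$, the tuple $(\{c_0\}, \emptyset, \{c_0\}, c_0, a)$ is a valid cancellation data of type \Item{cancel-1}, and \Lemma{cancel-prelim} implies that its canceling pairs span a trivial subquotient of $\OurCx(D')$ over $\F_2[H,W]$.

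The surviving generators are precisely the Khovanov generators $(w, x)$ with $w \sbseq \Crossings \sm \{c_0\}$ and $a$ dividing $x$; writing $x = a x'$, the assignment $\Psi((w, a x')) = (w, x')$ yields a bijection onto the Khovanov generators of $D$. A direct check from the formulas for $\gr_h$ and $\gr_q$ shows that $\Psi$ is bigrading-preserving: the $+1$ from the extra positive crossing in $D'$, the $+1$ from the extra circle $a$ in each surviving resolution, and the $-2$ from the extra factor of $a$ in the monomial cancel out in $\gr_q$, and the homological grading is clearly preserved.

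The main step is to check that, after Gaussian eliminating the canceling pairs, the effective differential on the surviving subquotient corresponds under $\Psi$ to $\diffOur(D)$. The direct coefficient $\langle \diffOur(D')((w_1, a x'_1)), (w_2, a x'_2)\rangle$ equals $\langle \diffOur(D)((w_1, x'_1)), (w_2, x'_2)\rangle$ by the extension rule, since $a$ sits as a passive circle in both monomials and is untouched by any arc other than $\al_{c_0}$. The zigzag contributions go from a surviving generator through a canceling pair via a configuration using $\al_{c_0}$, across the identity-like canceling arrow, and out to another surviving generator via another such configuration; these zigzag terms should cancel pairwise modulo $2$ by a case analysis of the allowed configurations, in the spirit of \Lemma{main-enumeration} and \Lemma{subs-enumeration} from the proof of \Proposition{is-differential}. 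The freedom to choose the decoration at $c_0$, via \Lemma{dec-independence}, further simplifies this case analysis.

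The inclusion $\iota$ is matched under $\Psi$ automatically: because $p$ lies on a circle that is disjoint from $a$ in every resolution, a surviving generator $(w, a x')$ belongs to $\OurCx^-(D', p)$ if and only if the circle containing $p$ divides $x'$, if and only if $\Psi((w, a x'))$ belongs to $\OurCx^-(D, p)$; the grading shift $\{1\}$ is preserved by $\Psi$. The principal obstacle I anticipate is the zigzag cancellation in the third step---specifically, verifying that the new $\hh$-type contributions (disjoint unions of trees and dual trees) do not produce unmatched zigzag terms. The rigidity of trees (each component collapses to a single ending circle) combined with the fact that $\al_{c_0}$ has one endpoint on the small circle $a$ severely restricts the possibilities, making the check finite and tractable.
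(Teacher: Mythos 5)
Your overall plan matches the paper's: the cancellation data $(\{c\},\emptyset,\{c\},c,a)$, an appeal to \Lemma{cancel-prelim}, the identification $(w,ax')\mapsto(w,x')$ of surviving generators with $\OurCx(D)$, and the bigrading check are exactly what the paper does. But you declare the principal obstacle to be a zigzag-cancellation argument, offer only a heuristic for why it should work, and leave it unverified; as written the proposal is incomplete precisely at the step you yourself flag as the crux.

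That step is in fact vacuous, which is the observation you are missing and which the paper records explicitly: the canceling pairs span a \emph{subcomplex} of $\OurCx(D')$, not merely a subquotient. To see this, let $(w,x)$ be a lower canceling generator, so $c\notin w$ and $a$ does not appear in $x$. Any nonzero term of $\diffOur$ out of $(w,x)$ lands over some $w'\supseteq w$. If $c\in w'$, the target is an upper canceling generator. If $c\notin w'$, then $a$ is a passive circle of $\ResConfig{w}{w'}$ (the only arc meeting $a$ is $\al_c$, and $c\notin w'$), so by the extension rule the target monomial must agree with $x$ on $a$ and hence also fails to contain $a$; the target is again a lower canceling generator. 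Upper canceling generators map only among themselves. In particular the differential from a lower canceling generator to a surviving generator $(w',az')$ vanishes identically, which is exactly the zigzag ingredient you were worried about; since it is zero, no zigzag differentials arise. The quotient of $\OurCx(D')$ by this acyclic subcomplex is therefore literally $\OurCx(D)$ after the bigrading shift you computed, and the proof closes without any case analysis in the style of \Lemma{main-enumeration} or \Lemma{subs-enumeration}, and without invoking \Lemma{dec-independence}.
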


\begin{proof}
  Let $\Crossings(D)$ and $\Crossings(D')$ be the set of crossings for
  $D$ and $D'$, respectively, and let $\{c\}=\Crossings(D')\setminus
  \Crossings(D)$.  Doing the $0$-resolution at $c$ produces a complete
  circle, say $a$, contained in the neighborhood of $c$ where the
  Reidemeister-I stabilization takes place. Then
  $(\{c\},\emptyset,\{c\},c,a)$ constitute a cancellation data for
  $D'$ as in \Definition{cancellation-data}, see also
  \cite[Section~5.1]{Kho-kh-categorification}.

  Consider the union of all the canceling pairs for this cancellation
  data.  They span a subcomplex of $\OurCx(D')$, and the corresponding
  quotient complex is naturally isomorphic to $\OurCx(D)$. Since the
  subcomplex spanned by the canceling pairs is trivial (from
  \Lemma{cancel-prelim}), and $p\notin a$, we get
  $(\tuplethree{\OurCx^-(D',p)\{1\}}{\OurCx(D')}{\iota(D',p)})$ is
  isomorphic to
  $(\tuplethree{\OurCx^-(D,p)\{1\}}{\OurCx(D)}{\iota(D,p)})$ in
  $\hocatpair(\F_2[H,W])$.
\end{proof}

\begin{proposition}\label{prop:R-II-inv}
  Assume that the decorated pointed link diagram $(D',p)$ is obtained
  from the decorated pointed link diagram $(D,p)$ by adding a pair of
  crossings via a Reidemeister-II move away from the basepoint $p$,
  see \Figure{R-II}, and by extending the decoration arbitrarily on
  the two extra crossings. Then
  $(\tuplethree{\OurCx^-(D,p)\{1\}}{\OurCx(D)}{\iota(D,p)})$ and
  $(\tuplethree{\OurCx^-(D',p)\{1\}}{\OurCx(D')}{\iota(D',p)})$ are
  isomorphic in $\hocatpair(\F_2[H,W])$.
\end{proposition}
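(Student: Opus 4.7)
The plan is to imitate the proof of \Proposition{R-I-inv}, but with two successive applications of \Lemma{cancel-prelim}, one for each of the new crossings, and with careful tracking of zigzag differentials of the form displayed in \EquationCont{zigzag} in between. Let $\{c_1,c_2\}=\Crossings(D')\sm\Crossings(D)$ be the two new crossings created by the Reidemeister-II move, so that $\OurCx(D')$ decomposes over $\{0,1\}^2$ into four local chain groups indexed by the resolutions of $(c_1,c_2)$. In one of the four mixed resolutions the local picture contains an isolated surgery arc (one of $\al_{c_1}$ or $\al_{c_2}$) whose surgery creates, or merges off, a small circle $a$ lying entirely in the RII region and disjoint from every $\al_{c'}$ with $c'\in\Crossings(D)$. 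This produces a cancellation data in the sense of \Definition{cancellation-data}.

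First I would apply \Lemma{cancel-prelim} to this cancellation data, which kills the canceling pairs between two adjacent resolutions in the local cube, leaving a subquotient generated by the Khovanov generators in the remaining two resolutions, together with the zigzag differentials induced as in \EquationCont{zigzag}. In these two surviving resolutions the remaining new arc (say $\al_{c_1}$) now has both endpoints on the same circle and its surgery again creates a small circle $a'$ disjoint from all other surgery arcs, giving a second cancellation data to which \Lemma{cancel-prelim} applies. After the second cancellation, the surviving generators are in natural bijection with the Khovanov generators of $D$, and since $p$ lies outside the RII region this bijection carries $\OurCx^-(D',p)$ to $\OurCx^-(D,p)$ and respects the inclusion maps.

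Two things then need to be checked: $(a)$ that the induced differential on the final subquotient is exactly $\diffOur(D)$; and $(b)$ that the inclusion $\iota(D',p)$ matches $\iota(D,p)$ under this identification. Statement $(b)$ is immediate once $(a)$ is known, since the basepoint is external to the RII region. Statement $(a)$ is the main obstacle. The surviving differential is a sum of direct $\dd$- and $\hh$-contributions restricted to the surviving resolution together with zigzag contributions of the form (arrow into a canceled generator) composed with the cancellation isomorphism composed with (arrow out). One must enumerate, locally inside the RII region, all the ways that trees, dual trees, and the Type-A through Type-E configurations of \Figure{zoltan-configurations} can traverse the canceled generators, and verify that the sum of direct and zigzag contributions reproduces each coefficient of $\diffOur(D)$. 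Thanks to the extension rule satisfied by $\cont{d}$ and $\cont{h}$, this enumeration can be carried out inside the RII neighborhood and then globalized.

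A useful sanity check is that setting $W=0$ must recover Bar-Natan's RII invariance of $\BNCx$ and setting $H=0$ must recover \Szabo{}'s RII invariance of $\SzCx$; this means the local enumeration only genuinely needs to verify the \emph{mixed} terms (those involving both $H$ and $W$) cancel correctly, and the pure $\dd$-only and $\hh$-only cancellations can be cited from \cite{Szab-kh-geometric} and \cite{Bar-kh-tangle-cob} respectively, considerably trimming the case analysis.
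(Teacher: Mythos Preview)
Your plan overshoots. The key observation you are missing is that for RII the two cancellations can be ordered so that one removes an acyclic \emph{subcomplex} and the other an acyclic \emph{quotient complex}; in that situation no zigzag terms of the form \EquationCont{zigzag} ever appear, and the case analysis you outline in (a) is unnecessary. (Contrast this with RIII in \Proposition{R-III-inv}, where a zigzag genuinely arises and must be analyzed.)

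Concretely: the small circle $a$ sits in exactly one mixed resolution, say the $(1,0)$-resolution at $(c_1,c_2)$, and \emph{both} cancellation data use this same $a$. One data pairs the generators at $10$ where $a$ is absent from the monomial with all generators at $11$ (Case~(\ref{item:cancel-1}) of \Definition{cancellation-data}); the other pairs the generators at $10$ where $a$ is present with all generators at $00$ (Case~(\ref{item:cancel-2})). The first set spans a subcomplex of $\OurCx(D')$: by \Lemma{cancel-prelim} the only arrows out of ``$10$, $a$ absent'' that change the $(c_1,c_2)$-resolution land in $11$, while arrows staying at $10$ preserve the $a$-label by the extension rule. After quotienting out this subcomplex, the second set spans a quotient complex of what remains, since $01$ and $10$ are incomparable in the cube and hence there are no arrows from $01$ into it. The surviving $01$-piece is therefore an honest subquotient carrying its \emph{original} differential, which is manifestly $\diffOur(D)$ because the $01$-resolution is isotopic to $D$.

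Your description also contains a geometric slip: after the first cancellation you do not have ``two surviving resolutions'' and there is no second small circle $a'$; rather you have all of $00$, all of $01$, and the remaining half of $10$, and the second cancellation again uses the same circle $a$.
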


\setlength\tempfigdim{0.1\textheight}
\begin{figure}
  \[
  \xymatrix{
    \vcenter{\hbox{\includegraphics[height=\tempfigdim]{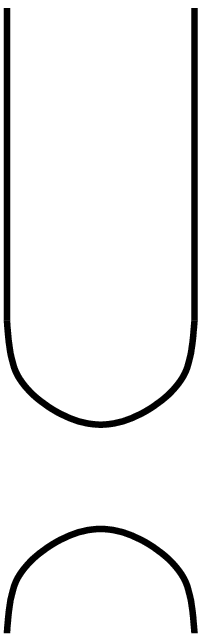}}}\ar[r]\ar@{=>}[d]&
    \vcenter{\hbox{\includegraphics[height=\tempfigdim]{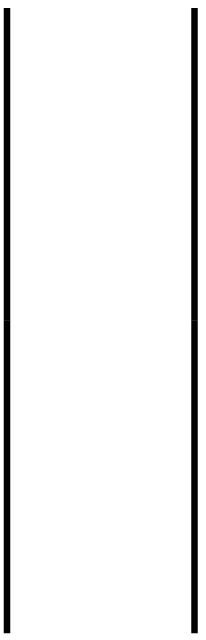}}}\ar[d]\\
    \vcenter{\hbox{\includegraphics[height=\tempfigdim]{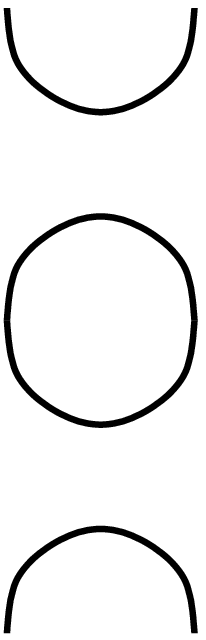}}}\ar@{=>}[r]&
    \vcenter{\hbox{\includegraphics[height=\tempfigdim]{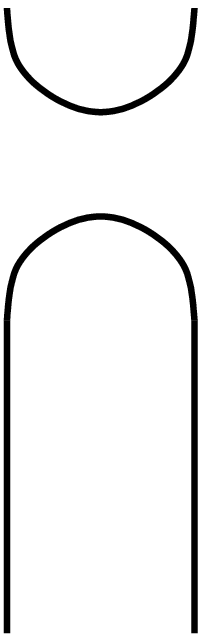}}}
  }
  \]
  \caption{The partial cube of resolutions of $D'$ for the RII
    invariance. We cancel along the double arrows leaving a subquotient
    complex isomorphic to the complex for $D$.}\label{fig:Reid2-cube}
\end{figure}

\begin{proof}
  The argument is similar to the previous one. Let
  $\{c_1,c_2\}=\Crossings(D')\setminus\Crossings(D)$, numbered top to
  bottom (as per \Figure{R-II}). Then the subquotient complex of
  $\OurCx(D')$ spanned by the Khovanov generators that live over the
  $0$-resolution at $c_1$ and the $1$-resolution at $c_2$ is naturally
  isomorphic (after the correct bi-grading shifts) to $\OurCx(D)$.

  Doing the $1$-resolution at $c_1$ and the $0$-resolution at $c_2$
  produces a complete circle, say $a$, contained in the neighborhood
  of $\{c_1,c_2\}$ where the Reidemeister-II move takes place. Then
  both $(\{c_1,c_2\},\emptyset,\{c_1\},c_1,a)$ and
  $(\{c_1,c_2\},\{c_1\},\{c_1,c_2\},c_2,a)$ are cancellation data for
  $D'$, see also \cite[Section~5.3]{Kho-kh-categorification}.

  Using \Lemma{cancel-prelim}, we can first cancel the subcomplex of
  $\OurCx(D')$ spanned by all the canceling pairs for
  $(\{c_1,c_2\},\{c_1\},\{c_1,c_2\},c_2,a)$, and then cancel the
  quotient complex spanned by the canceling pairs of
  $(\{c_1,c_2\},\emptyset,\{c_1\},c_1,a)$. After all the
  cancellations, we will be left with the subquotient complex
  isomorphic to $\OurCx(D)$, and (and since $p\notin a$) this
  establishes the isomorphism between
  $(\tuplethree{\OurCx^-(D,p)\{1\}}{\OurCx(D)}{\iota(D,p)})$ and
  $(\tuplethree{\OurCx^-(D',p)\{1\}}{\OurCx(D')}{\iota(D',p)})$ in
  $\hocatpair(\F_2[H,W])$. See also \Figure{Reid2-cube}.
\end{proof}

\begin{proposition}\label{prop:R-III-inv}
  Assume that the decorated pointed link diagram $(D',p)$ is obtained
  from the decorated pointed link diagram $(D,p)$ by performing a
  Reidemeister-III move aways from the basepoint $p$, with the
  decorations being consistent, as shown in \Figure{R-III}.  Then
  $(\tuplethree{\OurCx^-(D,p)\{1\}}{\OurCx(D)}{\iota(D,p)})$ and
  $(\tuplethree{\OurCx^-(D',p)\{1\}}{\OurCx(D')}{\iota(D',p)})$ are
  isomorphic in $\hocatpair(\F_2[H,W])$.
\end{proposition}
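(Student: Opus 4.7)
The plan is to follow the cube-of-resolutions cancellation strategy used for \Proposition{R-I-inv} and \Proposition{R-II-inv}, now applied to the three-dimensional subcube over the three crossings involved in the RIII move. On each side of the move, let $\{c_1,c_2,c_3\}\sbs\Crossings(D)$ and $\{c'_1,c'_2,c'_3\}\sbs\Crossings(D')$ denote the three relevant crossings, and focus on the contribution of the corresponding eight complete resolutions of these crossings (with all other crossings held in arbitrary but fixed resolutions) to $\OurCx(D)$ and $\OurCx(D')$. The goal is to identify two cancellation data on each side, apply \Lemma{cancel-prelim} iteratively, and arrive at reduced subquotient complexes on the two sides that are manifestly isomorphic under a bijection of Khovanov generators respecting the basepoint $p$ and hence the inclusion $\iota$.

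First I would identify the cancellation data on each side, following the same template as in the proof of \Proposition{R-II-inv}. In each eight-vertex subcube there are vertices whose complete resolution contains a small circle $a$ localized inside the RIII region, connected by a single arc to a neighboring vertex, so that $(\Crossings_0,u,v,c_i,a)$ is a cancellation data in the sense of \Definition{cancellation-data}. Two such cancellation data suffice on each side, and the decoration convention fixed in \Figure{R-III} is chosen precisely so that the cancellations can be performed in matching positions on the two sides. After carrying out these cancellations via \Lemma{cancel-prelim}, the surviving complex on each side is supported on a small set of Khovanov generators, and a natural bijection (the standard Bar-Natan/Khovanov bijection for RIII) identifies the surviving parts of $\OurCx(D)$ and $\OurCx(D')$.

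What remains is to verify that this bijection intertwines the zigzag differentials produced by the cancellations, in the sense of \Equation{zigzag}. At the $\dd_1$ level this is the classical RIII calculation in Khovanov homology. The new content is to check that the higher-order pieces of $\diffOur$, namely the Szabó terms $W^{i-1}\dd_i$ for $i\geq 2$ and the new terms $HW^{i-1}\hh_i$ for $i\geq 1$, also yield matching zigzag coefficients under the bijection. Since every canceling arrow has coefficient $1$ (whose inverse is again $1$ over $\F_2[H,W]$), each zigzag coefficient is an explicit finite sum of compositions of matrix elements of $\diffOur$ inside the three-crossing region; the trees-and-dual-trees description of $\cont{h}$ in \Definition{our-contribution-function}, together with the five-family description of $\cont{d}$ in \Definition{szabo-differential}, constrains these compositions to a finite list.

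The main obstacle is this final matching of higher-order zigzag coefficients, especially the contributions coming from $\hh$, which did not appear in Bar-Natan's nor \Szabo's original RIII arguments. The bookkeeping is delicate because $\hh$ is a sum of infinitely many graded pieces $\hh_i$, so in principle arbitrarily long zigzag compositions could arise; however, the requirement that every contributing resolution configuration be a disjoint union of trees and dual trees localized in the three-crossing region forces the list of relevant configurations to be finite. I would organize the analysis by exploiting the disoriented, duality, and extension rules enjoyed by $\cont{h}$ and $\cont{d}$ to collapse symmetric cases, and invoke \Lemma{dec-independence} to change the decoration at convenience whenever this simplifies the enumeration. Once this finite case check is complete, the resulting equality of zigzag differentials on the two sides yields the desired isomorphism in $\hocatpair(\F_2[H,W])$.
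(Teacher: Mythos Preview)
Your overall strategy---cancel two families of canceling pairs on each side using \Lemma{cancel-prelim} and then identify the resulting complexes---is exactly the paper's approach. The gap is in your verification of the zigzag differentials.

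You claim that the zigzag coefficients are determined by a finite list of configurations ``localized in the three-crossing region.'' This is false. The zigzag term coming from canceling the pair $C_2^+\leftrightarrow C_{23}$ has the form (in the notation of \Equation{zigzag}) $c_{12}c_{11}^{-1}c_{21}$, where $c_{21}$ is a matrix element of $\diffOur$ from a generator over $\{c_3\}\cup w_1$ to a generator over $\{c_2,c_3\}\cup w_2$. This matrix element can involve arbitrarily many crossings outside the Reidemeister region via the higher $\dd_i$ and $\hh_i$, so the relevant resolution configurations are neither localized nor finite in number. Your proposed enumeration therefore does not terminate, and \Lemma{dec-independence} (which concerns the vanishing of $\Commute{\hh}{\dd}$, not invariance) does not help here.

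The paper avoids any case analysis by a single extra observation. It records a \emph{third} cancellation data $(\{c_1,c_2,c_3\},\{c_2\},\{c_1,c_2\},c_1,a)$, not to cancel anything further, but to invoke \Lemma{cancel-prelim} once more: both $C_2^+\to C_{12}$ and $C_2^+\to C_{23}$ are pure $\dd_1$, hence bijections, and their composite is exactly the canonical identification $C_{12}\cong C_{23}$ coming from the isotopy of the underlying resolutions. Consequently the new zigzag differential $C_3\to C_{12}$ equals the old differential $C_3\to C_{23}$ transported through this identification, with no enumeration required. After the analogous cancellations on the $D'$ side, the two reduced complexes have the same generators and the same differentials because the corresponding decorated resolution configurations are canonically isotopic, so the naturality rule for $\cont{d}$ and $\cont{h}$ finishes the argument.
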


\setlength\tempfigdim{0.15\textheight}
\begin{figure}
  \[
  \xymatrix{
    &&\vcenter{\hbox{\includegraphics[height=\tempfigdim]{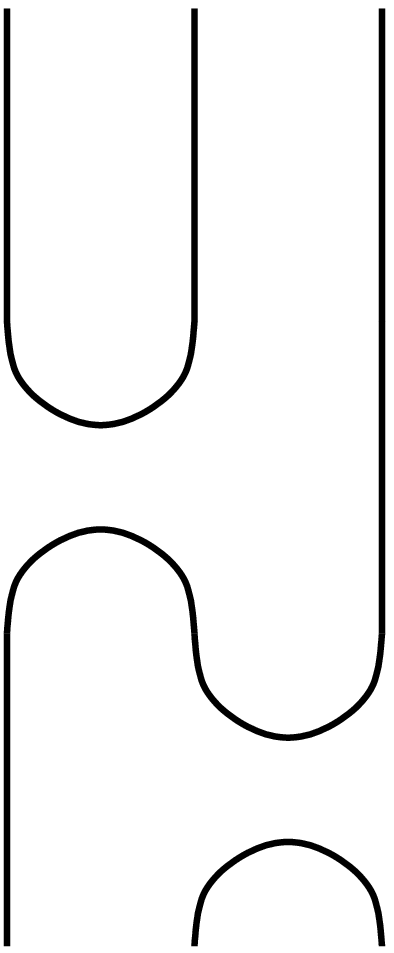}}}\ar[r]\ar[d]|(0.5)\hole&\vcenter{\hbox{\includegraphics[height=\tempfigdim]{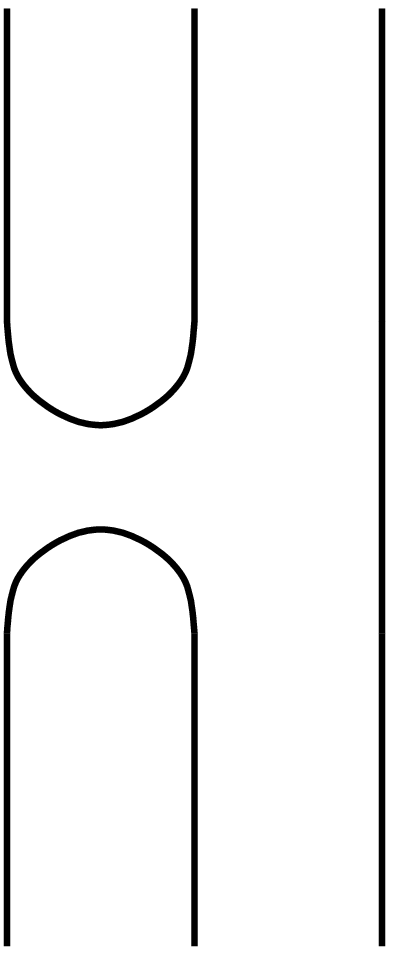}}}\ar[d]\\
    \vcenter{\hbox{\includegraphics[height=\tempfigdim]{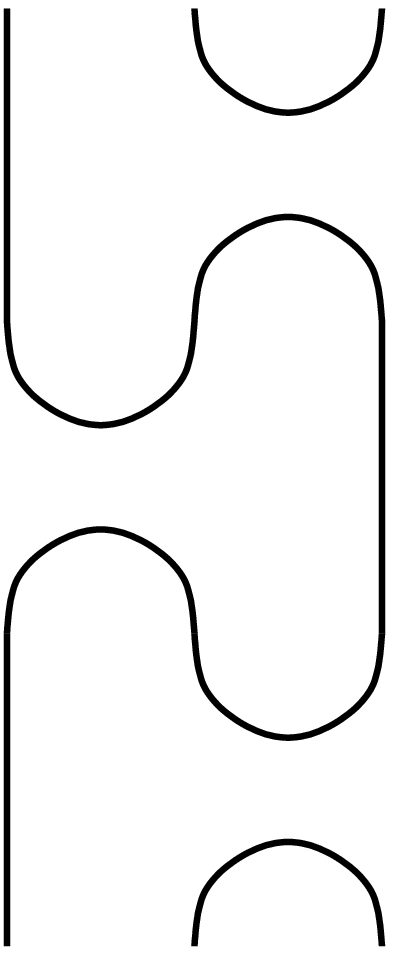}}}\ar[r]\ar@{=>}[d]\ar[urr]|(0.37)\hole&\vcenter{\hbox{\includegraphics[height=\tempfigdim]{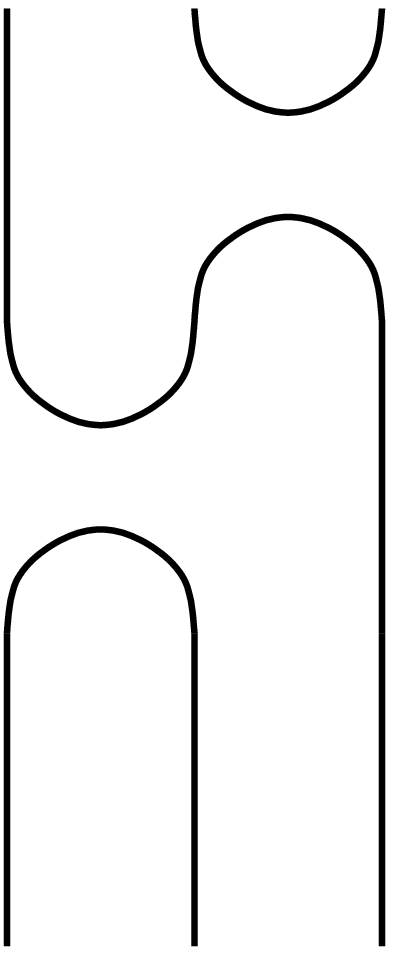}}}\ar[d]\ar@{.>}[r]\ar[urr]&\vcenter{\hbox{\includegraphics[height=\tempfigdim]{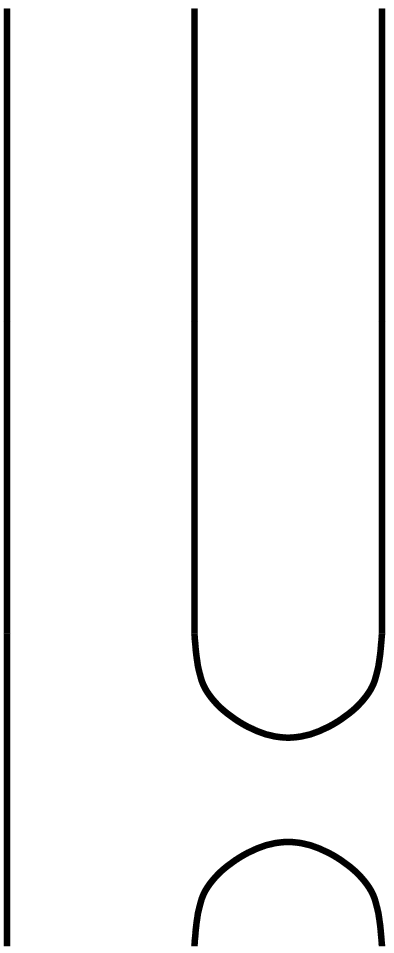}}}\ar[r]&\vcenter{\hbox{\includegraphics[height=\tempfigdim]{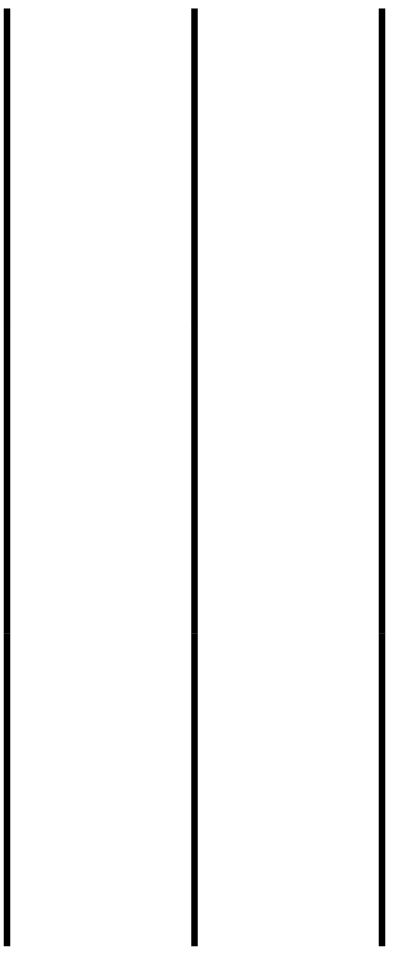}}}\\
    \vcenter{\hbox{\includegraphics[height=\tempfigdim]{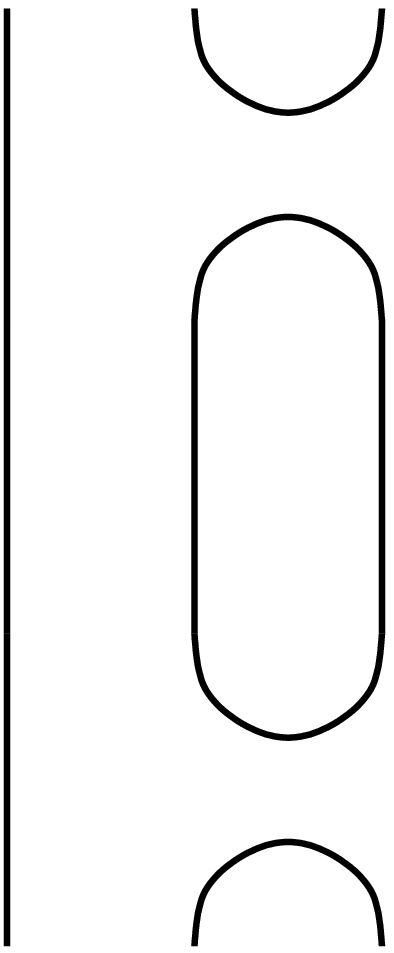}}}\ar@{=>}[r]\ar[urr]|(0.37)\hole|(0.5)\hole|(0.63)\hole&\vcenter{\hbox{\includegraphics[height=\tempfigdim]{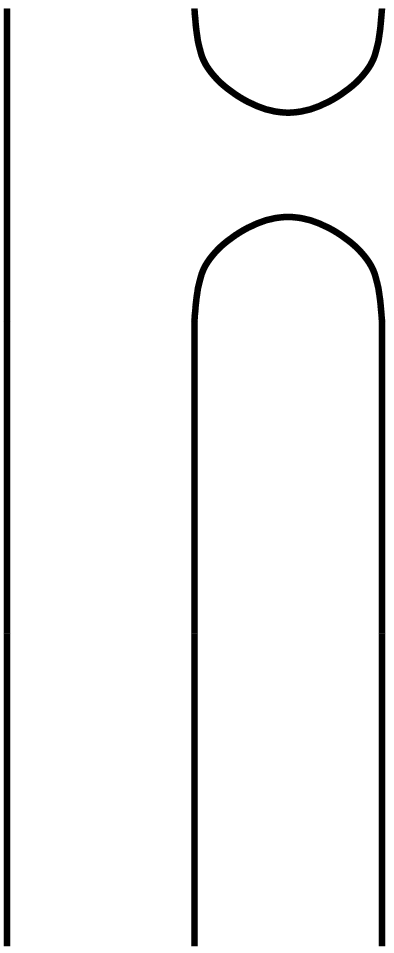}}}\ar[urr]
  }
  \]
  \caption{The partial cube of resolutions of $D$ for the RIII
    invariance. We cancel along the double arrows; the remainder is
    not a subquotient complex, so the cancellation produces the new
    dotted arrow. We can produce similar cancellations for $D'$ to
    arrive at the same diagram.}\label{fig:Reid3-cube}
\end{figure}

\begin{proof}
  The proof is slightly different from the previous ones. We will do
  cancellations as before, but on each of the diagrams $D$ and $D'$,
  and reduce both to the same complex. Let us describe the
  cancellations for $D$ in more detail.

  Let $c_1,c_2,c_3$ be the crossings of $D$ where the Reidemeister-III
  move takes place, numbered top to bottom (as per
  \Figure{R-III}). Doing the $0$-resolutions at $c_1$ and $c_3$, and
  the $1$-resolution at $c_2$ produces a complete circle, say $a$,
  contained in the neighborhood of $\{c_1,c_2,c_3\}$ where the
  Reidemeister-III move takes place. Then all three of
  $(\{c_1,c_2,c_3\},\emptyset,\{c_2\},c_2,a)$,
  $(\{c_1,c_2,c_3\},\{c_2\},\{c_1,c_2\},c_1,a)$, and
  $(\{c_1,c_2,c_3\},\{c_2\},\{c_2,c_3\},c_3,a)$ are cancellation data
  for $D$, see also \cite[Section~5.5]{Kho-kh-categorification} and
  \cite[Theorem~7.2]{Szab-kh-geometric}.

  Using \Lemma{cancel-prelim}, we first cancel the quotient complex of
  $\OurCx(D)$ spanned by all the canceling pairs for
  $(\{c_1,c_2,c_3\},\emptyset,\{c_2\},c_2,a)$. We then cancel all the
  canceling pairs for
  $(\{c_1,c_2,c_3\},\{c_2\},\allowbreak\{c_2,c_3\},\allowbreak
  c_3,a)$; this is neither a subcomplex nor a quotient complex, so
  this cancellation produces new zigzag differentials, as in
  \Equation{zigzag}. The new differentials go from the Khovanov
  generators living over $\{c_3\}\cup w_1$ to Khovanov generators
  living over $\{c_1,c_2\}\cup w_2$ as $w_1,w_2$ vary over subsets of
  $\Crossings(D)\setminus\{c_1,c_2,c_3\}$. This is shown by the dotted
  arrow in \Figure{Reid3-cube}.

  Let us analyze the new differentials in more detail. First set up
  some more notation. Let $C_2^+$, $C_3$, $C_{12}$, and $C_{23}$
  denote the subquotient complexes of $\OurCx(D)$ spanned by Khovanov
  generators living over $\{c_2\}\cup w$ with $a$ not appearing in the
  monomial, $\{c_3\}\cup w$, $\{c_1,c_2\}\cup w$, $\{c_2,c_3\}\cup w$,
  respectively, for arbitrary
  $w\subseteq\Crossings(D)\setminus\{c_1,c_2,c_3\}$. There is a
  natural identification $C_{12}\cong C_{23}$ since the resolutions
  $\AssRes{\{c_1,c_2\}\cup w}$ and $\AssRes{\{c_2,c_3\}\cup w}$ are
  identical outside a neighborhood of $a$, and are canonically
  isotopic to each other inside the neighborhood. The new differential
  is from $C_3$ to $C_{12}$, and we claim that it is identical (under
  the above identification) to the part of the old differential
  $\diffOur$ that went from $C_3$ to $C_{23}$. To see this, recall
  that both $(\{c_1,c_2,c_3\},\{c_2\},\{c_1,c_2\},c_1,a)$, and
  $(\{c_1,c_2,c_3\},\{c_2\},\{c_2,c_3\},c_3,a)$ are cancellation data
  for $D$. Therefore, using \Lemma{cancel-prelim}, only $\dd_1$
  contributes the part of the differential $\diffOur$ that goes from
  $C_2^+$ to $C_{12}$ or from $C_2^+$ to $C_{23}$; and in either case,
  it produces a bijection between the Khovanov generators in $C_2^+$
  with the Khovanov generators in $C_{12}$ or $C_{23}$; the induced
  bijection between the Khovanov generators in $C_{12}$ and $C_{23}$
  is easily seen to be the above identification. The new zigzag
  differential from $C_3$ to $C_{12}$ is obtained by composing the
  part of the old differential $\diffOur$ from $C_3$ to $C_{23}$, and
  then mapping $C_{23}$ to $C_{12}$ by the above bijection. This shows
  that the new differential is identical to the part of the old
  differential $\diffOur$ that went from $C_3$ to $C_{23}$.

  We can perform similar cancellations for the diagram $D'$. To wit,
  if $c'_1,c'_2,c'_3$ are the crossings of $D'$ where the
  Reidemeister-III move takes place, numbered top to bottom (as per
  \Figure{R-III}), then doing the $0$-resolutions at $c'_1$ and $c'_3$
  and the $1$-resolution at $c'_2$ produces a complete circle, say
  $a'$, contained in the neighborhood where the Reidemeister-III move
  takes place. Then we cancel all Khovanov generators for the
  cancellation data $(\{c'_1,c'_2,c'_3\},\emptyset,\{c'_2\},c'_2,a')$
  and
  $(\{c'_1,c'_2,c'_3\},\{c'_2\},\allowbreak\{c'_2,c'_3\},\allowbreak c'_3,a')$. Since we
  had decorated the diagrams $D$ and $D'$ coherently, it is
  straightforward to see that after performing these cancellations, we
  end up in an isomorphic picture. (The post-cancellation complexes
  for $D$ and $D'$ are identical outside the region where the
  Reidemeister-III move occurs, and inside the region, the
  corresponding resolutions are canonically isotopic to one another.)

  This establishes (after noting that the basepoint $p$ is not in the
  circle $a$) that
  $(\tuplethree{\OurCx^-(D,p)\{1\}}{\OurCx(D)}{\iota(D,p)})$ and
  $(\tuplethree{\OurCx^-(D',p)\{1\}}{\OurCx(D')}{\iota(D',p)})$ are
  isomorphic in $\hocatpair(\F_2[H,W])$.
\end{proof}

\begin{lemma}\label{lem:complete-moves}
  If $(D,p)$ and $(D',p')$ are two decorated pointed link diagrams
  representing isotopic pointed links, then they can be connected by
  some sequence of positive Reidemeister-I moves, Reidemeister-II
  moves, and Reidemeister-III moves, as described in the statements of
  Propositions~\ref{prop:R-I-inv}--\ref{prop:R-III-inv} and
  \Figure{Reid-moves}, their inverses, and isotopy in $S^2$.
\end{lemma}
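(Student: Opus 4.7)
The plan is to deduce the claim from the classical Reidemeister theorem in $S^2$, combined with bookkeeping to handle the basepoint and the restricted form of the allowed moves.

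First, I would invoke Reidemeister's theorem for unoriented links in $S^3$: two link diagrams in $S^2$ represent isotopic links if and only if they are related by a finite sequence of Reidemeister-I, II, and III moves together with ambient $S^2$ isotopy. Since every Reidemeister move is supported in a small open disk on $S^2$, a small perturbation of the sequence allows one to assume that every move occurs in a disk disjoint from the basepoint $p$, and that $p$ is carried freely between moves via the intervening $S^2$ isotopies.

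Second, I would verify that every move in the sequence can be realized by one of the moves allowed in the lemma. \Proposition{R-I-inv} permits an arbitrary strand orientation and an arbitrary decoration on the new crossing, so the \emph{positive} Reidemeister-I move, together with its inverse, covers every chirality of curl up to $S^2$ isotopy; the key point is that in $S^2$, as opposed to $\R^2$, the ``inside'' and ``outside'' of the small disk containing the curl are interchangeable by an ambient isotopy sliding through $\infty$. \Proposition{R-II-inv} similarly permits arbitrary orientation and decoration on its two new crossings, so every Reidemeister-II move is realized directly. \Proposition{R-III-inv} prescribes a specific decoration pattern on the three crossings of \Figure{R-III}; to realize an abstract Reidemeister-III whose local decorations differ, I would first insert a short sequence of auxiliary positive Reidemeister-I and Reidemeister-II moves adjacent to the R-III region, with decoration choices on the new crossings chosen so that after a planar isotopy the three relevant decorations match \Figure{R-III}; then apply the R-III move from \Proposition{R-III-inv}; and finally undo the auxiliary moves.

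Third, I would remark that the basepoint $p$ is preserved throughout, since every Reidemeister move occurs in a disk disjoint from $p$ and every $S^2$ isotopy can freely carry $p$ along its component.

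The step I expect to require the most care is the decoration adjustment for Reidemeister-III. One must verify that each of the possible decoration patterns at the three participating crossings can be brought to the pattern of \Figure{R-III} by a finite sequence of auxiliary Reidemeister-I and II moves with appropriately chosen decorations on the new crossings. This reduces to a finite combinatorial case analysis, made tractable by the freedom in the decoration choices already built into \Proposition{R-I-inv} and \Proposition{R-II-inv}.
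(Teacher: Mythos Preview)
Your overall strategy is sound and parallels the paper's, but there is one genuine error and one place where the paper is more explicit.

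The error is in your treatment of Reidemeister-I chirality. You claim that the two R-I variants (adding a positive versus a negative kink) are interchangeable via an $S^2$ isotopy through infinity. This is false: the writhe of a diagram is invariant under ambient $S^2$ isotopy, yet the two R-I variants change the writhe by $+1$ and $-1$ respectively, so no $S^2$ isotopy can convert one into the other. The correct fact---that one R-I variant, together with R-II and R-III, generates the other---is a standard refinement of Reidemeister's theorem, and the paper simply invokes it as such without further argument.

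For the decoration adjustment you flag as the hardest step, the paper's approach is cleaner than a case analysis at the R-III site. The paper observes that the decoration at \emph{any single crossing} can be flipped by a pair of R-II moves (do an R-II to create two new crossings, then undo an R-II using one new crossing and the original crossing; since the decorations on new crossings are freely chosen in \Proposition{R-II-inv}, the surviving crossing can be arranged to carry the opposite decoration). This single local move handles all decoration discrepancies uniformly.

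Finally, the paper treats the basepoint slightly differently from you: rather than appealing to a pointed Reidemeister theorem, it isolates the one genuinely new move---sliding the basepoint past a crossing---and cites Khovanov's observation that this move can itself be realized by ordinary Reidemeister moves away from the basepoint together with an $S^2$ isotopy. Your version is also correct provided you actually invoke the pointed Reidemeister theorem; the phrasing ``$S^2$ isotopy can freely carry $p$ along its component'' is ambiguous, since moving $p$ along the strand past a crossing is not an ambient isotopy of the pointed diagram.
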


\begin{proof}
  This is essentially Reidemeister's theorem which states that any two
  link diagrams for the same link can be connected by isotopy in
  $\R^2$ and the three Reidemeister moves (and in particular, we only
  need one variant for each of the Reidemeister I and III moves).

  In presence of a single basepoint, we need two additional moves as
  shown in \Figure{pointed-Reid}, corresponding to moving the
  basepoint past a strand. However, as observed in
  \cite[Section~3]{Kho-kh-patterns}, these moves may be achieved via
  the usual three Reidemeister moves away from the basepoint and
  isotopy in $S^2$.

  \setlength\tempfigdim{0.15\textwidth}
  \begin{figure}
    \[
    \xymatrix@C=0.1\textwidth{
      \vcenter{\hbox{\includegraphics[width=\tempfigdim]{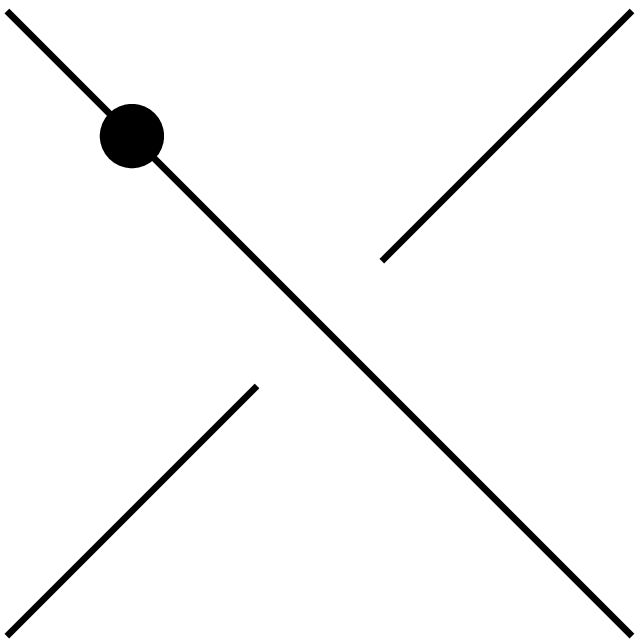}}}\ar@{<->}[r]&
      \vcenter{\hbox{\includegraphics[width=\tempfigdim]{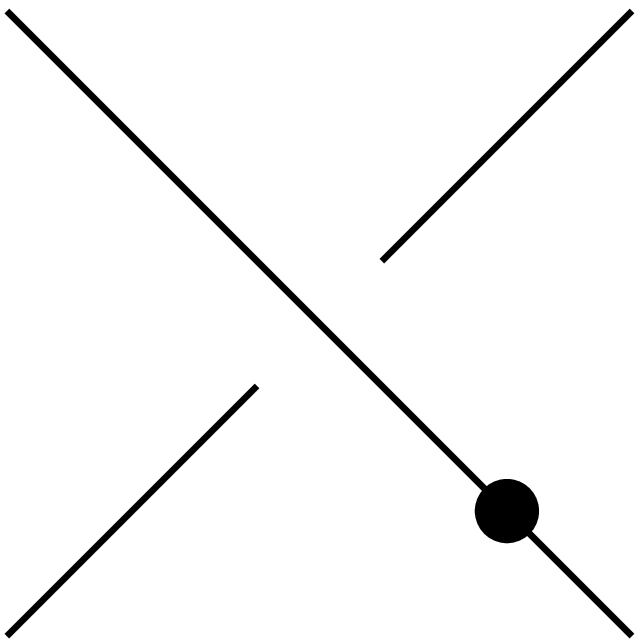}}}&
      \vcenter{\hbox{\includegraphics[width=\tempfigdim]{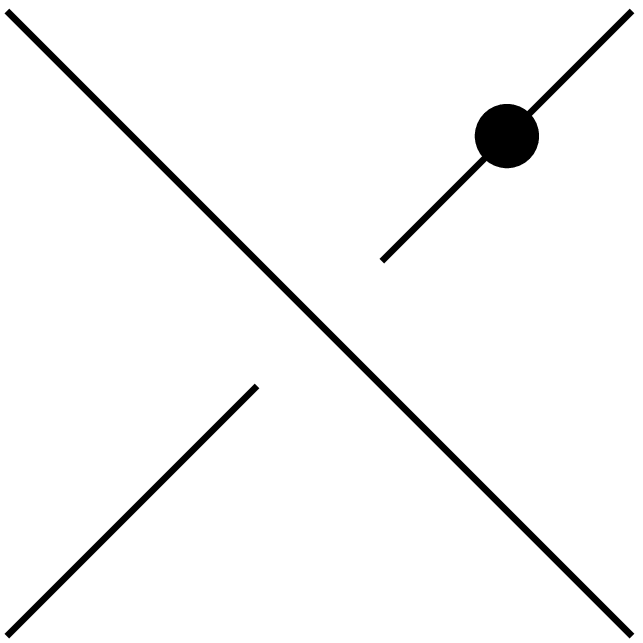}}}\ar@{<->}[r]&
      \vcenter{\hbox{\includegraphics[width=\tempfigdim]{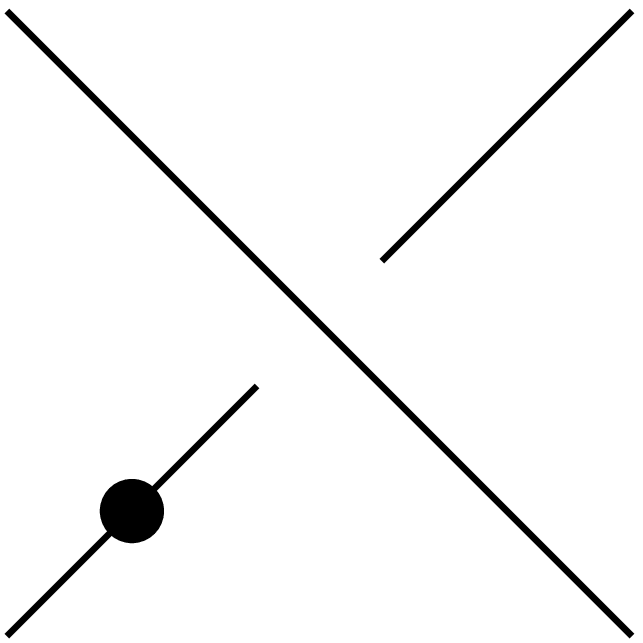}}} 
    }
    \]
    \caption{Additional moves for the basepoint.}\label{fig:pointed-Reid}
  \end{figure}
  
  \setlength\tempfigdim{0.25\textheight}
  \begin{figure}
    \[
    \xymatrix@C=0.1\textwidth{
      \vcenter{\hbox{\includegraphics[height=\tempfigdim]{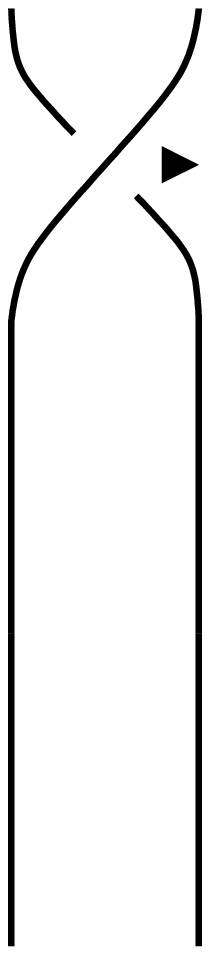}}}\ar@{<->}[r]&
      \vcenter{\hbox{\includegraphics[height=\tempfigdim]{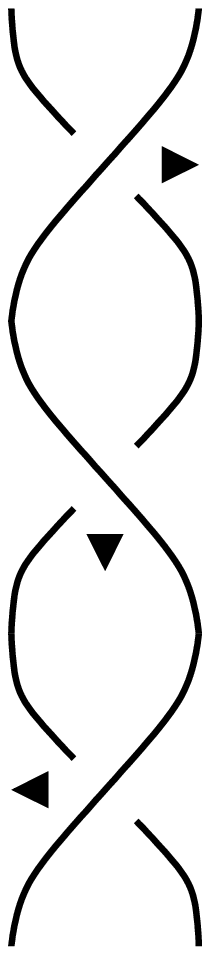}}}\ar@{<->}[r]&
      \vcenter{\hbox{\includegraphics[height=\tempfigdim]{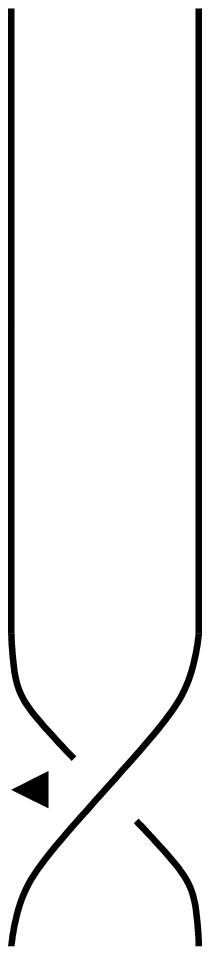}}} 
    }
    \]
    \caption{Changing the decoration at a single crossing.}\label{fig:decor-Reid}
  \end{figure}

  Finally, with regard to decorations, we need an additional move
  which changes the decoration at a single crossing. This move may be
  achieved by two Reidemeister II moves, as shown in
  \Figure{decor-Reid}.
\end{proof}

\begin{proof}[Proof of \Proposition{main-invariance}]
  This is immediate from \Lemma{complete-moves} and
  Propositions~\ref{prop:R-I-inv}--\ref{prop:R-III-inv}. The only
  thing to note is that the contribution functions $\cont{d}$ and
  $\cont{h}$ satisfy the naturality rule, i.e., they are preserved
  under isotopy in $S^2$; consequently, the chain complex $\OurCx$
  defined using $\cont{d}$ and $\cont{h}$ also remains invariant under
  isotopy in $S^2$.
\end{proof}

\section{Properties}\label{sec:properties}

From this section onwards, we will restrict to the unpointed case, and
only study the unreduced version $\OurCx$. In
Propositions~\ref{prop:R-I-inv}--\ref{prop:R-III-inv}, we associated
isomorphisms in $\hocat(\F_2[H,W])$ corresponding to knot
isotopy. We will now construct morphisms in
$\hocat(\F_2[H,W])$ for three additional local moves.

\begin{definition}\label{def:index-0-2}
  Fix a decorated link diagram $D$, and let $D'$ be the decorated link
  diagram obtained from $D$ by adding a small unknotted circle $a$
  disjoint from the $D$. (The transformation $D\to D'$ is usually
  called a `birth', and it corresponds to a link cobordism in
  $\R^3\times[0,1]$ with a single index-zero critical point. The
  transformation $D'\to D$ is usually called a `death', and it
  corresponds to a link cobordism in $\R^3\times[0,1]$ with a single
  index-two critical point.) We have a decomposition
  \[
  \OurCx(D')\cong\OurCx(D')_0\oplus\OurCx(D')_1,
  \]
  where $\OurCx(D')_0$ is the direct summand of $\OurCx(D')$ where the
  circle $a$ does not appear in the monomials for the Khovanov
  generators, while $\OurCx(D')_1$ is the direct summand of
  $\OurCx(D')$ where the circle $a$ does appear in the monomials. Each
  of $\OurCx(D')_0$ and $\OurCx(D')_1$ is identified with $\OurCx(D)$,
  after shifting the quantum gradings correctly.

  To a birth, we associate a morphism from $\OurCx(D)$ to $\OurCx(D')\{1\}$
  in $\hocat(\F_2[H,W])$ as the composition
  \[
  \OurCx(D)\cong\OurCx(D')_0\{1\}\into\OurCx(D')_0\{1\}\oplus\OurCx(D')_1\{1\}\cong\OurCx(D')\{1\},
  \]
  where the inclusion is an inclusion as a direct summand.

  To a death, we associate a morphism from $\OurCx(D')$ to $\OurCx(D)\{1\}$
  in $\hocat(\F_2[H,W])$ as the composition
  \[
  \OurCx(D')\cong\OurCx(D')_0\oplus\OurCx(D')_1\onto\OurCx(D')_1\cong\OurCx(D)\{1\},
  \]
  where the surjection is a projection to a direct summand.
\end{definition}

\begin{definition}\label{def:index-1}
  Assume two decorated link diagrams $D_0$ and $D_1$ are related as
  shown in \Figure{saddle}. That is, assume that there is a
  decorated link diagram $D$ with a distinguished crossing $c$, so
  that resolving $c$ by the $i$-resolution produces $D_i$, for
  $i=0,1$; and further assume the link diagrams $D_0$ and $D_1$ can
  be, and are, oriented coherently. (The transformation $D_0\to D_1$
  is usually called a `saddle', and it corresponds to a link cobordism
  in $\R^3\times[0,1]$ with a single index-one critical point.) After
  an overall shift of the bigradings (which may depend on the
  diagrams), there is an identification
  \[
  \OurCx(D)\cong\Cone(f\from\OurCx(D_0)\to\OurCx(D_1)\{-1\}).
  \]
  where $f$ is the part of the differential $\diffOur$ for $\OurCx(D)$
  that goes from the $0$-resolution at $c$ to the $1$-resolution at
  $c$. To the saddle move $D_0\to D_1$, we associate the morphism $f$
  from $\OurCx(D_0)$ to $\OurCx(D_1)\{-1\}$ in
  $\hocat(\F_2[H,W])$.
\end{definition}

\setlength\tempfigdim{0.15\textwidth}
\begin{figure}
    \[
    \xymatrix@C=0.1\textwidth{
      \vcenter{\hbox{\includegraphics[width=\tempfigdim]{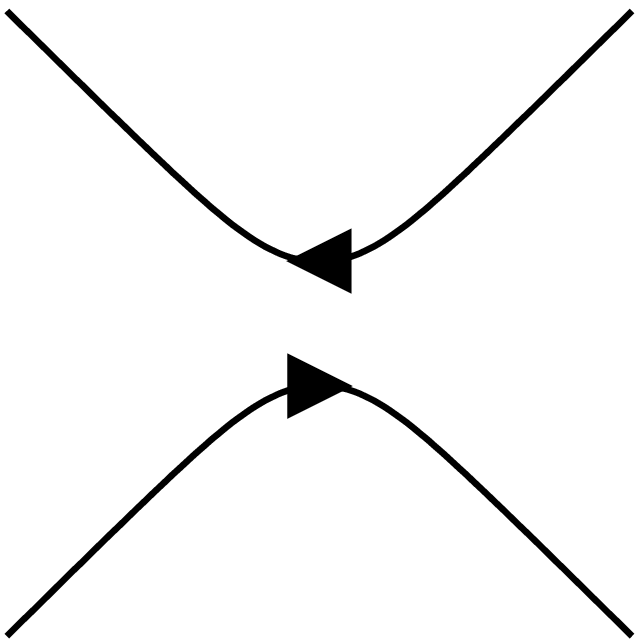}}}\ar[r]&
      \vcenter{\hbox{\includegraphics[width=\tempfigdim]{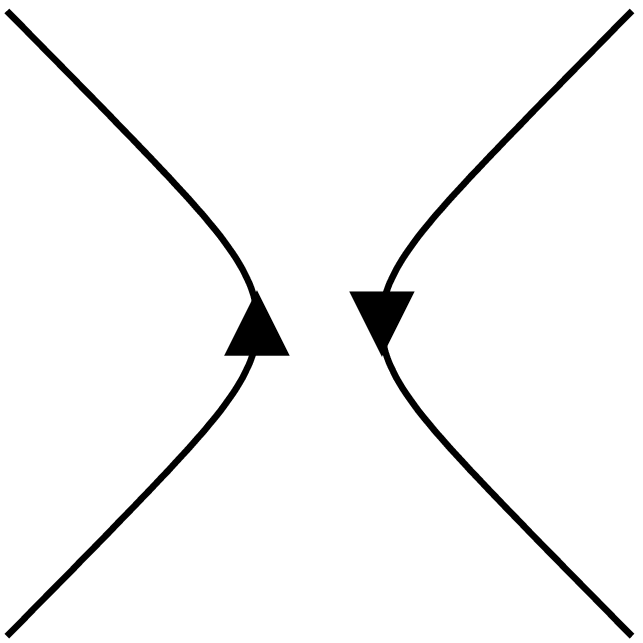}}}
    }
    \]
    \caption{The saddle cobordism from $D_0$ to $D_1$. Note that the
      link diagrams are oriented coherently.}\label{fig:saddle}
  \end{figure}

  To a link cobordism in $\R^3\times[0,1]$, one can associate maps on
  Khovanov chain complex \cite{Kho-kh-categorification,
    Jac-kh-cobordisms, Kho-kh-cob, CMW-kh-functoriality} and on the
  Bar-Natan chain complex \cite{Bar-kh-tangle-cob,
    CMW-kh-functoriality}. These maps are defined by composing maps
  associated to elementary moves, namely the three Reidemeister moves,
  birth, death, and saddle. We may also use our maps from
  Propositions~\ref{prop:R-I-inv}--\ref{prop:R-III-inv} and
  Definitions~\ref{def:index-0-2}--\ref{def:index-1} to define map
  associated to link cobordisms presented as a sequence of elementary
  moves. We will not prove that this map is well-defined in
  $\hocat(\F_2[H,W])$, that is, it only depends on the isotopy class
  of the link cobordism, and not on a choice of presentation as a
  sequence of elementary moves. Nevertheless, when we specialize
  $W=0$, we get the existing link cobordism map on the Bar-Natan
  theory.

\begin{proposition}\label{prop:induced-cob-map-same}
  For any link cobordism in $\R^3\times[0,1]$ viewed as a sequence of
  elementary moves, the maps on $\OurCx$ defined in
  Propositions~\ref{prop:R-I-inv}--\ref{prop:R-III-inv} and
  Definitions~\ref{def:index-0-2}--\ref{def:index-1} induce the
  standard link cobordism maps in the Bar-Natan theory $\BNCx$.
\end{proposition}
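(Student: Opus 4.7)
The plan is to observe that the specialization $W=0$ defines a functor from $\hocat(\F_2[H,W])$ to $\hocat(\F_2[H])$ which sends $\OurCx$ to $\BNCx$ (since $\diffOur|_{W=0} = \dd_1 + H\hh_1 = \diffBN$), and then check that each of the six elementary constructions in our theory is sent, under this specialization, to the corresponding construction in the Bar-Natan theory as set up in~\cite{Bar-kh-tangle-cob, CMW-kh-functoriality}. Once this is established move-by-move, the assertion for an arbitrary cobordism (presented as a composition of elementary moves) follows functorially.

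The birth, death, and saddle cases are essentially formal. The decomposition $\OurCx(D')\cong\OurCx(D')_0\oplus\OurCx(D')_1$ in \Definition{index-0-2} is defined purely in terms of whether the new circle $a$ appears in a monomial; this decomposition is respected by setting $W=0$, so the inclusion of $\OurCx(D')_0\{1\}$ (for a birth) and the projection onto $\OurCx(D')_1\{1\}$ (for a death) specialize precisely to the standard Bar-Natan maps (which are also defined by the corresponding decomposition of $\BNCx(D')$). For the saddle map in \Definition{index-1}, the map $f$ is by construction the off-diagonal part of $\diffOur$ between the two resolutions at~$c$; setting $W=0$ replaces $\diffOur$ by $\diffBN$, and $f$ becomes exactly the corresponding off-diagonal part of $\diffBN$, which is the standard Bar-Natan saddle map.

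For the three Reidemeister moves, the proofs of Propositions~\ref{prop:R-I-inv}--\ref{prop:R-III-inv} proceed by canceling canceling-pair subcomplexes via \Lemma{cancel-prelim}. The key point to extract is that, for any cancellation data $(\Crossings_0,u,v,c,a)$, the only matrix entries of $\diffOur$ between canceling pairs come from $\dd_1$ (the proof of \Lemma{cancel-prelim} shows that neither the higher $\dd_i$, $i\geq 2$, nor any $\hh_i$ can contribute, because the circle~$a$ has exactly one incident arc and fails the required starting-monomial condition for trees, dual trees, and the Type-B/C/D/E configurations). Consequently the change-of-basis maps produced by the cancellation are identical to the ones used in Bar-Natan's theory, which also cancels the same canceling pairs via the same $\dd_1$-contributions. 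In particular, the induced identifications with $\OurCx(D)$ in the RI and RII cases are compatible with the $W=0$ specialization.

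The main subtlety, as expected, is RIII, where the cancellation of a non-subquotient pair produces a zigzag differential as in \Equation{zigzag}. Here the explicit analysis in the proof of \Proposition{R-III-inv} identifies this zigzag as a composition involving the $\dd_1$-induced bijection between $C_{12}$ and $C_{23}$, together with the pre-existing component of $\diffOur$ from $C_3$ to $C_{23}$. Setting $W=0$ replaces the latter by the corresponding component of $\diffBN$, while the $\dd_1$-induced bijection is unchanged; hence the zigzag specializes to precisely the zigzag appearing in the Bar-Natan cancellation, matching the standard Bar-Natan RIII chain homotopy equivalence~\cite{Bar-kh-tangle-cob}. Assembling the six elementary verifications and using that the Bar-Natan cobordism map is itself built as the composition of Bar-Natan maps for the elementary pieces completes the proof.
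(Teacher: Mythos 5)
Your proposal is correct and follows the same underlying idea as the paper's proof, namely that the specialization $W=0$ carries $\OurCx$ to $\BNCx$ and that each elementary-move map was \emph{defined} so as to be compatible with this specialization. The paper itself treats this as immediate---its entire proof is a one-sentence appeal to the fact that the Bar-Natan cobordism maps of \cite[Section~9.3]{Bar-kh-tangle-cob} are themselves specializations of the maps in \cite[Sections~4.3 and~8.1]{Bar-kh-tangle-cob}, which are constructed by exactly the same Gaussian-elimination/cancellation procedure used in Propositions~\ref{prop:R-I-inv}--\ref{prop:R-III-inv}. What you have done is expand that appeal into a move-by-move verification: the birth/death/saddle cases are formal as you say (the decomposition by whether the new circle appears in a monomial, and the off-diagonal part of the differential, both commute with setting $W=0$), and for the Reidemeister moves you correctly isolate the key fact from \Lemma{cancel-prelim}---that only $\dd_1$ contributes between canceling pairs, since the distinguished circle $a$ has a single incident arc and cannot satisfy the starting-monomial condition needed for any tree, dual tree, or higher \Szabo{} configuration---so the cancellation data and resulting change-of-basis coincide with Bar-Natan's after setting $W=0$, including the zigzag appearing in the RIII case. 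This is more detail than the paper gives, but it is the same argument, and the detail is a reasonable thing to supply given that the paper itself declines to reproduce Bar-Natan's definitions.
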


\begin{proof}
  This is immediate from the definitions (which we have not given
  here) of the Bar-Natan link cobordism maps, which are
  specializations \cite[Section~9.3]{Bar-kh-tangle-cob} of maps
  defined in a more general setting
  \cite[Sections~4.3~and~8.1]{Bar-kh-tangle-cob}.
\end{proof}

Next, we will use these link cobordism maps to prove some structure
theorems for the total homology. Before proceeding, let us collect a
few facts about the Bar-Natan theory.
\begin{enumerate}[label=(BN-\arabic*), ref=(BN-\arabic*)]
\item\label{item:BN-total-rank} For an $l$-component link $L$, the
  homology of the localized Bar-Natan complex $\lBNCx$ is $2^l$ copies
  of $\F_2[H,H^{-1}]$, while the homology of the filtered Bar-Natan
  complex $\fBNCx=\OurCx/\{H=1,W=0\}$ has rank $2^l$
  \cite{Lee-kh-endomorphism, Turner-kh-BNSeq}; in either case, the
  generators correspond to the orientations of $L$, and the
  $\gr_h$-preserving reduction $\lBNCx\to\fBNCx$ preserves this
  correspondence. (Since $\fBNCx\otimes\F_2[\Z]$ can be identified
  with $\lBNCx$, as in the proof of \Proposition{total-rank}, the two
  statements are equivalent.) In more detail, consider some
  orientation $o$ on the link $L$, presented as a link diagram $D$ in
  the plane $\R^2$. Fix a checkerboard coloring of the complement of
  $D$ in $\R^2$; for concreteness, one usually decrees the unbounded
  region to be colored white. Consider the oriented resolution of $D$
  according to the orientation $o$, and let $\{x_1,\dots,x_k\}$ be the
  complete circles of the resolution. Then each of the individual
  circles $\{x_i\}$ are also oriented according to $o$. Consider the
  one-variable polynomial (over $\F_2[H]$) in $x_i$, which is $x_i$ or
  $H+x_i$, depending on whether $x_i$ is oriented as the boundary of a
  black region or a white region, respectively; then consider the
  product of all these $k$ one-variable polynomials. This $k$-variable
  polynomial may be viewed as a linear combination over $\F_2[H]$ of
  square-free monomials in the circles $\{x_i\}$, and thereby viewed
  as a linear combination of the Khovanov generators over this
  oriented resolution, see also~\ref{item:Khovanov-generators}
  from \Section{background}. This linear combination represents the
  generator corresponding to $o$ in $\lBNCx$. Let $g_{\BN}(o)$ be the
  corresponding generator in $H_*(\fBNCx)$.

  In particular, note that the homological grading of $g_{\BN}(o)$ is
  given by the linking number 
  \[
  \gr_h(g_{\BN}(o))=2\,\mathrm{lk}(L_0,L\setminus L_0)
  \]
  where $L_0\subseteq L$ is the sublink where the orientation $o$
  agrees with the starting orientation of $L$ (the one that was used
  to define the homological grading $\gr_h$ in the first place,
  cf.~\ref{item:Kh-hom-grading} from \Section{background}), with the
  following understanding: the linking number is computing after
  orienting both $L_0$ and $L\setminus L_0$ according to the starting
  orientation of $L$; and the linking number with the empty link is
  zero.
\item\label{item:Ras-map} For any oriented link cobordism from $L_1$ to $L_2$ in
  $\R^3\times[0,1]$ without any closed components, and for any
  orientation $o$ on $L_1$, the Bar-Natan link cobordism map on
  $\fBNCx$ acts as follows on the generator $g_{\BN}(o)$,
  \[
  g_{\BN}(o)\mapsto\!\!\!\!\!\sum_{\substack{\text{$o'$ orientation
        on $L_2$}\\\mathrlap{\text{$o$ and $o'$ extend to an orientation on the
        cobordism}}\phantom{\text{$o'$ orientation
        on $L_2$}}}}\!\!\!\!\! g_{\BN}(o'),
  \]
  see~\cite{Ras-kh-slice}, see also \cite{LS-rasmussen}. 
\item\label{item:Ras-s} For a knot $K$, the Rasmussen $s$-invariant of
  the knot (over $\F_2$) is defined as
  \begin{align*}
    s(K)&=\max\set{n}{\Filt_n\BNCx(K)\text{ contains a representative for }g_{\BN}(o)}+1\\
    &=\max\set{n}{\Filt_n\BNCx(K)\text{ contains a representative for
      }g_{\BN}(-o)}+1\\
    &=\max\set{n}{\Filt_n\BNCx(K)\text{ contains a representative for }g_{\BN}(o)+g_{\BN}(-o)}-1,
  \end{align*}
  where $o$ is any orientation on $K$ and $\Filt_n\BNCx(K)$ is the
  subcomplex of $\BNCx(K)$ supported in quantum grading $n$ or more.
  This was originally defined over any field of characteristic
  different from $2$ in~\cite{Ras-kh-slice}, and extended to $\F_2$ in
  \cite{LS-rasmussen}.
\end{enumerate}

\begin{proposition}\label{prop:total-rank}
  Fix any $l$-component link $L$. The following hold:
  \begin{enumerate}[label=(R-\arabic*), ref=(R-\arabic*)]
  \item\label{item:rank-Hf} The homology of $\HfOurCx=\OurCx/\{H=1\}$
    is isomorphic to $2^l$ copies of $\F_2[W]$.
  \item\label{item:rank-Hl} The homology of $\HlOurCx$ is isomorphic
    to $2^l$ copies of $\F_2[H,H^{-1},W]$
  \item\label{item:rank-f} The homology of $\fOurCx=(\Cx,\dd+\hh)$ has
    rank $2^l$.
  \end{enumerate}
  In each case, the $2^l$ generators are in a canonical correspondence
  with the orientations of $L$ (as was the case for $\lBNCx$ and
  $\fBNCx$, cf.~\ref{item:BN-total-rank}), and the reductions
  \[
  \xymatrix{  
    \HlOurCx\ar[r]\ar[d]&\HfOurCx\ar[r]\ar[d]&\fOurCx\\
    \lBNCx\ar[r]&\fBNCx
  }
  \]
  preserve this correspondence, and the left four reductions (the
  ones forming the square) also preserve the homological grading.
\end{proposition}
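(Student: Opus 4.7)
My plan is to establish (R-2) first via a $W$-adic spectral sequence, then deduce (R-1) and (R-3) by specialization. Filter $\HlOurCx$ by $W$-degree: $F^p = W^p \cdot \HlOurCx$. Since $\HlOurCx / W \HlOurCx = \lBNCx$, the $E_0$-page of the associated spectral sequence is $\lBNCx \otimes_{\F_2} \F_2[W]$ with differential $\dd_1 + H\hh_1$. By the Bar-Natan rank computation \ref{item:BN-total-rank}, $\HF(\lBNCx) \cong \F_2[H,H^{-1}]^{\oplus 2^l}$ with canonical generators $g_{\BN}(o)$ indexed by orientations of $L$; hence $E_1 \cong \F_2[H,H^{-1},W]^{\oplus 2^l}$, giving the upper bound $\rank_{\F_2[H,H^{-1},W]} \HF(\HlOurCx) \leq 2^l$.

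The main (and hardest) step is to produce $2^l$ linearly independent cycles in $\HlOurCx$ lifting the $g_{\BN}(o)$. The natural candidate uses the same product formula as Bar-Natan: $g(o) = \prod_{x_i \in Z(\AssRes{u_o})} (x_i + H\epsilon_i(o))$ in the oriented resolution $\AssRes{u_o}$, where $\epsilon_i(o) \in \{0,1\}$ records the checkerboard labels from \ref{item:BN-total-rank}. Since $\diffOur = \sum_{i \geq 1} W^{i-1}(\dd_i + H\hh_i)$ and $(\dd_1 + H\hh_1) g(o) = 0$ by the Bar-Natan argument, I would try to verify $(\dd_i + H\hh_i) g(o) = 0$ for every $i \geq 2$ by a case analysis on the Szabo configurations of types A--E from \Figure{zoltan-configurations} and the tree/dual-tree configurations from \Definition{our-contribution-function} that can originate from a Khovanov generator supported on $\AssRes{u_o}$; the hope is that the checkerboard labels $\epsilon_i(o)$ force the higher-order contributions to cancel in pairs, paralleling the classical Bar-Natan argument. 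As a fallback, I can invoke the elementary-move chain maps from Propositions~\ref{prop:R-I-inv}--\ref{prop:R-III-inv} and Definitions~\ref{def:index-0-2}--\ref{def:index-1}: starting from the unlink $U_l$ (where $\OurCx(U_l)$ has trivial differential and each $g(o)$ is a literal monomial cycle), transport through a cobordism $\Sigma: U_l \to L$; by \Proposition{induced-cob-map-same} combined with \ref{item:Ras-map}, the $W = 0$ reductions are the Bar-Natan cobordism images, which span $\HF(\lBNCx)$ as $\Sigma$ and the chosen orientation on $U_l$ vary.

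Combining the two bounds gives $\HF(\HlOurCx) \cong \F_2[H,H^{-1},W]^{\oplus 2^l}$ freely generated by $\{g(o)\}$, with the orientation correspondence built in. For (R-1), the same $W$-adic argument applied to $\HfOurCx = \OurCx/\{H=1\}$ has $E_1 \cong \HF(\fBNCx) \otimes \F_2[W] \cong \F_2[W]^{\oplus 2^l}$ via \ref{item:BN-total-rank}, and the $g(o)$ descend under $H \mapsto 1$; alternatively (R-1) follows from (R-2) using the short exact sequence $0 \to \HlOurCx \xrightarrow{H-1} \HlOurCx \to \HfOurCx \to 0$ together with freeness of $\HF(\HlOurCx)$. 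For (R-3), the analogous sequence $0 \to \HfOurCx \xrightarrow{W-1} \HfOurCx \to \fOurCx \to 0$ combined with freeness of $\HF(\HfOurCx)$ over $\F_2[W]$ immediately yields $\HF(\fOurCx) \cong \F_2^{\oplus 2^l}$. The compatibility with the various reductions to the Bar-Natan theory is automatic from the formula for $g(o)$, and the $\gr_h$-preservation of the left square of reductions is built in since the $g(o)$ live in the same $\gr_h$ as $g_{\BN}(o)$. The principal obstacle is the cycle verification in the middle paragraph: checking combinatorially that the Bar-Natan canonical generator survives as a cycle for the full $\diffOur$, or equivalently that the $W$-adic spectral sequence degenerates at $E_1$, is where the bulk of the technical work lies.
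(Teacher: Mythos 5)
Your setup is sound: the $W$-adic filtration on $\HfOurCx$ gives an $E_1$-page isomorphic to $H_*(\fBNCx)\otimes\F_2[W]$, which yields the rank-$2^l$ upper bound, and your arguments relating (R-1), (R-2), (R-3) (via the localization isomorphism and the $W-1$ mapping cone) match the paper's reductions. The real content is the $E_1$-degeneracy, and there you offer two plans, both of which have genuine gaps that the paper's actual proof avoids.

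Your primary plan — verify directly that the canonical Bar-Natan cycle $g(o)=\prod_i(x_i+H\epsilon_i(o))$ is killed by every $\dd_i+H\hh_i$ — is a substantial combinatorial assertion, and the paper nowhere claims or uses it. Indeed, the proof of \ref{item:s-agrees} in Proposition~\ref{prop:new-s-properties} explicitly starts from a Bar-Natan cycle $c_0$ and then inductively builds correction terms $c_1,c_2,\dots$ to obtain a $\diffOur$-cycle, invoking at each step that $H_*(\fBNCx)$ is concentrated in $\gr_h=0$ (a fact special to knots); this strongly suggests $g(o)$ alone is not expected to be a $\diffOur$-cycle in general, and the knot-only grading trick does not apply to links, where the $g_{\BN}(o)$ live in gradings $2\,\mathrm{lk}(L_0,L\setminus L_0)$ spread across several values. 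Your fallback plan — transport cycles along a cobordism $\Sigma\colon U_l\to L$ — could in principle supply the lower bound, but it hinges on the existence of an orientable cobordism with $\pi_0(U_l)\to\pi_0(\Sigma)$ and $\pi_0(L)\to\pi_0(\Sigma)$ both bijective (and without closed components, so that \ref{item:Ras-map} applies), which you do not establish and which is not obvious. The paper's proof of Lemma~\ref{lem:spectral-sequence-collapse} sidesteps both difficulties by going the opposite direction: it constructs a cobordism $S$ from $L$ to a disjoint union $L'$ of Hopf links and unknots using only split saddles (so $\pi_0(L)\to\pi_0(S)$ is a bijection and $\pi_0(L')\to\pi_0(S)$ is a surjection, both easy to arrange), proves degeneracy for $L'$ by a direct hands-on decomposition of $\HfOurCx(\text{Hopf})$ into summands whose $\fBNCx$-generators share a homological grading, and then uses \ref{item:Ras-map} to show the induced map of spectral sequences is injective on $E_1$, forcing degeneracy for $L$. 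You should either prove the cycle identity $(\dd_i+H\hh_i)(g(o))=0$ outright, or establish the existence of your $U_l\to L$ cobordism, or adopt the paper's model-link-plus-injectivity strategy.
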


Towards this end, we will need to understand the spectral sequence
$H_*(\HfOurCx/\{W=0\})\otimes\F_2[W]\rightrightarrows H_*(\HfOurCx)$. The
chain complex $\HfOurCx$ is singly graded by $\gr_h$, and carries a
filtration by powers of $W$. Its associated graded object is
isomorphic to
\[
\HfOurCx/\{W=0\}\otimes\F_2[W]=\fBNCx\otimes\F_2[W],
\]
whose homology is $2^l$ copies of $\F_2[W]$, via
\ref{item:BN-total-rank}. The filtrations induce a spectral sequence
over $\F_2[W]$ (see for example~\cite[Theorem~2.6]{McC-top-guide}) starting at the homology of the
associated graded object, and converging to the homology of
$\HfOurCx$. Since the complex $\HfOurCx$ is finitely generated over
$\F_2[W]$, the spectral sequence is forced to collapse after finitely
many pages. We will in fact show that the spectral sequence collapses
immediately.

\begin{lemma}\label{lem:spectral-sequence-collapse}
  The above spectral sequence
  $H_*(\fBNCx)\otimes\F_2[W]\rightrightarrows H_*(\HfOurCx)$
  has no higher differentials.
\end{lemma}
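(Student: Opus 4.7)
The plan is to show that every Bar-Natan generator $g_{\BN}(o)\in E_1^{0,*}=H_*(\fBNCx)$ is a permanent cycle of the spectral sequence. Once this is established, collapse is automatic: multiplication by $W$ is a chain map on $\HfOurCx$ shifting the $W$-filtration by one, so it induces endomorphisms of each $E_r$ commuting with $d_r$, which makes $d_r$ an $\F_2[W]$-linear map. Since $E_1=H_*(\fBNCx)\otimes\F_2[W]$ is freely generated over $\F_2[W]$ by the $2^l$ classes $g_{\BN}(o)$ (property~\ref{item:BN-total-rank}), the vanishing of every $d_r$ on these generators forces $d_r\equiv 0$ for all $r\geq 1$.

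To lift each $g_{\BN}(o)$ to a genuine cycle in $\HfOurCx$, I would use a Seifert cobordism. Fix a diagram $D$ for $(L,o)$ and let $D_o$ be its oriented resolution (resolve each crossing according to its sign and $o$); then $D_o$ is a crossingless diagram for an oriented unlink $(U_o,o_U)$, and there is a canonical oriented cobordism $\Sigma_o$ from $(U_o,o_U)$ to $(L,o)$ consisting of one oriented saddle per crossing of $D$, with no births or deaths. Viewed as a sequence of saddle moves, $\Sigma_o$ induces a chain map $F^H_{\Sigma_o}\colon \HfOurCx(D_o)\to\HfOurCx(D)$ by using \Definition{index-1} and then specializing $H=1$. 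Because $D_o$ has no crossings, every $\dd_i$ and $\hh_i$ vanishes on $\HfOurCx(D_o)$, so the explicit Bar-Natan cycle $\tilde g_{\BN}(o_U)\in\fBNCx(D_o)\subseteq \HfOurCx(D_o)$ from property~\ref{item:BN-total-rank} is automatically a cycle there. Set $\tilde G(o)\defeq F^H_{\Sigma_o}(\tilde g_{\BN}(o_U))$; this is a cycle in $\HfOurCx(D)$.

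The remaining task is to identify the reduction of $\tilde G(o)$ modulo $W$. By \Proposition{induced-cob-map-same}, the mod-$W$ reduction of $F^H_{\Sigma_o}$ equals the Bar-Natan cobordism map $F^{\BN}_{\Sigma_o}$; and by property~\ref{item:Ras-map}, $F^{\BN}_{\Sigma_o}(g_{\BN}(o_U))=\sum_{o'}g_{\BN}(o')$, summed over orientations $o'$ of $L$ that together with $o_U$ extend to an orientation on $\Sigma_o$. Because $\Sigma_o$ is built entirely from saddles, every connected component of $\Sigma_o$ meets both $U_o$ and $L$, so fixing $o_U$ uniquely determines the orientation on each component of $\Sigma_o$ and hence uniquely determines $o'=o$; the sum collapses to the single term $g_{\BN}(o)$. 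Thus $\tilde G(o)$ modulo $W$ represents $g_{\BN}(o)$ in $H_*(\fBNCx)=E_1^{0,*}$, and the argument of the first paragraph yields the collapse. The main subtlety is this last uniqueness-of-extension claim, which reduces to a routine component analysis of the saddle cobordism.
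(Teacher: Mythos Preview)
Your proof is correct and takes a genuinely different route from the paper's.

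The paper argues by mapping \emph{out} of $L$: it constructs an oriented cobordism $S$ from $L$ to a disjoint union $L'$ of Hopf links and unknots (via crossing-change saddles that only split components), verifies by an explicit computation that the spectral sequence for $L'$ collapses (decomposing $\HfOurCx$ for disjoint Hopf links into summands whose $\fBNCx$-generators all sit in a single homological grading, so that $d_r$ vanishes for grading reasons), and then uses that the induced map of spectral sequences is injective on $E_1$ to transport collapse back to $L$.

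You argue by mapping \emph{into} $L$: for each orientation $o$ you take the Seifert cobordism from the unlink $U_o$, where the complex $\HfOurCx(D_o)$ has zero differential, and push the explicit Bar-Natan cycle forward to a genuine cycle in $\HfOurCx(D)$ lifting $g_{\BN}(o)$; permanence of all the $\F_2[W]$-generators then forces every $d_r$ to vanish. This is shorter and avoids the Hopf-link computation entirely, at the cost of running the argument once per orientation rather than once globally. One small imprecision: the phrase ``consisting of one oriented saddle per crossing of $D$'' correctly describes the topological cobordism (one index-$1$ critical point per crossing, no births or deaths), but not its presentation as elementary diagram moves---saddles in the sense of \Definition{index-1} do not change the number of crossings, so the movie from the crossingless $D_o$ to $D$ necessarily involves Reidemeister moves as well. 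This is harmless: the Reidemeister maps are chain homotopy equivalences, \Proposition{induced-cob-map-same} still identifies the mod-$W$ reduction with the Bar-Natan cobordism map, and the ``every component meets both ends'' property you use for the uniqueness-of-extension step depends only on the absence of index-$0$ and index-$2$ critical points in the underlying surface, not on the diagrammatic presentation.
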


\begin{proof}
  We first prove this when the link $L$ is a disjoint union of some
  copies of the Hopf link and some copies of the unknot. Since adding
  a disjoint unknot component has the effect of tensoring everything
  with a two-dimensional vector space, we might assume $L$ is merely a
  disjoint union of Hopf link components, and let $M_1,\dots,M_k$ be
  the components (so that $L$ has $l=2k$ link components).  Fix a link
  diagram for $L$ with $2k$ crossings. Let $c_{i1}$ and $c_{i2}$ be
  the two crossings in the diagram for $M_i$, and fix a decoration on
  the link diagram so that the surgery arcs $\al_{c_{i1}}$ and
  $\al_{c_{i2}}$ are oriented in parallel. 

  We will first analyze the complex $\HfOurCx(M_i)$. For any subset
  $u\subseteq\{c_{i1},c_{i2}\}$, let $\ol{u}\in\{0,1\}^2$ be the
  corresponding vertex (that is, $c_{ij}\in u$ iff $\ol{u}_j=1$). Let
  $\{x^{\ol{u}}_j\}$ be the circles appearing in the complete
  resolution of $M_i$ corresponding to $u$; therefore, the chain group
  over $u$ will be generated by the square-free monomials in
  $x^{\ol{u}}_j$; for clarity, we will denote the monomial $1$ as
  $1^{\ol{u}}$. Then the complex $\HfOurCx(M_i)$ is the following:
  \[
  \xymatrix@R=1ex@W=0ex@M=0ex@C=1ex{
    & &&&&&&&&&& &1^{10}&\ar@{-->}[2,10]\ar[3,10]\ar[4,10]\\
    & &&&&&&&&&& &x^{10}_1&\ar[4,10]\\
    1^{00}&\ar[-2,10]\ar[4,10]\ar[0,22]|-W &&&&&&&&&& && &&&&&&&&&& &1^{11}\\
    x^{00}_1&\ar[-2,10]\ar[4,10] &&&&&&&&&& && &&&&&&&&&& &x^{11}_1(1^{11}+x^{11}_2)\\
    x^{00}_2&\ar[-3,10]\ar[3,10] &&&&&&&&&& && &&&&&&&&&& &x^{11}_2(1^{11}+x^{11}_1)\\
    x^{00}_1x^{00}_2&\ar@{-->}[-4,10]\ar@{-->}[2,10] &&&&&&&&&& && &&&&&&&&&& &x^{11}_1x^{11}_2\\
    & &&&&&&&&&& &1^{01}&\ar@{-->}[-4,10]\ar[-3,10]\ar[-2,10]\\
    & &&&&&&&&&& &x^{01}_1&\ar[-2,10]
    }
  \]
  Here, we have performed a change of basis on the generators coming
  from the resolution at $\{c_{i1},c_{i2}\}$. The dotted arrows come
  from the differentials $\hh_1$. The short solid arrows come from the
  differentials $\dd_1$, while the long solid arrow (from
  $1^{00}$ to $1^{11}$) comes from the
  differential $\dd_2$; it picks up a power of $W$, which we have
  indicated.

  We observe that after this change of basis, the complex
  $\HfOurCx(M_i)$ breaks up into two direct summands: the summand
  $S_i$ generated by $1^{00}$, $1^{10}$, $1^{01}$, $1^{11}$,
  $x_1^{11}(1^{11}+x_2^{11})$, and $x_2^{11}(1^{11}+x_1^{11})$; and
  the summand $T_i$ generated by $x_1^{00}$, $x_2^{00}$,
  $x_1^{00}x_2^{00}$, $x_1^{10}$, $x_1^{01}$, and
  $x_1^{11}x_2^{11}$. Furthermore, $S_i$ contains two of the four
  homology generators of the filtered Bar-Natan complex $\fBNCx$,
  namely, $x_1^{11}(1^{11}+x_2^{11})$ and $x_2^{11}(1^{11}+x_1^{11})$,
  and they live in the same homological grading; and $T_i$ contains
  the other two homology generators of $\fBNCx$, namely,
  $x_1^{00}+x_1^{00}x_2^{00}$ and $x_2^{00}+x_1^{00}x_2^{00}$, and
  they too live in the same homological grading.

  Now look at the complex for $L=\coprod_i M_i$. The chain complex
  $\HfOurCx(L)$ is not directly related to the tensor product of the
  chain complexes $\HfOurCx(M_i)$. The chain group is indeed the
  tensor product of the individual chain groups, and the differential
  coming from $\dd$ behaves like a tensor product, but the
  differential coming from $\hh$ is gotten by applying it to any
  non-empty subset of the individual chain groups (as opposed to just
  one, which would have been the case for the tensor product). Since
  the only non-zero terms in the differential for $\HfOurCx(M_i)$ come
  from $\dd_1$, $\dd_2$ and $\hh_1$, we can write the differential for
  $\HfOurCx(L)$ succinctly as follows. For any generators
  $\ga_i\in\Cx(M_i)$, the differential on
  $\ga_1\otimes\dots\otimes\ga_k$ in $\HfOurCx(L)$ is the
  following sum:
  \begin{align*}
    &\qquad\sum_i \ga_1\otimes\dots\otimes(\dd_1+W\dd_2)(\ga_i)\otimes\dots\otimes\ga_k+\sum_{\emptyset\neq A\subseteq\{1,\dots,k\}}W^{\card{A}-1}(\bigotimes_{i\in A}\hh_1(\ga_i))\otimes(\bigotimes_{i\notin A}\ga_i)\\
    &=\sum_i \ga_1\otimes\dots\otimes(\dd_1+W\dd_2)(\ga_i)\otimes\dots\otimes\ga_k\\
    &\qquad\qquad{}+\frac{(\Id+W\hh_1)(\ga_1)\otimes\dots\otimes(\Id+W\hh_1)(\ga_k)-\ga_1\otimes\dots\otimes\ga_k}{W}.
  \end{align*}

  Therefore, despite not being the tensor product, we still get $2^k$
  direct summands for $\HfOurCx(L)$ coming from the direct summands
  $S_i$ and $T_i$ for $\HfOurCx(M_i)$, for $i=1,\dots,k$. That is, the
  chain complex $\HfOurCx(L)$ can be viewed as $2^k$ different
  filtered chain complexes, not interacting with one another, each
  with an associated spectral sequence. Furthermore, each summand
  contains $2^k$ homology generators of the filtered Bar-Natan complex
  $\fBNCx$, all living in the same homological grading.

  We have so far not delved into the details of the the spectral
  sequence associated to a filtered chain complex. Peeking into
  \cite[the proof of Theorem~2.6]{McC-top-guide}, we see that the
  higher differentials correspond to zigzags of the same form as
  described in \Equation{zigzag}. Therefore, the spectral sequence
  associated to a direct sum of filtered chain complexes is the direct
  sum of the individual spectral sequences. Consequently, in the
  situation at hand, the entire spectral sequence decomposes into
  $2^k$ summands. On the other hand, it is easy to see from grading
  considerations that the higher differentials are zero for each of
  the summands. To wit, the homology of the first page of each summand
  has $2^k$ generators (over $\F_2[W]$) living in the same homological
  grading. Since the higher differentials increase homological grading
  $\gr_h$ by one, and $\gr_h(W)=-1$, we see that there are no higher
  differentials.

  \setlength\tempfigdim{0.15\textwidth}
  \begin{figure}
    \[
    \xymatrix@C=0.1\textwidth{
      \vcenter{\hbox{\includegraphics[width=\tempfigdim]{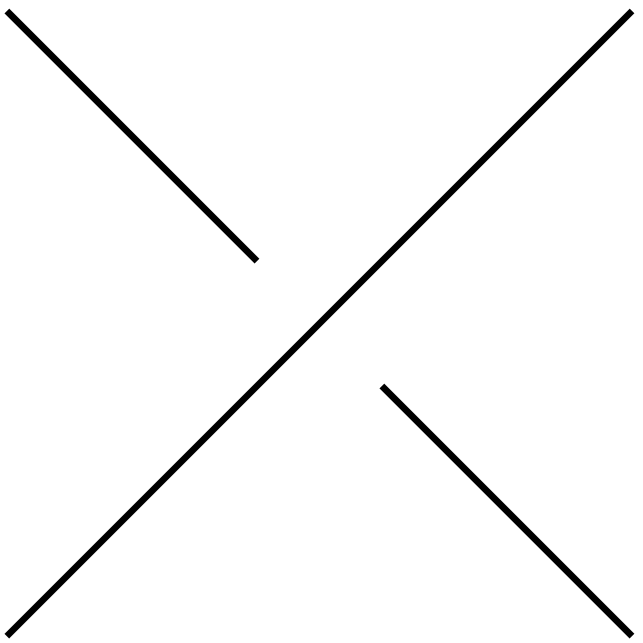}}}\ar[r]&
      \vcenter{\hbox{\includegraphics[width=\tempfigdim]{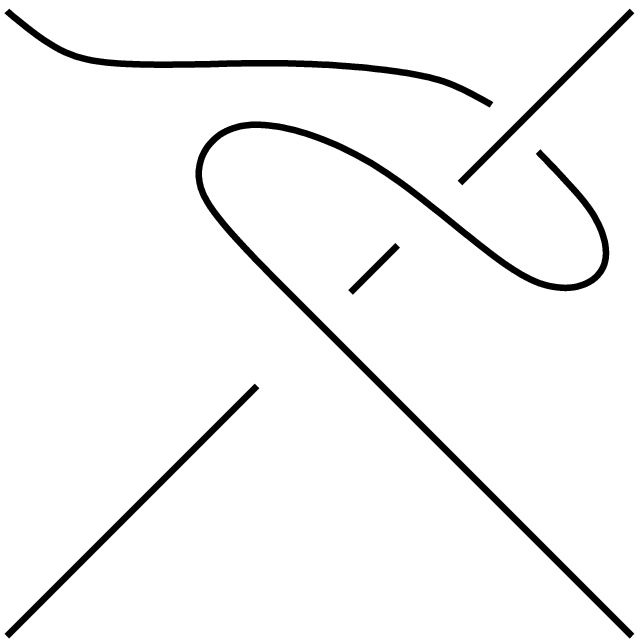}}}\ar[r]&
      \vcenter{\hbox{\includegraphics[width=\tempfigdim]{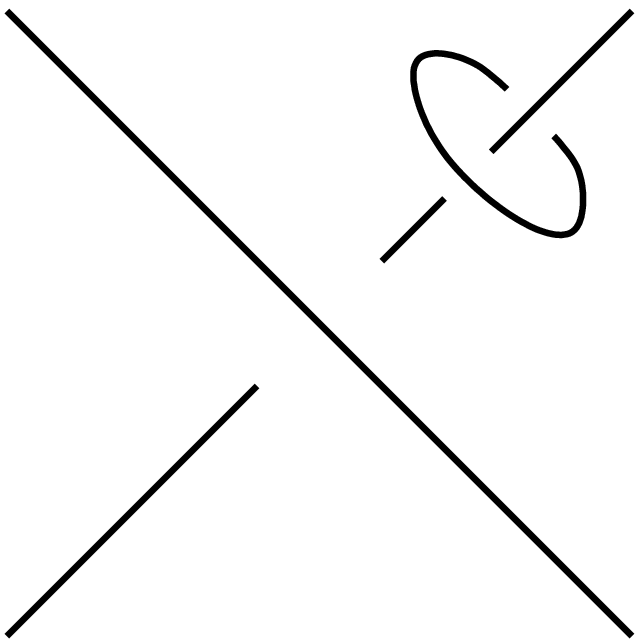}}} 
    }
    \]
    \caption{Performing a crossing change by a saddle (which splits a
      component into two) at the cost of introducing a meridional
      circle on one of the strands.}\label{fig:saddle-meridian}
  \end{figure}

  This completes the proof that the spectral sequence
  $H_*(\fBNCx(L))\otimes\F_2[W]\rightrightarrows H_*(\HfOurCx(L))$ has
  no higher differentials when $L$ is a disjoint union of some copies
  of the Hopf link and some copies of the unknot. To see how this
  implies the general statement, we will construct an oriented link
  cobordism $S$ in $\R^3\times[0,1]$ from our starting link $L$ to
  some other link $L'$ which is a disjoint union of some copies of the
  Hopf link and some copies of the unknot, satisfying the following
  properties:
  \begin{enumerate}
  \item The map $\pi_0(L)\to\pi_0(S)$ (induced from the inclusion
    $L\cong S\cap(\R^3\times\{0\})\into S$) is a bijection.
  \item The map $\pi_0(L')\to\pi_0(S)$ (induced from the inclusion
    $L'\cong S\cap(\R^3\times\{1\})\into S$) is a surjection.
  \end{enumerate}
  One way to construct such a cobordism is illustrated in
  \Figure{saddle-meridian}. We may perform a crossing change by a
  single saddle which adds a meridional circle to one of the strands,
  and the saddle splits a link component into two. After suitable such
  crossing changes, we can produce an unlink, with each component
  having some number meridional circles attached to them. After
  performing a few more splits using saddles, we can convert this
  picture into a disjoint union of Hopf links and an unlink. During
  this cobordism, the only elementary moves that we used were link
  isotopy and saddles that were splits; therefore, this link cobordism
  satisfies the above two properties.

  Using the maps from
  Propositions~\ref{prop:R-I-inv}--\ref{prop:R-III-inv} and
  Definitions~\ref{def:index-0-2}--\ref{def:index-1}, we get a map \(
  \HfOurCx(L)\to\HfOurCx(L') \) in $\hocat(\F_2[W])$. Being a map over
  $\F_2[W]$, the filtrations given by the powers of $W$ are
  preserved. Therefore, we get a map between the two spectral
  sequences, see for example, \cite[Theorem~3.5]{McC-top-guide}.

  \Proposition{induced-cob-map-same} implies the map on the first page
  is the standard map on the filtered Bar-Natan theory (tensored with
  $\F_2[W]$). Since our link cobordism $S$ satisfies the above
  conditions, for any orientation $o$ on $L$, there exists a unique
  orientation $o'$ on $L'$ so that $o$ and $o'$ can be extended to an
  orientation on $S$. Therefore, \ref{item:Ras-map} implies that the
  map on $H_*(\fBNCx)$ is injective.

  Summarizing, we get a map from the spectral sequence for $L$ to the
  spectral sequences for $L'$. It is injective on the homology of the
  first page; and the spectral sequence for $L'$ has no higher
  differentials. Therefore, the spectral sequence for $L$ has no
  higher differentials either. (The last step is merely the
  observation that if $f$ is an injective chain map from a chain
  complex $(C_1,d_1)$ to a chain complex $(C_2,d_2=0)$, then $d_1=0$.)
\end{proof}

\begin{proof}[Proof of \Proposition{total-rank}]
  First note that the statements \ref{item:rank-Hf} and
  \ref{item:rank-Hl} are equivalent, since we have an
  $\F_2[H,H^{-1},W]$-module $\gr_h$-graded isomorphism
  \[
  \HfOurCx\otimes\F_2[\Z]=(\OurCx/\{H=1\})\otimes\F_2[\Z]
  \to\HlOurCx
  \]
  induced by the map
  \[
  [W^a (u,x),b] \mapsto H^{b-a+(\gr_q((u,x))-l)/2}W^a(u,x).
  \]
  (Here the $\F_2[H,H^{-1},W]$-module structure on
  $\HfOurCx\otimes\F_2[\Z]$ is given by
  \(
  H^a W^b [W^c (u,x),d]=[W^{b+c}(u,x),a+b+d].)
  \)
  The map, by definition, is $\F_2[H,H^{-1},W]$-equivariant; since
  $\gr_h(H)=0$, the map preserves the $\gr_h$-grading; and it has an
  obvious inverse map induced by
  \[
  H^a W^b (u,x)\mapsto [W^b(u,x),a+b+(l-\gr_q((u,x)))/2].
  \]
  To see that these maps are chain maps, observe that if $H^c
  W^d(v,y)$ appears in $\diffOur((u,x))$, then
  $\gr_q((v,y))=\gr_q((u,x))+2c+2d$, and therefore, we have a
  commuting diagram
\[
\xymatrix{
H^a W^b (u,x)\ar@{<->}[r]\ar[d]_-{\diffOur}&
[W^b(u,x),a+b+(l-\gr_q((u,x)))/2] \ar[d]^-{\diffOur/\{H=1\}\otimes\Id}\\
H^{a+c}W^{b+d}(v,y)\ar@{<->}[r]&[W^{b+d}(v,y),a+b+(l-\gr_q((u,x)))/2],
}
\]
where the horizontal arrows are the maps defined above, and the
vertical arrows are parts of the differentials $\diffOur$ and
$\diffOur/\{H=1\}\otimes\Id$.

The statement \ref{item:rank-f} follows from \ref{item:rank-Hf} by the
following well-known trick in homological algebra (and can also be
seen as an application of the universal coefficient theorem). The
complex $\fOurCx$ can be viewed as the mapping cone
  \[
  \fOurCx=\HfOurCx/\{W=1\}\simeq\Cone(W-1\from\HfOurCx\to\HfOurCx).
  \]
  The homology of the mapping cone is the homology of the mapping cone
  of the homology. That is, we have an exact triangle
  \[
  \xymatrix{
    H_*(\HfOurCx)\ar[rr]^-{(W-1)_*}&&H_*(\HfOurCx)\ar[dl]\\
    &H_*(\fOurCx),\ar[ul]
    }
  \]
  which implies
  \begin{align*}
  H_*(\fOurCx)&\cong H_*(\Cone((W-1)_*\from H_*(\HfOurCx)\to
  H_*(\HfOurCx)))\\
  &=H_*(\Cone(W-1\from \bigoplus^{2^l}\F_2[W]\to \bigoplus^{2^l}\F_2[W]))\\
  &=\bigoplus^{2^l}\F_2.
  \end{align*}

  Therefore, we only need to prove the statement for
  \ref{item:rank-Hf}. This follows immediately from
  \Lemma{spectral-sequence-collapse}. The $E^2=E^{\infty}$-page of the
  spectral sequence $H_*(\fBNCx)\otimes\F_2[W]\rightrightarrows
  H_*(\HfOurCx)$ is isomorphic to $2^l$ copies of $\F_2[W]$, with the
  copies in a canonical correspondence with the orientations of
  $L$. The $E^{\infty}$-page being a free module over $\F_2[W]$, we do
  not encounter any extension problems, and can conclude that it is
  isomorphic to the homology of $\HfOurCx$, which therefore is $2^l$
  copies of $\F_2[W]$ as well.
\end{proof}

\section{Concordance invariants}\label{sec:new-s}

In this section, we will construct concordance invariants in the same
spirit as \cite{Ras-kh-slice}, as described in \ref{item:Ras-s}
in \Section{properties}. We concentrate only on knots $K$, although
most of the constructions generalize for links. We also only work with
the filtered version $\fOurCx(K)$; using the more general version
would allow us to construct to similar other invariants, although
computing them might be more challenging.

\begin{definition}
  An \emph{upright set} is a subset $\upright$ of $\Z\times(2\Z+1)$
  satisfying the following condition: If $(a,b)$ is in $\upright$ and
  $a'\geq a$ and $b'\geq b$, then $(a',b')$ is also in $\upright$.
  For any even integer $n$, the \emph{translate} $\upright[n]$ is
  another upright set defined as
  \[
  (a,b)\in\upright[n]\text{ if and only if }(a,b-n)\in\upright.
  \]
  A \emph{centered upright set} is an upright set that contains $(0,1)$, but
  not $(0,-1)$.
\end{definition}

\begin{example}\label{exam:minmax-uprights}
  The intersection of the all the centered upright sets is the following
  centered upright set
  \[
  \upright_{\min}\defeq\set{(a,b)}{a\geq 0\text{ and }b> 0},
  \]
  and the union of the all centered upright sets is the following centered
  upright set
  \[
  \upright_{\max}\defeq\set{(a,b)}{a>0\text{ or }b> 0}.
  \]
\end{example}

\begin{definition}
  A sequence of upright sets $\upright_1,\upright_2,\ldots$ is said to
  have a limit if for all points $(a,b)$, there exists $N$ (depending
  on $a,b$) such that either
  \begin{enumerate}
  \item $(a,b)\in\upright_i$ for all $i>N$; or
  \item $(a,b)\notin\upright_i$ for all $i>N$.
  \end{enumerate} 
  In that case, the
  \emph{limit upright set} is defined as
  \[
  \lim_{i\to\infty}\upright_i=\set{(a,b)}{\text{there exists $N$
      (depending on $a,b$) such that for all $i>N$, $(a,b)\in\upright_i$}}.
  \]
  The following properties are immediate from the definition.
  \begin{enumerate}
  \item If a sequence $\upright_1,\upright_2,\ldots$ has a limit, then
    any subsequence also has the same limit.
  \item The limit of centered upright sets, if exists, is centered.
  \item For nested sequences
    $\upright_1\subseteq\upright_2\subseteq,\cdots$, the limit is the
    union. For nested sequences
    $\upright_1\supseteq\upright_2\supseteq,\cdots$, the limit is the
    intersection.
  \end{enumerate}
\end{definition}

\begin{example}\label{exam:projective-uprights}
  For $t\in[0,1]$, define $\upright_{(t)}$ to be the following
  centered upright set,
  \[
  \upright_{(t)}=
    \set{(a,b)}{at+b(1-t)>0\text{ or }[at+b(1-t)=0\text{ and }b>0]}.
  \]
  For any increasing sequence $t_1\leq t_2\leq\cdots$ of points in $[0,1]$ converging to $t$,
  \[
  \upright_{(t)}=\lim_{i\to\infty}\upright_{(t_i)}.
  \]
\end{example}

\begin{example}\label{exam:complicated-upright}
  Extending \Example{projective-uprights}, for $t\in[0,1]$,
  $s\in[-1,1]$, consider any function
  \[
  r\from\set{(a,b)}{at+b(1-t)=s(1-t)}\to\{\pm 1\}
  \] 
  satisfying:
  \begin{enumerate}
  \item if $a=0$, $r(0,b)=\text{sgn}(b)$ (only relevant when $t=1$ or $s=\pm 1$);
  \item if $b$ is fixed, $r(a,b)$ is a non-decreasing function of $a$
    (only relevant when $t=0$).
  \end{enumerate}
  Define $\upright_{(t,s,r)}$ to be the following centered upright set,
  \[
  \upright_{(t,s,r)}= \set{(a,b)}{at+b(1-t)>s(1-t)\text{ or
    }[at+b(1-t)=s(1-t)\text{ and }r(a,b)> 0]},
  \]
  see also \Figure{complicated-upright}
\end{example}

\tikzstyle{solid}=[circle,thick,fill=black,minimum width=6pt,inner sep=0pt, outer sep=0pt]
\tikzstyle{hollow}=[circle,thick,draw=black,fill=white,minimum width=6pt,inner sep=0pt, outer sep=0pt]
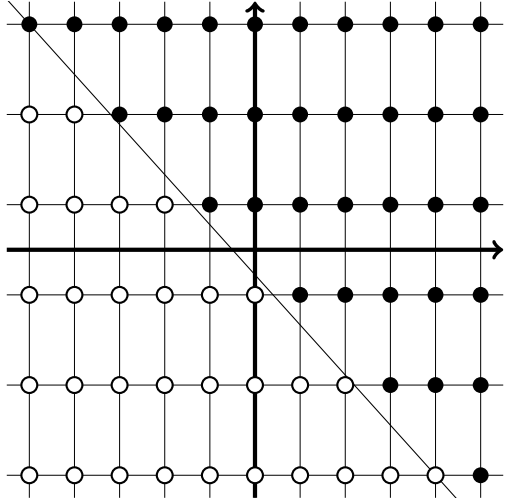
\begin{figure}
  \centering
  \begin{tikzpicture}[scale=0.6]
    \draw[ultra thick,->] (-5.5,0) -- (5.5,0);
    \draw[ultra thick,->] (0,-5.5) -- (0,5.5);

    \clip (-5.5,-5.5) rectangle (5.5,5.5);
    
    \foreach \i in {-5,...,5}
    {
      \draw[ultra thin] (\i,-5.5) -- (\i,5.5);
    }    
    \foreach \i in {-5,-3,...,5}
    {
      \draw[ultra thin] (-5.5,\i) -- (5.5,\i);
    }

    \draw (-9.5,10) -- (8.5,-10);

    \foreach \i/\j in {-5/5,-4/5,-3/3,-3/5,-2/3,-2/5,-1/1,-1/3,-1/5,0/1,0/3,0/5,1/-1,1/1,1/3,1/5,2/-1,2/1,2/3,2/5,3/-3,3/-1,3/1,3/3,3/5,4/-3,4/-1,4/1,4/3,4/5,5/-5,5/-3,5/-1,5/1,5/3,5/5}
    {
      \node[solid] at (\i,\j) {};
    }
    \foreach \i/\j in {-5/-5,-5/-3,-5/-1,-5/1,-5/3,-4/-5,-4/-3,-4/-1,-4/1,-4/3,-3/-5,-3/-3,-3/-1,-3/1,-2/-5,-2/-3,-2/-1,-2/1,-1/-5,-1/-3,-1/-1,0/-5,0/-3,0/-1,1/-5,1/-3,2/-5,2/-3,3/-5,4/-5}
    {
      \node[hollow] at (\i,\j) {};
    }

    %\node at (0,-5/9) {$\bullet$};
  \end{tikzpicture}
  \caption{The upright set $\upright_{(t,s,r)}$ from
    \Example{complicated-upright}. Let $\mathscr{L}$ be the line
    passing through the point $(0,s)$ and making an angle
    $\theta=2\tan^{-1}(t)$ with the negative $x$-axis. The subset
    $\upright_{(t,s,r)}\subset\Z\times(2\Z+1)$ consists of all the
    points above and to the right of $\mathscr{L}$ (marked solid),
    none of the points below and to the left of $\mathscr{L}$ (marked
    hollow), and some of the points on $\mathscr{L}$ (marked solid or
    hollow) determined by the function $r$. (If $t=0,1$ or $s=\pm 1$,
    there are some restrictions on the function $r$ to ensure that
    $\upright_{(t,s,r)}$ is indeed a centered upright set.) We have
    depicted the case when $t=10/19, s=-5/9$ and $r$ satisfies
    $r(-5,5)=1,r(4,-5)=-1$.
  }\label{fig:complicated-upright}
\end{figure}

\begin{definition}
  For any knot $K$, and any upright set $\upright$, let
  $\Filt_{\upright}\fOurCx(K)$ denote the subcomplex of
  $\fOurCx(K)=(\Cx(K),\dd+\hh)$ generated by Khovanov generators whose
  $(\gr_h,\gr_q)$-bigradings are in $\upright$.
\end{definition}

\begin{definition}\label{def:new-s-invariants}
  Fix a knot $K$ and an orientation $o$ on $K$. For any centered
  upright set $\upright$, define the following three numbers:
  \begin{align*}
  s^{\upright}_{o}(K)&=\max\set{n\in 2\Z}{\text{$\Filt_{\upright[n]}\fOurCx(K)$
      contains a representative for $g(o)$}}+2\\
  s^{\upright}_{-o}(K)&=\max\set{n\in 2\Z}{\text{$\Filt_{\upright[n]}\fOurCx(K)$
      contains a representative for $g(-o)$}}+2\\
  s^{\upright}_{o,-o}(K)&=\max\set{n\in 2\Z}{\text{$\Filt_{\upright[n]}\fOurCx(K)$
      contains a representative for $g(o)+g(-o)$}},
  \end{align*}
  where $g(\pm o)$ are the two generators for $H_*(\fOurCx(K))$
  corresponding to the orientations $\pm o$,
  from \Proposition{total-rank}~\ref{item:rank-f}.
\end{definition}

\begin{example}
  Let us compute these new $s$-invariants for the three-crossing
  diagram of the positive trefoil from
  \Example{trefoil-complex}. Continuing the same notation from that
  example, and for some orientation $o$ of the trefoil, the three
  non-zero elements of the two-dimensional homology of
  $\fOurCx=\OurCx/\{H=W=1\}$ has the following cycle representatives:
  \begin{align*}
    g(o)&=[x^{000}_1+x^{000}_1x^{000}_2]\\
    g(-o)&=[x^{000}_2+x^{000}_1x^{000}_2]\\
    g(o)+g(-o)&=[x^{000}_1+x^{000}_2].
  \end{align*}
  Since none of $x^{000}_1$, $x^{000}_2$, and $x^{000}_1x^{000}_2$ are
  hit by the differential, any cycle representative for these elements
  must contain these Khovanov generators. The
  $(\gr_h,\gr_q)$-bigradings of $x^{000}_1$, $x^{000}_2$ and
  $x^{000}_1x^{000}_2$ are $(0,3)$, $(0,3)$, and $(0,1)$,
  respectively. Therefore, for any centered upright set $\upright$,
  \begin{align*}
    &\phantom{\Longleftrightarrow}\Filt_{\upright[n]}\fOurCx\text{ contains a cycle representative of $g(o)$ or $g(-o)$}\\
    &\Longleftrightarrow (0,1)\in \upright[n]\\
    &\Longleftrightarrow n\leq 0,\\
\shortintertext{and}
    &\phantom{\Longleftrightarrow}\Filt_{\upright[n]}\fOurCx\text{ contains a cycle representative of $g(o)+g(-o)$}\\
    &\Longleftrightarrow (0,3)\in \upright[n]\\
    &\Longleftrightarrow n\leq 2.
  \end{align*}
  (Here, the last step is justified since the centered upright
  $\upright\subset\Z\times(2\Z+1)$ contains $(0,1)$ but not $(0,-1)$.)
  Therefore,
  $s^{\upright}_{o}=s^{\upright}_{-o}=s^{\upright}_{o,-o}=2$.
\end{example}

It is perhaps not immediate why these numbers are knot invariants. We
will prove this in
\Proposition{new-s-properties}~\ref{item:s-invariant}. Along the way,
we need the following lemma.
\begin{lemma}\label{lem:generators-map}
  Consider any connected oriented cobordism from a knot $K_1$ to a
  knot $K_2$ in $\R^3\times[0,1]$. After viewing the cobordism as a
  sequence of Reidemeister moves, births, deaths, and saddles,
  consider the map $f\from H_*(\fOurCx(K_1))\to H_*(\fOurCx(K_2))$
  induced from the maps defined in
  Propositions~\ref{prop:R-I-inv}--\ref{prop:R-III-inv} and
  Definitions~\ref{def:index-0-2}--\ref{def:index-1}. If $\pm o_1$ are
  the orientations on $K_1$, and $\pm o_2$ are the two corresponding
  orientations on $K_2$ (induced from the connected oriented
  cobordism), the map acts as follows on the generators of
  $H_*(\fOurCx)$:
  \[
  g(o_1)\mapsto g(o_2)\qquad g(-o_1)\mapsto g(-o_2).
  \]
\end{lemma}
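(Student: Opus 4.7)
The plan is to lift the problem to the intermediate theory $\HfOurCx = \OurCx/\{H=1\}$, where we retain the homological grading $\gr_h$ and an $\F_2[W]$-module structure, and then reduce modulo $W$ to land in the Bar-Natan theory, where the analog is known. The strategy exploits the reduction diagram in \Proposition{total-rank}, whose vertical arrows keep the correspondence with orientations and preserve $\gr_h$.

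First I would observe that each elementary map from Propositions~\ref{prop:R-I-inv}--\ref{prop:R-III-inv} and Definitions~\ref{def:index-0-2}--\ref{def:index-1} is a $(0,0)$-bigraded morphism in $\hocat(\F_2[H,W])$ (after accounting for the indicated $\{\cdot\}$-shifts), so composing them yields a map $F_{Hf}$ on $H_*(\HfOurCx)$ which is $\F_2[W]$-linear and preserves $\gr_h$. Reducing modulo $W$ gives the classical Bar-Natan cobordism map $F_{\BN}$ on $H_*(\fBNCx)$ by \Proposition{induced-cob-map-same}, while setting $W=1$ recovers the map $f$ on $H_*(\fOurCx)$ that we want to compute.

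Next I would invoke \Proposition{total-rank}~\ref{item:rank-Hf} to write each $H_*(\HfOurCx(K_i))$ as a free rank-two $\F_2[W]$-module with basis $\{e^i_+, e^i_-\}$ corresponding to the orientations $\pm o_i$. For a knot, the linking-number formula in \ref{item:BN-total-rank} places both $e^i_\pm$ in $\gr_h = 0$; since $\gr_h(W) = -1$, the $\gr_h = 0$ part of $H_*(\HfOurCx(K_2))$ is exactly the $\F_2$-span $\langle e^2_+, e^2_-\rangle$. Because $F_{Hf}$ preserves $\gr_h$, this forces an expansion $F_{Hf}(e^1_+) = \alpha\, e^2_+ + \beta\, e^2_-$ with scalars $\alpha, \beta \in \F_2$, killing all possible $W$-contributions. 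This grading squeeze is the core observation that makes the whole argument work.

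To pin down $(\alpha, \beta) = (1, 0)$ I reduce modulo $W$. The hypothesis that the oriented cobordism $\Sigma$ is connected guarantees that $o_1$ extends uniquely to an orientation of $\Sigma$ and then restricts uniquely to $o_2$ on $K_2$; this is the one place where connectedness is used. So by \ref{item:Ras-map}, $F_{\BN}(g_{\BN}(o_1)) = g_{\BN}(o_2)$, and the correspondence preserved by $\HfOurCx \to \fBNCx$ (from \Proposition{total-rank}) forces $\alpha = 1$, $\beta = 0$. Setting $W = 1$ in the resulting identity $F_{Hf}(e^1_+) = e^2_+$ yields $f(g(o_1)) = g(o_2)$, with an identical argument handling $-o_1$. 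The only non-routine step is the grading bookkeeping verifying that each elementary map really does preserve $\gr_h$; everything else is formal bookkeeping between the three reductions.
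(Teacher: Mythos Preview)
Your proposal is correct and follows essentially the same route as the paper: lift to $\HfOurCx$, use that the cobordism map preserves $\gr_h$ together with $\gr_h(W)=-1$ and \Proposition{total-rank} to force the $W$-coefficients to be scalars, then reduce modulo $W$ and invoke \Proposition{induced-cob-map-same} and \ref{item:Ras-map} to identify those scalars, and finally set $W=1$. The paper's write-up is slightly terser but the argument is the same, including the observation that connectedness is what makes the extension of $o_1$ across the cobordism unique.
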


\begin{proof}
  Consider the induced map $\wt{f}$ on the partially filtered complex
  $\HfOurCx=\OurCx/\{H=1\}$.  Let $\wt{g}(\pm o_1)$ and $\wt{g}(\pm
  o_2)$ be the generators of $H_*(\HfOurCx(K_1))$ and
  $H_*(\HfOurCx(K_2))$ over $\F_2[W]$ corresponding to the
  orientations $\pm o_1$ and $\pm o_2$, respectively, 
  cf.~\Proposition{total-rank}~\ref{item:rank-Hf}. Assume
  \begin{align*}
    \wt{f}(\wt{g}(o_1))&=\al W^a \wt{g}(o_2)+\be W^b \wt{g}(-o_2)\\
    \wt{f}(\wt{g}(-o_1))&=\ga W^c \wt{g}(o_2)+\delta W^d \wt{g}(-o_2)
  \end{align*}
  for some $\al,\be,\ga,\delta\in\F_2$, and some integers $a,b,c,d\geq
  0$.  Here, and elsewhere, we may be viewing the equations at the
  level of homology, or at the chain level, where $\wt{g}(\pm o_i)$
  should read `a cycle representative for $\wt{g}(\pm o_i)$', and the
  equality sign should read `equal relative boundary'. Since $\wt{f}$
  preserves the $\gr_h$-grading, and all of the four elements
  $\wt{g}(\pm o_i)$ live in homological grading zero
  (\Proposition{total-rank}), we must have $a=b=c=d=0$.

  Therefore, the induced map $f_{\BN}$ on the filtered Bar-Natan
  complex $\fBNCx=\OurCx/\{H=1,W=0\}$ is
  \begin{align*}
    f_{\BN}(g_{\BN}(o_1))&=\al g_{\BN}(o_2)+\be g_{\BN}(-o_2)\\
    f_{\BN}(g_{\BN}(-o_1))&=\ga g_{\BN}(o_2)+\delta g_{\BN}(-o_2)
  \end{align*}
  where $g_{\BN}(\pm o_i)$ denotes the standard generators of
  $H_*(\fBNCx(K_i))$ from \ref{item:BN-total-rank}. However, since
  this induced map $f_{\BN}$ is the standard Bar-Natan cobordism map
  (from \Proposition{induced-cob-map-same}), \ref{item:Ras-map}
  implies that $\al=\delta=1$ and $\be=\ga=0$. Therefore, the map $f$
  on $\fOurCx=\OurCx/\{H=W=1\}$ is
  \[
  g(o_1)\mapsto g(o_2)\qquad g(-o_1)\mapsto g(-o_2).
  \]
  as desired.
\end{proof}

\begin{proposition}\label{prop:new-s-properties}
These new $s$-invariants for any knot $K$ and any centered upright set
$\upright$, satisfy the following.
\begin{enumerate}[label=(S-\arabic*), ref=(S-\arabic*)]
\item\label{item:s-invariant} Each of the three numbers $s^{\upright}_*(K)$ is a knot
  invariant, and each is zero for the unknot.
\item\label{item:s-subset} If $\upright'$ is another centered upright
  set with
  $\upright\subset\upright'$, then for each of the three variants
  \[
  s^{\upright}_*(K)\leq s^{\upright'}_*(K).
  \]
\item\label{item:s-limit} For any sequence of centered upright sets
  $\upright_1,\upright_2,\ldots$ that have a limit, each of the three
  variants satisfy
  \[
  \lim_{i\to\infty}s^{\upright_i}_*(K)=s^{\lim_{i}\upright_i}_*(K).
  \]
\item\label{item:s-equal} $s^{\upright}_{o,-o}+2\geq s^{\upright}_{o}(K)=s^{\upright}_{-o}(K)$.
\item\label{item:s-agrees} For each variant,
  $s^{\upright_{(1)}}_*(K)$ agrees with the Rasmussen $s$-invariant
  (where $\upright_{(1)}$ is described in \Example{projective-uprights}).
\item\label{item:s-finite} Each of the three numbers
  $s^{\upright}_*(K)$ is finite.
\end{enumerate}
\end{proposition}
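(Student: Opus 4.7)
\emph{Foundational parts.} For \ref{item:s-invariant}, specializing $H=W=1$ is functorial, so \Proposition{main-invariance} yields invariance of the chain homotopy type of $\fOurCx$ over $\F_2$; the Reidemeister equivalences come from bigrading-preserving maps on $\OurCx$ and hence preserve every $\Filt_{\upright[n]}\fOurCx$, while \Lemma{generators-map} identifies $g(\pm o)$ across diagrams, so the top filtration level containing a representative is well-defined. The unknot computation is direct. Part \ref{item:s-subset} is immediate since $\upright\subseteq\upright'$ gives $\Filt_{\upright[n]}\fOurCx\subseteq\Filt_{\upright'[n]}\fOurCx$. For \ref{item:s-limit}, finite-dimensionality of $\fOurCx$ makes only finitely many bigradings relevant, so pointwise convergence forces $\Filt_{\upright_i[n]}\fOurCx=\Filt_{\upright[n]}\fOurCx$ for $i$ large at each relevant $n$, and the $s$-invariants must stabilize. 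For \ref{item:s-finite}, the upper bound is immediate from finite-dimensionality; for the lower bound we produce a cycle representative of $g(o)$ supported in $\gr_h\geq 0$ by lifting a cycle representative of $g_{\BN}(o)$ in $\fBNCx$ level by level through the $\gr_h$-filtration of $\fOurCx$ (whose $E_1$-page is $\fBNCx$, since $\dd+\hh$ strictly raises $\gr_h$), with the obstructions vanishing because the induced spectral sequence collapses---$H_*(\fBNCx)$ and $H_*(\fOurCx)$ both have rank $2^l$ by \Proposition{total-rank}---and such a finitely supported cycle lies in $\Filt_{\upright[n]}\fOurCx$ for $n$ sufficiently negative.

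\emph{Part \ref{item:s-equal}.} The inequality $s^{\upright}_{o,-o}+2\geq s^{\upright}_{o}$ is immediate once the equality is proved: if $\al,\be\in\Filt_{\upright[m]}\fOurCx$ represent $g(o),g(-o)$, then $\al+\be$ represents $g(o)+g(-o)$ in the same subcomplex. For $s^{\upright}_{o}=s^{\upright}_{-o}$, the plan is to construct a chain involution $\tau\from\fOurCx\to\fOurCx$ that preserves the bigrading and swaps $g(o)\leftrightarrow g(-o)$. As an $\F_2$-linear map, $\tau$ acts on each $\Cx[u]$ multiplicatively by substituting $x_i\mapsto 1+x_i$ on every circle $x_i$ of $\AssRes{u}$. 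The main technical step---the expected hard part---is verifying that $\tau$ commutes with each $\dd_i$ and $\hh_i$; this reduces to checking, for each configuration in \Figure{zoltan-configurations} and \Figure{mixed-forest}, that the contribution is unchanged under the substitution, a parity count over the circles carrying the label $1$. That $\tau$ swaps $g(\pm o)$ reduces, via the $\gr_h$-filtration, to the classical statement for $\fBNCx$.

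\emph{Part \ref{item:s-agrees}.} The subcomplex $\Filt_{\upright_{(1)}[n]}\fOurCx$ consists of Khovanov generators with $\gr_h>0$ (any $\gr_q$) together with those at $\gr_h=0$ satisfying $\gr_q>n$. A cycle representative of $g(o)$ in this subcomplex has its $\gr_h=0$ slice a cycle in $\fBNCx$ representing $g_{\BN}(o)$ with $\gr_q\geq n+1$; conversely, any such $\fBNCx$-cycle lifts to a cycle in $\fOurCx$ supported in $\gr_h\geq 0$ by the same argument used in \ref{item:s-finite}. Hence $s^{\upright_{(1)}}_o(K)+2$ equals two more than the largest even $n$ admitting such a representative in $\fBNCx$. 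Since quantum gradings for knot generators are odd and Rasmussen's $s(K)$ is even, this max over even $n$ equals $s(K)-2$, giving $s^{\upright_{(1)}}_o(K)=s(K)$; the identical calculation handles the other two variants.
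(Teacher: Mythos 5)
Parts (S-1)--(S-3) match the paper. The critical flaw is in (S-4): the involution $\tau\colon x_i\mapsto 1+x_i$ is \emph{not} a chain map on $\fOurCx$. While $\tau$ does commute with $\dd_1+\hh_1$ (it is the characteristic-$2$ Lee/Bar-Natan involution on $\fBNCx$), it fails to commute with the Szab\'o higher differentials. Take a Type-A configuration $R$ with two starting circles $a,b$, two parallel arcs, and two ending circles; then $\cont{d}((R,1,1))=1$ while $\cont{d}((R,a,\,\cdot\,))=0$ for every ending monomial (Types A/B need all-$0$ or all-$1$ labels, Types C/D/E are ruled out by circle and incidence counts), and $\cont{h}$ vanishes on all of $(R,\cdot,\cdot)$ since a two-in/two-out index-$2$ configuration can never be a disjoint union of trees and dual trees. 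Hence $\dd_2(\tau(a))=\dd_2(1)+\dd_2(a)=1\neq 0=\tau(\dd_2(a))$, and no $\hh$-term cancels this. The failure is even visible in \Example{trefoil-complex}: at $H=W=1$ one computes $(\diffOur\tau+\tau\diffOur)(x_1^{000}x_2^{000})=1^{110}+1^{101}+1^{011}+1^{111}\neq 0$, the discrepancy coming exactly from the $\dd_2,\dd_3$ terms of $\diffOur(1^{000})$. The paper instead constructs an automorphism of $\OurCx(K)$ only in $\hocat(\F_2[H,W])$, by isotoping a strand over $\infty\in S^2$ (a genuine isomorphism by naturality of $\cont{d},\cont{h}$) and then Reidemeistering back to $D_1$; the induced map on $\fBNCx$ reverses the checkerboard coloring and hence swaps $g_{\BN}(\pm o)$, and the mechanism of \Lemma{generators-map} transports the swap to $\fOurCx$.

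Secondarily, in (S-6) the claim that the upper bound is ``immediate from finite-dimensionality'' is false: for $\upright=\upright_{\max}$, the subcomplex $\Filt_{\upright_{\max}[n]}\fOurCx$ contains every generator with $\gr_h>0$ for all $n$, so it never becomes zero. What must be excluded is a cycle representative for $g(o)$ supported entirely in $\gr_h>0$, which is a homological statement, not a dimension count. The paper obtains (S-6) by first proving (S-5) in full---including the $\leq$ direction by an explicit contradiction, showing that the $\gr_h=0$ part of a putative representative in $\Filt_{\upright_{(1)}[s]}$ would give a $\fBNCx$-representative of $g_{\BN}(o)$ in too high a quantum grading---and then squeezing via (S-2) and the containments $\upright_{(1)}[s-2]\cap S\subset\upright_{\min}[n]$ and $\upright_{\max}[m]\cap S\subset\upright_{(1)}[s]$. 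Your (S-5) sketch needs exactly this contradiction argument to justify that ``its $\gr_h=0$ slice represents $g_{\BN}(o)$''; as written that step is asserted rather than proved.
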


\begin{proof}
  For the proof of \ref{item:s-invariant}, note that the bigraded
  chain homotopy type of the full theory $\OurCx(K)$ is a knot
  invariant, see \Proposition{main-invariance}; therefore, the
  $(\gr_h,\gr_q)$-bifiltered chain homotopy type of $\fOurCx(K)$ is a
  knot invariant as well. Furthermore, the maps inducing the homotopy
  equivalence preserve the generators corresponding to $\pm o$, see
  \Lemma{generators-map}; therefore, each of the three numbers is a
  knot invariant. The computation for the unknot is immediate from the
  $0$-crossing diagram of the unknot.

  For \ref{item:s-subset}, observe that for all $n$,
  $\Filt_{\upright[n]}\fOurCx(K)$ is a subcomplex of
  $\Filt_{\upright'[n]}\fOurCx(K)$. Consequently, if
  $\Filt_{\upright[n]}\fOurCx(K)$ contains a cycle representative for
  some element in $H_*(\fOurCx(K))$, so does
  $\Filt_{\upright'[n]}\fOurCx(K)$.

  For \ref{item:s-limit}, fix some knot diagram for $K$. The Khovanov
  chain group $\Cx$ for this knot diagram is supported on some finite
  subset of $\Z\times(2\Z+1)$. Choose $N$ large enough so that for all
  $i>N$, $\upright_i$ agrees with $\lim_i\upright_i$ on this finite
  subset. The claim then follows immediately.

  \begin{figure}
    \[
    \xymatrix{
      &&&&&&\\
      *+[F]{K} \ar@{-}`d[dr]`[r]`[u]`[][]&\ar@/^1pc/[rrrr]^{\text{isotopy in
          $S^2$}}&&&&\ar@/^1pc/[llll]^{\text{Reidemeister moves in $\R^2$}}&*+[F]{K} \ar@{-}`d[dl]`[l]`[u]`[][] \\
      &&&&&&
    }
    \]
    \caption{An automorphism of $\OurCx$ that reverses the orientation.}\label{fig:reverse-auto}
  \end{figure}
  
  For the equality in \ref{item:s-equal}, we will produce an
  automorphism of $\OurCx(K)$ (in $\hocat(\F_2[H,W])$) whose induced
  automorphism on $H_*(\fOurCx(K))$ interchanges $g(o)$ and $g(-o)$.
  Fix some knot diagram $D_1$ for $K$, and consider the rightmost
  strand. We may move it over the point at $\infty$ in $S^2$ to obtain
  a diagram $D_2$. This isotopy in $S^2$ induces an identification
  between $\OurCx(D_1)$ and $\OurCx(D_2)$.  We then perform a sequence
  of Reidemeister moves in the plane $\R^2$ to get back to $D_1$ from
  $D_2$, producing a map $\OurCx(D_2)\to\OurCx(D_1)$. The composition
  is the required automorphism.  See \Figure{reverse-auto} (the same
  trick was used in \cite{Kho-kh-patterns} to deal with the
  basepoint).  Since the isotopy moves over $\infty\in S^2$ once, the
  checkerboard coloring in \ref{item:BN-total-rank} is reversed, and
  therefore, the induced automorphism of the filtered Bar-Natan
  complex $\fBNCx=\HfOurCx/\{W=0\}$ interchanges the two
  generators. As in the proof of \Lemma{generators-map}, this shows
  that the automorphism on $\fOurCx$ interchanges the two generators
  as well.

  The inequality in \ref{item:s-equal} follows immediately from the
  observation that if $\Filt_{\upright[n]}\fOurCx(K)$ contains cycle
  representatives for $g(o)$ and $g(-o)$, then it contains a cycle
  representative for $g(o)+g(-o)$ as well.

  The proof of \ref{item:s-agrees} takes up most of the work. Let $s$
  denote the Rasmussen invariant. Assume we are working with the
  variant $s^{\upright_{(1)}}_{o}$ (the argument for the other
  variants are similar). To show $s^{\upright_{(1)}}_{o}\geq s$, we
  need to show $\Filt_{\upright_{(1)}[s-2]}\fOurCx$ contains a cycle
  representative for $g(o)$; and to show $s^{\upright_{(1)}}_{o}\leq
  s$, we need to show $\Filt_{\upright_{(1)}[s]}\fOurCx$ does not
  contain a representative for $g(o)$.

  For convenience, let us fix a few more notations. Let
  $\ff=\dd_1+\hh_1$ and $\fg=\dd+\hh-\ff$. Both are endomorphisms on
  the total chain group $\Cx$; neither drops the quantum grading
  $\gr_q$, $\ff$ increases $\gr_h$ by one, while $\fg$ increases it by
  at least two. Furthermore, we have
  $\ff^2=\Commute{\ff}{\fg}=\fg^2=0$. Also, let $\Cx^{\geq i}$ be the
  subgroup of $\Cx$ that lives in homological grading at least $i$,
  and $\Cx^i$ be the subgroup of $\Cx$ that lives in homological
  grading $i$.

  Recall from \ref{item:BN-total-rank}, the $\gr_h$-graded chain
  complex $\fBNCx=(\Cx,\ff)$ has homology of rank two, generated by
  $g_{\BN}(o)$ and $g_{\BN}(-o)$, both supported in $\gr_h$-grading
  zero. Using \ref{item:Ras-s}, choose a cycle representative $c_0$
  for $g_{\BN}(o)$ living in
  $\Cx^0\cap\Filt_{\upright_{(1)}[s-2]}\Cx$. Therefore, we have
  $\ff(c_0)=0$; let $b_0=\fg(c_0)\in\Cx^{\geq 2}$.  Now assume by
  induction that we have defined, for $i=0,\dots,k-1$, chains
  $b_i\in\Cx^{\geq i+2}$ and $c_i\in\Cx^{\geq i}$, so that
  $\fg(c_i)=b_i$, and (for $i\neq 0$) $\ff(c_i)=b_{i-1}$. We will
  extend the construction to $i=k$. Since
  \[
  \ff(b_{k-1})=\ff\fg(c_{k-1})=\fg\ff(c_{k-1})=
  \begin{cases}0&\text{if $k=1$}\\
    \fg(b_{k-2})=\fg^2(c_{k-2})=0&\text{if $k>1$,}\end{cases}
  \]
  and $b_{k-1}$ lives in homological grading at least $k+1$, and the
  entire homology of $(\Cx,\ff)$ is supported in grading zero, there
  is some chain $c_k\in\Cx^{\geq k}$ with $\ff(c_k)=b_{k-1}$; define
  $b_k=\fg(c_k)$. Then $\sum_i c_i$ is a cycle for the chain complex
  $\fOurCx=(\Cx,\ff+\fg)$, and indeed, represents the generator
  $g(o)$. Moreover, by construction, it is supported in
  $\Filt_{\upright_{(1)}[s-2]}\Cx$, and this establishes
  $s^{\upright_{(1)}}_{o}\geq s$.

  For the other direction, assume if possible, $g(o)$ has a cycle
  representative in $\Filt_{\upright_{(1)}[s]}\Cx$. Let $c'_i$ be
  the part of this cycle representative that lives in homological
  grading $i$. Therefore, $\sum_i(c_i+c'_i)$ is a boundary, say
  $(\ff+\fg)(a)$, for some chain $a$; let $a_i$ be the part of $a$
  that lives in $\gr_h=i$, and let $k=\min\set{i}{a_i\neq 0}$. Since
  $(c_0+c'_0)\neq 0$, we have $k\leq -1$. Indeed, we may assume
  $k=-1$. Otherwise, if $k<-1$, then $\ff(a_k)\in\Cx^{k+1}$ is zero,
  and since the homology of $(\Cx,\ff)$ is supported in grading zero,
  there exists a chain $e\in\Cx^{k-1}$ with $\ff(e)=a_k$. Then
  $(a+(\ff+\fg)(e))\in\Cx^{\geq k+1}$ is another chain whose boundary
  is also $\sum_i(c_i+c'_i)$. Therefore, we may assume, $a\in\Cx^{\geq
    -1}$. Then we must have $\ff(a_{-1})=c_0+c'_0$. This implies
  $c'_0$ is also a cycle representative for the generator $g_{\BN}(o)$
  in $H_*(\fBNCx)$. Since $c'_0$ is supported in quantum grading
  $\gr_q\geq s+1$, this is a contradiction, thereby establishing
  $s^{\upright_{(1)}}_{o}\leq s$.

  The statement of \ref{item:s-finite} is an immediate corollary of
  \ref{item:s-agrees}.  Let us assume that we are working with the
  variant $s^{\upright}_{o}(K)$ (the argument for the other variants
  are similar). Since
  \[
  s^{\upright_{\min}}_o(K)\leq s^{\upright}_o(K)\leq s^{\upright_{\max}}_o(K),
  \]
  from \ref{item:s-subset} (where $\upright_{\min}$ and
  $\upright_{\max}$ are defined in \Example{minmax-uprights}), it is enough
  to show that $s^{\upright_{\min}}_o(K)>-\infty$ and
  $s^{\upright_{\max}}_o(K)<\infty$. Let
  $s=s^{\upright_{(1)}}_{o}(K)$ be Rasmussen's $s$-invariant. Then
  from definitions, $\Filt_{\upright_{(1)}[s-2]}\fOurCx(K)$
  contains a representative for $g(o)$, but
  $\Filt_{\upright_{(1)}[s]}\fOurCx(K)$ does not.

  Now fix some knot diagram for $K$, and let $S$ be the finite subset
  of $\Z\times(2\Z+1)$ that supports the bigrading of the Khovanov chain
  complex $\Cx$ for this knot diagram. There exists $n$ sufficiently
  small so that 
  \[
  \upright_{(1)}[s-2]\cap S\subset\upright_{\min}[n],
  \]
  and therefore, $\Filt_{\upright_{\min}[n]}\fOurCx(K)$ contains a
  cycle representative for $g(o)$ as well. Similarly, there exists $m$
  sufficiently large so that
  \[
  \upright_{\max}[m]\cap S\subset\upright_{(1)}[s],
  \]
  and therefore, $\Filt_{\upright_{\max}[m]}\fOurCx(K)$ cannot contain
  a cycle representative for $g(o)$.
\end{proof}

We conclude by observing that each of these new $s$-invariants produce
a lower bound for the four-ball genus.
\begin{proposition}\label{prop:lower-bound}
  For any connected oriented genus-$g$ knot cobordism between any
  knots $K_1$ and $K_2$ in $\R^3\times[0,1]$, and any centered upright
  set $\upright$,
  \[
  g\geq\frac{1}{2}\card{s^{\upright}_*(K_1)-s^{\upright}_*(K_2)}.
  \]
  Therefore, the four-ball genus of any knot $K$ is bounded below
  \[
  g_4(K)\geq\frac{1}{2}\card{s^{\upright}_*(K)}.
  \]
  (Here $s_*^{\upright}$ denotes any one of the three versions defined
  in \Definition{new-s-invariants}.)
\end{proposition}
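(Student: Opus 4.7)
The plan is to associate a chain map $f_\Sigma\from\fOurCx(K_1)\to\fOurCx(K_2)$ to a Morse-theoretic decomposition of the given cobordism $\Sigma$, track its shift on the upright-set filtrations, and use \Lemma{generators-map} to identify its action on the distinguished homology generators $g(\pm o)$.

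First I would write $\Sigma$ as a composition of Reidemeister moves, births, deaths, and saddles; letting $b$, $d$, $s$ denote the number of births, deaths, and saddles, the Euler characteristic $\chi(\Sigma)=-2g$ forces $b+d-s=-2g$. Each Reidemeister map from Propositions~\ref{prop:R-I-inv}--\ref{prop:R-III-inv} is a $(0,0)$-bigraded chain homotopy equivalence and hence preserves $\Filt_{\upright[n]}$; from \Definition{index-0-2}, each birth and each death maps into an $\OurCx\{1\}$-shifted complex and thus raises $\gr_q$ by $1$; from \Definition{index-1}, each saddle maps into an $\OurCx\{-1\}$-shifted complex and thus lowers $\gr_q$ by $1$. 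Composing these elementary maps produces $f_\Sigma$ of total $\gr_q$-shift $b+d-s=-2g$, so
\[
f_\Sigma\bigl(\Filt_{\upright[n]}\fOurCx(K_1)\bigr)\subseteq\Filt_{\upright[n-2g]}\fOurCx(K_2)
\]
for every integer $n$ and every centered upright set $\upright$.

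Next I would track the generators. Let $\pm o_1$ and $\pm o_2$ be the orientations on $K_1$ and $K_2$ such that $o_1$ and $o_2$ (respectively $-o_1$ and $-o_2$) extend to orientations on $\Sigma$. By \Lemma{generators-map}, $f_\Sigma(g(o_1))=g(o_2)$ and $f_\Sigma(g(-o_1))=g(-o_2)$, and by linearity also $f_\Sigma(g(o_1)+g(-o_1))=g(o_2)+g(-o_2)$. Combining this with the filtration shift, any cycle representative for the relevant generator in $\Filt_{\upright[n]}\fOurCx(K_1)$ maps to a cycle representative for the corresponding generator in $\Filt_{\upright[n-2g]}\fOurCx(K_2)$, whence $s^{\upright}_*(K_2)\geq s^{\upright}_*(K_1)-2g$ for each of the three versions. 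Applying the same argument to the reversed cobordism $\bar\Sigma\from K_2\to K_1$ (which swaps births with deaths and thus has the same total shift $-2g$) yields the reverse inequality, giving $\tfrac{1}{2}\card{s^{\upright}_*(K_1)-s^{\upright}_*(K_2)}\leq g$. The four-ball genus bound then follows by specializing to a minimum-genus cobordism from $K$ to the unknot and using $s^{\upright}_*(\text{unknot})=0$ from \Proposition{new-s-properties}~\ref{item:s-invariant}.

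The main obstacle I anticipate is the careful bookkeeping of the quantum-grading shifts of the elementary maps, particularly for the Reidemeister moves: the invariance arguments in \Section{invariance} establish chain-homotopy equivalence via a sequence of cancellations and one must verify that the induced equivalences really are of bidegree $(0,0)$, i.e., that they preserve $\gr_q$ as well as $\gr_h$. Once that bookkeeping is in place, the argument reduces to the standard Rasmussen-type filtered computation; and it is worth noting that the full well-definedness of $f_\Sigma$ in the homotopy category (which the excerpt explicitly does \emph{not} prove) is irrelevant for our purposes, since we only need a single representative arising from one chosen Morse decomposition in order to establish the bound.
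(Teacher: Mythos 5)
Your proposal is correct and follows essentially the same approach as the paper: present the cobordism as a sequence of elementary moves, use the Euler characteristic count $b+d-s=-2g$ to get the nominal quantum-grading shift, invoke \Lemma{generators-map} for the action on $g(\pm o)$, and reverse the cobordism for the other inequality. One small point you could sharpen: the containment $\ol f(\Filt_{\upright[n]})\subseteq\Filt_{\upright[n-2g]}$ after setting $H=W=1$ is not merely a consequence of the nominal $\gr_q$-shift being $-2g$; one needs the $\F_2[H,W]$-equivariance, since a term $(v,y)$ in $\ol f((u,x))$ has coefficient $H^aW^b$ in the shifted $\OurCx(K_2)\{-2g\}$, and because $H,W$ carry bigradings $(0,-2)$ and $(-1,-2)$ with $a,b\geq0$ one concludes both $\gr_h((v,y))\geq\gr_h((u,x))$ and $\gr_q((v,y))\geq\gr_q((u,x))-2g$, which is exactly what the upright-set condition requires.
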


\begin{proof}
  Present the cobordism as a sequence of elementary moves, and
  consider the link-cobordism map \(
  f\from\OurCx(K_1)\to\OurCx(K_2)\{-2g\} \) in $\hocat(\F_2[H,W])$ as
  defined in Propositions~\ref{prop:R-I-inv}--\ref{prop:R-III-inv} and
  Definitions~\ref{def:index-0-2}--\ref{def:index-1}.  (This map might
  depend on how the cobordism is presented, but that turns out to be
  irrelevant.) We will analyze the map $\ol{f}$ induced on
  $\fOurCx=\OurCx/\{H=W=1\}$.

  Let $(u,x)$ and $(v,y)$ be Khovanov generators so that $(v,y)$
  appears in $\ol{f}((u,x))$. Then $(v,y)$ must appear in $f((u,x))$
  with some non-zero coefficient, say $H^aW^b$, for some $a,b\geq
  0$. Therefore, 
  \begin{align*}
    \gr_h((v,y))&=b+\gr_h((H^aW^b(v,y))=b+\gr_h((u,x))\geq\gr_h((u,x)),\text{
      and}\\
    \gr_q((v,y))&=2(a+b)+\gr_q((H^aW^b(v,y))=2(a+b)+\gr_q((u,x))-2g\geq\gr_q((u,x))-2g.
  \end{align*}
  In other words, if $(u,x)$ is contained in any upright set $\upright'$,
  then $(v,y)$ is contained in the translate $\upright'[-2g]$.

  From \Lemma{generators-map}, we know that 
  \[
  \ol{f}(g(o_1))=g(o_2)\qquad \ol{f}(g(-o_1))=g(-o_2),
  \] 
  where $\pm o_1$ and $\pm o_2$ are the orientations on $K_1$ and
  $K_2$, induced from the two orientations of the cobordism.

  Summarizing what we have said so far, for every $n\in\Z$, we have a
  commuting diagram
  \[
  \xymatrix@C=20ex{
    \Filt_{\upright[n]}\fOurCx(K_1)\ar@{^(->}[d]\ar[r]^-{\ol{f}|}&\Filt_{\upright[n-2g]}\fOurCx(K_2)\ar@{^(->}[d]\\
    \fOurCx(K_1)\ar^-{\ol{f}}_-{g(o_1)\mapsto g(o_2),g(-o_1)\mapsto
      g(-o_2)}[r]&\fOurCx(K_2).}
  \]
  Therefore, if $\Filt_{\upright[n]}\fOurCx(K_1)$ contains a cycle
  representative for $g(o_1)$ (respectively, $g(o_1)+g(-o_1)$), then
  $\Filt_{\upright[n-2g]}\fOurCx(K_2)$ contains a cycle representative
  for $g(o_2)$ (respectively, $g(o_2)+g(-o_2)$). Therefore, we get the
  inequality $s^{\upright}_*(K_2)\geq s^{\upright}_*(K_1)-2g$.

  Viewing the cobordism in reverse, we get $s^{\upright}_*(K_1)\geq
  s^{\upright}_*(K_2)-2g$, and combining the two inequalities, we
  reach our desired goal.
\end{proof}

One can easily construct model chain complexes where these new
$s$-invariants are different from Rasmussen's $s$-invariant. The
following are perhaps the simplest of such models (the two models are
duals of one another). Assume that $(\Cx,\dd+\hh)$ contains a direct
summand in one of the following two forms, and assume $b$ is a
representative for the Bar-Natan generator $g_{\BN}(o)$. Therefore,
$b$ lies in bigrading $(0,s-1)$, where $s$ is the Rasmussen invariant.
\[
\xymatrix{
&&a   &&&&  &e&\\
b\ar[urr]|-{\dd_2}&c\ar[r]|-{\dd_1}\ar[ur]|-{\hh_1}&d  &&&&  d\ar[r]|-{\dd_1}\ar[ru]|-{\hh_1}&c&b \\
&e\ar[ur]|-{\hh_1}&  &&&&  a\ar[rru]|-{\dd_2}\ar[ru]|-{\hh_1}&&
}
\]
In the first case, we get that $g(o)$ has a unique cycle
representative $b+c+e$, and therefore, for any centered upright set $\upright$,
\[
s^{\upright}_o=\begin{cases}
s&\text{if $(1,-1)\in\upright$,}\\
s-2&\text{otherwise.}
\end{cases}
\]
In the second case, $g(o)$ has three cycle representatives, $b$, $c$,
or $e$; and therefore, for any centered upright set $\upright$,
\[
s^{\upright}_o=\begin{cases}
s+2&\text{if $(-1,3)\in\upright$,}\\
s&\text{otherwise.}
\end{cases}
\]
Note that in both cases, the $E^3$-page of the Bar-Natan spectral
sequence, induced from the filtered Bar-Natan complex
$\fBNCx=(\Cx,\dd_1+\hh_1)$, contains one of the following two
configurations (the higher differential is a zigzag differential from
\Equation{zigzag}):
\[
\vcenter{\hbox{\begin{tikzpicture}
  \draw[ultra thin] (-1.5,-1.5) grid (1.5,1.5);
  
  \node at (-1,0) {$\bullet$};
  \node (e) at (0,-1) {$\bullet$};
  \node (a) at (1,1) {$\bullet$};
    
  \node[anchor=east] at (-1.5,-1) {$s-3$};
  \node[anchor=east] at (-1.5,0) {$s-1$};
  \node[anchor=east] at (-1.5,1) {$s+1$};

  \node[anchor=north] at (-1,-1.5) {$0$};
  \node[anchor=north] at (0,-1.5) {$1$};
  \node[anchor=north] at (1,-1.5) {$2$};

  \draw[->] (e) -- (a);
\end{tikzpicture}}}
\qquad\qquad\text{or}\qquad\qquad
\vcenter{\hbox{\begin{tikzpicture}
  \draw[ultra thin] (-1.5,-1.5) grid (1.5,1.5);
  
  \node (ap) at (-1,-1) {$\bullet$};
  \node (ep) at (0,1) {$\bullet$};
  \node  at (1,0) {$\bullet$};

  \node[anchor=east] at (-1.5,-1) {$s-3$};
  \node[anchor=east] at (-1.5,0) {$s-1$};
  \node[anchor=east] at (-1.5,1) {$s+1$};

  \node[anchor=north] at (-1,-1.5) {$-2$};
  \node[anchor=north] at (0,-1.5) {$-1$};
  \node[anchor=north] at (1,-1.5) {$0$};

  \draw[->] (ap) -- (ep);
\end{tikzpicture}}}
\]
% \[
% \xymatrix{
% &&\bullet   &&&&  &\bullet&\\
% \bullet&&  &&\text{or}&&  &&\bullet \\
% &\bullet\ar[uur]&  &&&&  \bullet\ar[ruu]&&
% }
% \]
However, the Bar-Natan spectral sequence usually collapses very
quickly. Indeed, for all knots up to $16$ crossings, except the
connect sum of the torus knot $T(3,4)$ with its mirror, the $E^3$-page
of the Bar-Natan spectral sequence does not contain any of the above
two configurations. Therefore, it remains a challenging exercise to
find knots where these new $s$-invariants are different from the
existing one.

\bibliography{double}

\end{document}